\newtheorem{theorem}{Theorem}[section]
\newtheorem{lemma}[theorem]{Lemma}
\newtheorem{corol}[theorem]{Corollary}
\newtheorem{erratum}[theorem]{Erratum}
\def\minim{\mathop{\hbox{minimize}}}
\def\minimize#1{\displaystyle\minim_{#1}}
\newenvironment{remark}%
  {\par\medbreak\refstepcounter{theorem}%
    \noindent\textbf{Remark~\thetheorem. }}%
  {\par\medskip}
\newcommand{\vz}[1]{\ensuremath{\mathbb{#1}}}
\newcommand{\R}{{\vz R}}
\newcommand{\dvg}{\text{div}\,}
\DeclareMathOperator{\supp}{supp}
\def\pref#1{(\ref{#1})}
\long\def\drop#1{}
\let\e\varepsilon
\let\epsilon\varepsilon
\def\XXint#1#2#3{{\setbox0=\hbox{$#1{#2#3}{\int}$}
     \vcenter{\hbox{$#2#3$}}\kern-.5\wd0}}
\DeclareMathAlphabet{\mathpzc}{OT1}{pzc}{m}{it}
\title{Anisotropic Total Variation Regularized $L^1$-Approximation and Denoising/Deblurring of 2D Bar Codes
}
\author{Rustum Choksi\footnote{Department of Mathematics and Statistics, McGill University, Burnside Hall, Room 1005, 805 Sherbrooke Street West, Montreal, Quebec, H3A 2K6, Canada}\qquad  Yves van Gennip\footnote{Department of Mathematics, University of California Los Angeles, 520 Portola Plaza, Math Sciences Building 6363, Los Angeles, California, 90095, USA}\qquad  Adam Oberman\footnote{Department of Mathematics, Simon Fraser University, 
Burnaby, British Columbia, 8888 University Drive, V5A 1S6, Canada}}
\begin{document}

\maketitle

\begin{abstract}
We consider variations of the Rudin-Osher-Fatemi functional which are particularly well-suited to denoising and deblurring of 2D bar codes. These functionals consist of an anisotropic total variation favoring rectangles and a fidelity term which measure  the $L^1$ distance to the signal, both with and without the presence of  a deconvolution operator.  
Based upon the existence of a certain associated vector field, 
we find necessary and sufficient conditions for a function to be a  minimizer. We apply these results to 2D bar codes to find explicit regimes --  in terms of  the fidelity parameter and smallest length scale of the bar codes -- for which the perfect bar code is attained  via minimization of the functionals. 
Via a discretization   reformulated as a linear program, we perform numerical experiments for all functionals demonstrating their denoising and deblurring capabilities. 

\medskip
\textbf{Key words:} anisotropic total variation, $L^1$-approximation, 2D bar code, denoising, deblurring 

\medskip
\textbf{MSC2010:} 49N45, 94A08

\medskip
\end{abstract}

\section{Introduction}
In this article we study the application of total variation-based energy minimization for denoising and deblurring of 2D bar codes. 
A 2D bar code is a collection of non-overlapping black squares, the lengths of whose sides are all bounded below by some value $\omega$, placed on a white backdrop. Analogous to the terminology for 1D bar codes, we call the lower bound $\omega$ the $X$-dimension of the bar code.  
Examples include stacked and matrix 2D bar codes illustrated in Figure 1 
(see \cite{Palmer07} for a thorough description of  2D bar code symbologies).  
When these bar codes are scanned,  the resulting signal will be a blurred and noisy version of the original bar code. Efficient and robust techniques to recover the original bar code are needed to retrieve the information from the code.

The problem of denoising and deblurring images via variational methods has received much attention in the literature since the introduction of the Rudin-Osher-Fatemi (ROF) functional in \cite{RudinOsherFatemi92-2}. This functional is the sum of a so called fidelity term, which measures the $L^2$ distance between the argument of the functional $u$ and the given (measured) signal $f$, and the total variation of $u$, which acts as a regularization term. In this paper we will study variations of this functional that take into account the \emph{a priori} knowledge that the original image which we want to recover is a 2D bar code.

\begin{figure} 
\centerline{ {\includegraphics[viewport = 20 30 430 420, clip, width=1.7in]{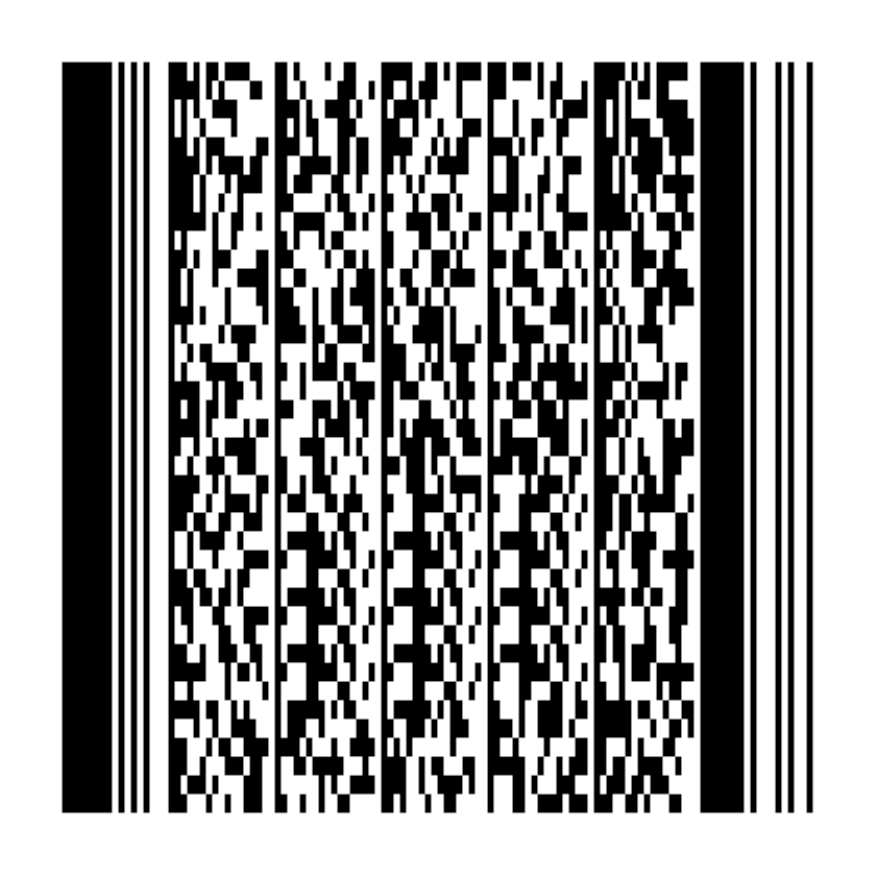}} \qquad {\includegraphics[width=1.5in]{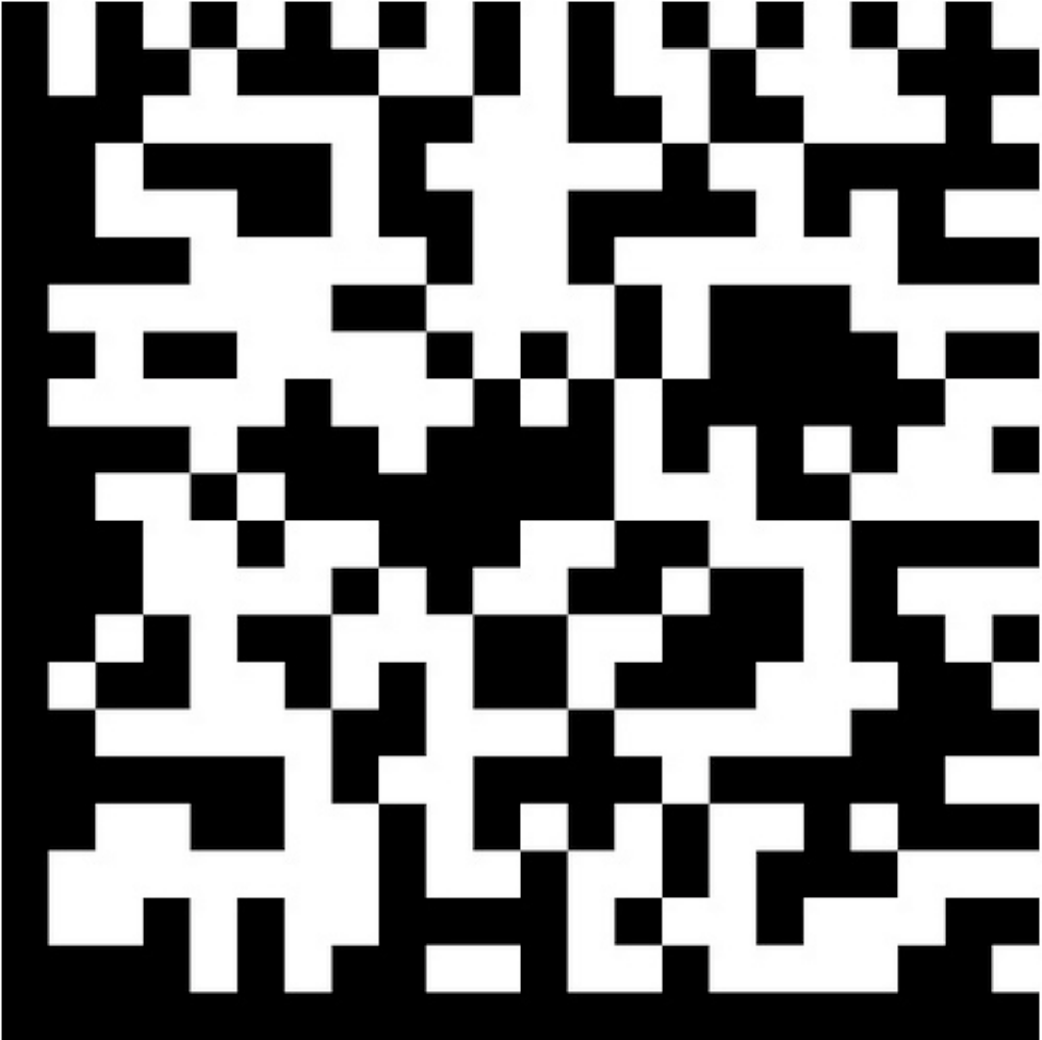}} \qquad \,\,\,\,{\includegraphics[viewport = 45 55 430 420,  width=1.47in]{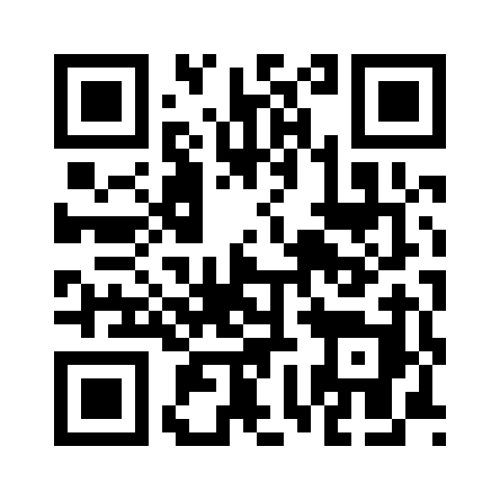}}} 
\caption{Examples of 2D bar codes: on the left is a {\it stacked} bar code while the other two are examples of {\it matrix} barcodes. The matrix code on the right is an example of a {\it QR (Quick Response)} barcode designed to be {\it readable}  with  camera-equipted smart phones.}
\label{fig:1}
\end{figure}

Here we consider three functionals based upon  a  particular anisotropic total variation well suited to 2D barcodes. It is defined for all $u \in BV(\R^2)$ (cf. \pref{TV}) as 
\begin{equation}\label{eq:anisotv}
\int_{\R^2} |u_x| + |u_y| := \sup \left\{ \int_{\R^2} u \,\text{div} v : v\in C_c^1(\R^2; \R^2), \forall x \, \, |v(x)|_\infty\leq 1\right\}, 
\end{equation}
where for a vector $v(x)= (v_1(x), v_2(x)) \in \R^2$, the norm $|\cdot|_\infty$ is defined by 
\[ |v(x)|_\infty \, : =\, \max \{ |v_1(x)|, |v_2(x)| \}. \]
This notation should not  be confused with the $L^\infty$ norm of a vector field $v\in L^\infty(\R^2; \R^2)$, denoted by 
$\|v\|_\infty$, which is the supremum of $|v(x)|$ over all $x\in \R^2$ where $|\cdot|$ denotes the standard Euclidean norm. 

Let $f \in L^1 (\R^2)$ denote an observed signal and $\lambda \ge 0$ be the so-called fidelity parameter. We consider the following functionals.
\begin{itemize}
\item \emph{Denoising}:
\[
F_1(u) := 
\int_{\R^2} |u_x| + |u_y| + \lambda \|u-f\|_{L^1(\R^2)}   \quad 
\hbox{\rm defined for }  u\in BV(\R^2). 
\]
Minimizing this functional includes no deblurring effects but the regularizing effect from the anisotropic total variation will lead to denoising.
\item \emph{Denoising} and slight \emph{deblurring}:
\[
\overline{F_1}(u) := 
\int_{\R^2} |u_x| + |u_y| + \lambda \|u-f\|_{L^1(\R^2)}  \quad  
\hbox{\rm defined for } u\in BV(\R^2; \{0, 1\}). 
\]
Note here the smaller domain of  binary functions which entails a  very crude attempt at deblurring. 
Whereas the  functional is no longer convex over its domain,  it has a convenient convex reformulation (cf. Lemma \ref{lem:convexify}) 
via the following functional: 
\[ 
{F_2} (v): =  \int_{\R^2} |v_x| + |v_y| \,\, + \,\, \lambda \int_{\R^2} 
\left ( | 1 - f| \, -\,  |f|\right) \, v 
 \quad 
\hbox{\rm defined for } v\in BV(\R^2; [0,1]). 
\]
\item \emph{Deblurring} and \emph{denoising}:
\[
F_3(u) := 
\int_{\R^2} |u_x| + |u_y| + \lambda \|Ku-f\|_{L^1(\R^2)} \quad 
\hbox{\rm defined for } u\in BV(\R^2). \]
Here
 \begin{equation}\label{eq:whatisK}
K: L^1(\R^2) \to L^1(\R^2) \text{ is a positive normalized}\footnote{i.e. $\int_{\R^2} Ku = \int_{\R^2} u$}\text{ bounded linear operator},
\end{equation}
e.g. convolution with a suitable blurring kernel which is commonly referred to ({\it cf.} \cite{ChanShen05}) as the  PSF ({\it point spread function}).
Our main interest here is  when  $f$ is of the form $f=Ku_0$, where $u_0 \in BV(\R^2)$, and in particular where $u_0$ is the characteristic function of a bar code (we could hence consider the larger class of operators $K$ defined on $BV(\R^2)$).  
The linear operator $K$ models the blurring of the bar code signal and  deblurring is introduced by the action of $K$ on $u$ prior to  comparison with  $f$. 


\end{itemize}
These functionals are variations of the original ROF functional with the following modifications:  
(i) The standard isotropic total variation of a characteristic function $u\in BV(\R^2, \{0,1\})$ gives the length of the perimeter of the set $\{u=1\}$. Using this as regularization term will lead to a rounding off of corners in the end result, because a rounded off corner has less interface than a sharp one. Our aim is to recover bar codes with sharp corners and hence we use this particular  anisotropic total variation in $F_i$ whose  corresponding Wulff shape (e.g. \cite{Taylor78}, \cite{EsedogluOsher04}) is a square. The anisotropic total variation\footnote{
Our choice of an anisotropic total variation does have a significant drawback: with the anisotropy along the coordinate axes, it assumes that the measured bar code is aligned with the coordinate axes (see also the definition of bar code in Section~\ref{sec:Q2}). In practice this assumption does not necessarily hold and the bar code might be rotated or even seen from a skewed perspective. Either a preprocessing step which aligns the bar code with the axes or the use of a rotated form of the anisotropic total variation might be in order (e.g. \cite{BerkelsBurgerDroskeNemitzRumpf06, ChuYangChen07b, XuMcCloskey11}).}
 we use gives, for a characteristic function $u$, the sum of the lengths of the projections of the perimeter of $\{u=1\}$ onto the coordinate axes. Moreover, it allows one to reformulate the discretized minimization problems in the form of linear programs which can be computationally solved  quickly and efficiently (see Section \ref{Numerical-Implementation}).  (ii) For the fidelity term we use the  $L^1$ distance.  As addressed in \cite{ChanEsedoglu05} the use of an $L^2$ fidelity term, as in the ROF functional, leads to a loss of contrast when minimizing over all of $BV(\R^2)$. 

From the point of view of image processing, the usefulness of this variational approach lies in the ability to denoise and deblur signals via minimization of the functionals. This has the potential to work well if the functionals are in fact faithful to the underlying images sought in the sense that if we input a perfect signal,  minimization  of the appropriate functional  will indeed yield back the perfect signal. 
We will say a functional $F_i$, $i \in \{1,2\}$ is \textit{faithful} to a 
signal $f$ {\it if} there exists some explicit regime for $\lambda$ such that the signal $f$ is the unique minimizer of $F_i$. We say  $F_3$ is {faithful} to a signal $u_0$ {\it if} there exists some explicit regime for $\lambda$ such that $u_0$ is the unique minimizer of $F_3$ with $f = Ku_0$. 
We thus focus on the following four  questions:
\begin{enumerate}
\item If the parameter $\lambda$ is chosen too small 
the lack of enforced fidelity to the measured signal will lead to the trivial minimizer $u=0$. For which values of $\lambda$ is this the case?

\item\label{item:whenbarcode} Are the functionals faithful to 
a clean 2D bar code and what are the associated values of 
 $\lambda$? 
How do these threshold values for $\lambda$ depend on the $X$-dimension of the bar code and the properties of the blurring operator $K$?

\item\label{item:onlybarcode} Are the $F_i$ faithful to other binary signals? 
 This is particularly relevant to judge the denoising properties of our functionals: if only clean 2D bar code signals are returned unchanged by minimization, this is an indication that noisy bar code signals will be denoised.
\begin{erratum}\label{err:onlyfaithful}
Earlier versions of this paper, including the version published in {\em Inverse Problems and Imaging} 5(3), 2010, pp. 591-617, erroneously argued that the only binary signals $F_1$ are $F_3$ are faithful to are clean 2D bar codes. A mistake in our argument, and in fact a counterexample for $F_1$, was pointed out to us by Matthias R\"oger and Nils Dabrock from the Technische Universit\"at Dortmund in April 2016. Starting from version arXiv:1007.1035v3 of this paper, Sections~\ref{sec:Q2} and~\ref{sec:minF3} have the incorrect results removed and instead contain errata explaining the mistake.
\end{erratum}

\item\label{item:numerics} What do numerical simulations for minimization of these functionals yield? Particularly,  how do these minimization algorithms perform in the presence of noise?

\end{enumerate} 

Question 1 is answered in Lemma \ref{trivial}. The basis for answering question
lies in the existence of a certain vector field  
(cf.  the definition of ${\mathcal V}$ in  \pref{eq:setofvfields}). Following an argument initially presented in \cite{Meyer01}, and elaborated on in  \cite{ChanEsedoglu05}, we find that a sufficient condition  for $u_0$ to be a minimizer is the existence of a vector field $v \in {\mathcal V}(u_0)$ (cf. Theorem~\ref{thm:minimizervectorfield}). Arguments from convex analysis (\cite{EkelandTemam76}, \cite{Ring00}) show that the existence of such a vector field is  also a  necessary condition (cf. Lemma \ref{lem:minimimpliesvectorf}). We then use the sufficiency,  and a particular  vector field construction, to show that  if $\lambda > \frac4{\omega}$,  a barcode $z$ is the unique minimizer of $F_1$ and $F_2$ with $f = z$ (cf. Corollaries~\ref{cor:leavebar codealone} and \ref{F2-barcode}). If $K$ represents convolution with any positive kernel (PSF) of unit mass, the same condition on $\lambda$ insures that $z$ is the unique minimizer of $F_3$ 
(cf. first part of Theorem~\ref{barcode-conv}). 
Question 4 is discussed in Sections~\ref{Numerical-Implementation} and~\ref{sec:numerical}.

Because of the importance these vector fields play in our analysis we introduce special notation for them (compare with the \emph{extremal pairs} in \cite[Section 1.14, Proposition 5]{Meyer01}). For a fixed $u\in BV(\R^2)$  we define 
\begin{equation}\label{eq:setofvfields}
\mathcal{V}(u) := \left\{v\in L^\infty(\R^2; \R^2): \text{ the three conditions below are satisfied}\right\},
\end{equation}
where the three conditions are
\begin{enumerate}
	\item \label{item:condition1} $|v(x)|_\infty \leq 1$ for almost all $x\in \R^2$,
	\item \label{item:condition2} $\dvg v \in L^\infty(\R^2)$,
	\item \label{item:condition3} $-\int_{\R^2} u\, \dvg v = \int_{\R^2} |u_x| + |u_y|$,
\end{enumerate}

\bigskip

There is an increasingly large literature on the analysis of these types of functionals ({\it cf.} \cite{ChanShen05}) and indeed, our work is highly guided by similar work of Chan,  Esedo\=glu,  Meyer, Osher,  Ring, and others \cite{RudinOsherFatemi92-2, Meyer01, EsedogluOsher04,ChanEsedoglu05, Ring00}. We are unaware of any analysis on this particular combination of $L^1$ fidelity and anisotropic total variation.

\section{Trivial minimizer}
We first state an elementary lemma involving the operator $K$. Its proof easily follows by decomposing $u$ into its positive and negative parts and using the triangle inequality. 
\begin{lemma}\label{lem:K}
Let $u\in L^1(\R^2)$, $K$ be as in (\ref{eq:whatisK}), 
and $\Omega\subset \R^2$ be open, 
then
\[
\left| \int_{\Omega} u \right| \leq \int_{\Omega} |Ku| \leq \int_{\Omega} |u|.
\]
\end{lemma}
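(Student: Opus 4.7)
The plan is to base everything on the decomposition $u = u^+ - u^-$ into non-negative parts $u^\pm := \max(\pm u, 0)$, so that $|u| = u^+ + u^-$ and $u^\pm \in L^1(\R^2)$. By positivity of $K$, both $Ku^+$ and $Ku^-$ are non-negative a.e.\ on $\R^2$, and by linearity $Ku = Ku^+ - Ku^-$.

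The central pointwise estimate is then the triangle inequality
\[
|Ku(x)| \;=\; |Ku^+(x) - Ku^-(x)| \;\le\; Ku^+(x) + Ku^-(x) \;=\; K|u|(x) \qquad \text{a.e. on } \R^2,
\]
which is immediate from the non-negativity of $Ku^\pm$. Integrating this bound over $\Omega$ yields $\int_\Omega |Ku| \le \int_\Omega K|u|$, and combining with the normalization property applied to the non-negative function $|u|$ will then deliver the right-hand inequality $\int_\Omega |Ku| \le \int_\Omega |u|$. For the left-hand inequality, I would use the elementary bound $\int_\Omega |Ku| \ge \bigl|\int_\Omega Ku\bigr|$ (triangle inequality for integrals) together with a normalization-type identity relating $\int_\Omega Ku$ to $\int_\Omega u$ via $u^\pm$, to conclude $\bigl|\int_\Omega u\bigr| \le \int_\Omega |Ku|$.

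The main obstacle — and the reason the hint emphasizes the positive/negative decomposition — is tracking the interplay between the globally stated normalization $\int_{\R^2} Kw = \int_{\R^2} w$ and integrals over the proper subset $\Omega$. Once the pointwise bound $|Ku| \le K|u|$ is in hand, the remainder of the argument is careful bookkeeping of positivity, linearity, and normalization, rather than any deeper analytic step. I expect the cleanest write-up to apply the decomposition + triangle inequality chain to each inequality separately, so that each side of the stated chain is handled by one such elementary step.
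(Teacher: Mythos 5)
Your decomposition $u=u^+-u^-$ and the pointwise estimate $|Ku|\le Ku^++Ku^-=K|u|$ are exactly the ingredients the paper has in mind (its entire proof is the one--sentence remark that the lemma ``easily follows by decomposing $u$ into its positive and negative parts and using the triangle inequality''), and that part of your argument is correct. The genuine gap sits precisely at the step you defer to ``careful bookkeeping'': the normalization hypothesis is the single \emph{global} identity $\int_{\R^2}Kw=\int_{\R^2}w$, and it gives no control of $\int_\Omega K|u|$ against $\int_\Omega|u|$, nor any identity relating $\int_\Omega Ku$ to $\int_\Omega u$, when $\Omega$ is a proper open subset. From your pointwise bound plus normalization you only obtain $\int_\Omega|Ku|\le\int_{\R^2}K|u|=\int_{\R^2}|u|$, which is weaker than the asserted $\int_\Omega|Ku|\le\int_\Omega|u|$; and $\bigl|\int_\Omega Ku\bigr|$ need not dominate $\bigl|\int_\Omega u\bigr|$. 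Indeed, for $K$ given by convolution with a nonnegative unit-mass kernel (the paper's main example) both localized inequalities can fail: if $u\ge0$ is supported just outside $\Omega$ and the kernel pushes mass into $\Omega$, then $\int_\Omega|Ku|>0=\int_\Omega|u|$; if $u\ge0$ is supported inside $\Omega$ and mass leaks out, then $\int_\Omega|Ku|<\int_\Omega u=\bigl|\int_\Omega u\bigr|$.

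So your plan actually proves the lemma only for $\Omega=\R^2$, where both inequalities follow at once from $\bigl|\int_{\R^2}u\bigr|=\bigl|\int_{\R^2}Ku\bigr|\le\int_{\R^2}|Ku|\le\int_{\R^2}K|u|=\int_{\R^2}|u|$. To be fair, the localized statement does not follow from the stated hypotheses on $K$ at all, and the paper's own one-line justification glosses over the same point; but as written, the proposal's final step is not routine bookkeeping and cannot be completed for general open $\Omega$, so you should either restrict to $\Omega=\R^2$ or add a hypothesis (e.g.\ that $K$ does not move mass across $\partial\Omega$) that actually licenses the localization.
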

\begin{lemma}\label{trivial}
Let there be an $R>0$ such that $\supp\, f \subset B(0;R)$ and define $\lambda_0 := \frac{C}{R\sqrt{\pi}}$ where $C$ is the isoperimetric constant from Lemma~\ref{lem:isoperimetric}. If $0\leq \lambda < \lambda_0$ then $u=0$ is the unique minimizer of $F_1$ and $\overline{F_1}$. If in addition $\ker K = \{0\}$ then $u=0$ is the unique minimizer of $F_3$ as well.
\end{lemma}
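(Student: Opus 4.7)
The plan is to show that $F_i(u) > F_i(0) = \lambda \|f\|_{L^1(\R^2)}$ for every admissible $u \neq 0$, by combining a localized reverse triangle inequality on the fidelity term with an isoperimetric-type lower bound on the anisotropic total variation. Exploiting $\supp f \subset B(0;R)$ together with the pointwise inequality $|u-f| \geq |f| - |u|$, I would split the fidelity integral to obtain
\[
\|u-f\|_{L^1} - \|f\|_{L^1} \;\geq\; -\int_{B(0;R)} |u| + \int_{B(0;R)^c} |u|,
\]
with the analogous estimate after replacing $u$ by $Ku$ in the $F_3$ case; Lemma~\ref{lem:K} applied with $\Omega = B(0;R)$ then re-expresses $\int_{B(0;R)} |Ku|$ in terms of $\int_{B(0;R)} |u|$.

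The core analytic step is the inequality
\[
\int_{\R^2} |u_x| + |u_y| \;\geq\; \lambda_0 \int_{B(0;R)} |u|
\]
for every $u \in BV(\R^2)$, with $\lambda_0 = C/(R\sqrt{\pi})$. I would prove it by first replacing $u$ by $|u|$ (which does not increase the anisotropic TV), then applying the coarea formula to rewrite it as $\int_0^\infty P(\{|u|>t\})\,dt$ for $P$ the anisotropic perimeter, then using Lemma~\ref{lem:isoperimetric} in the form $P(E) \geq C |E|^{1/2}$ on each level set, and finally the elementary bound
\[
|E|^{1/2} \;\geq\; |E \cap B(0;R)|/(R\sqrt{\pi})
\]
valid for every measurable $E \subset \R^2$, which follows by a two-case split depending on whether $|E| \leq \pi R^2$ or $|E| > \pi R^2$, using $|E \cap B(0;R)| \leq \pi R^2$ in the latter case.

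Combining these ingredients for $\lambda < \lambda_0$ gives
\[
F_1(u) - F_1(0) \;\geq\; (\lambda_0 - \lambda) \int_{B(0;R)} |u| + \lambda \int_{B(0;R)^c} |u|,
\]
which is strictly positive for any $u \neq 0$; the same bound restricted to $u \in BV(\R^2;\{0,1\})$ handles $\overline{F_1}$. For $F_3$ the same chain yields
\[
F_3(u) - F_3(0) \;\geq\; (\lambda_0 - \lambda) \int_{B(0;R)} |u| + \lambda \int_{B(0;R)^c} |Ku|,
\]
and strict positivity for $u \neq 0$ follows by case analysis: either $\int_{B(0;R)} |u| > 0$, or $u$ is supported in $B(0;R)^c$ with $u \neq 0$, in which case $\ker K = \{0\}$ gives $Ku \neq 0$ and, combined with $\int_{B(0;R)} |Ku| = 0$ from Lemma~\ref{lem:K}, forces $\int_{B(0;R)^c} |Ku| = \|Ku\|_{L^1} > 0$; the $\lambda = 0$ subcase is immediate because $\int_{\R^2} |u_x| + |u_y| > 0$ whenever $u \in L^1(\R^2) \cap BV(\R^2)$ is non-zero.

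I expect the isoperimetric step to be the main obstacle: a direct Sobolev-plus-H\"older estimate would only cover the case where $u$ is itself supported in $B(0;R)$, so the level-set analysis via coarea together with the case-split on $|\{|u|>t\}|$ are essential in order to handle arbitrary BV functions; the rest of the argument is bookkeeping with triangle inequalities.
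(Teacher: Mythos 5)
Your proof is correct, and its overall skeleton matches the paper's: split the fidelity term over $B(0;R)$ and its complement, use Lemma~\ref{lem:K} to pass from $Ku$ to $u$ on the ball, and beat the fidelity loss with the lower bound $\int_{\R^2}|u_x|+|u_y| \geq \lambda_0 \int_{B(0;R)}|u|$. Where you differ is in how that central inequality is obtained. The paper gets it in one line: Lemma~\ref{lem:isoperimetric} gives $\int_{\R^2}|u_x|+|u_y| \geq C\|u\|_{L^2(\R^2)} \geq C\|u\|_{L^2(B(0;R))}$, and then Cauchy--Schwarz on the ball (of measure $\pi R^2$) gives $\|u\|_{L^2(B(0;R))} \geq \frac{1}{R\sqrt{\pi}}\int_{B(0;R)}|u|$. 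Note that this works for \emph{arbitrary} $u \in BV(\R^2)$, not just $u$ supported in the ball, because one simply restricts the $L^2$ norm to $B(0;R)$ before applying H\"older; so the "main obstacle" you anticipated is not actually there, and your detour through $|u|$, the coarea formula (Lemma~\ref{lem:coarea}), the per-level-set isoperimetric bound, and the geometric inequality $|E|^{1/2} \geq |E\cap B(0;R)|/(R\sqrt{\pi})$ is valid but unnecessary; it also quietly uses the fact that replacing $u$ by $|u|$ does not increase the anisotropic total variation, which is true but requires a small extra argument (e.g.\ via Lemmas~\ref{lem:approximateBV} and~\ref{lem:TVlsc}) that the paper never needs. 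A second, cosmetic difference: you prove $F_i(u) > F_i(0)$ directly for all $u \neq 0$, whereas the paper assumes $u$ is a minimizer, compares with $F_3(0)$, and concludes $\int_{\R^2}|Ku|=0$; your version has the minor advantage of not presupposing anything about minimizers and of making the $\lambda=0$ and $\supp u \subset B(0;R)^c$ cases explicit, which the paper glosses over. Your appeal to $\int_{B(0;R)}|Ku| \leq \int_{B(0;R)}|u|$ is exactly the paper's own use of Lemma~\ref{lem:K}, so no discrepancy arises there.
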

\begin{proof}
The proof is essentially the same as that of \cite[Proposition 5.7]{ChanEsedoglu05}. 
Since it is brief, we present it for $F_3$. Let $u$ be a minimizer of $F_3$ and hence $F_3(u) \leq F_3(0)$. 
By Lemma~\ref{lem:isoperimetric} we find
\begin{align*}
&\hspace{0.5cm}C \left(\int_{B(0;R)} u^2\right)^\frac12 + \lambda \int_{B(0;R)} |Ku-f| +  \lambda \int_{B(0;R)^c}|Ku|\\
&\leq C \left(\int_{\R^2} u^2\right)^\frac12 + \lambda \int_{\R^2} |Ku-f|
\leq \int_{\R^2} |u_x| + |u_y| + \lambda \int_{\R^2} |Ku-f|\\
&\leq F_3(0)
=  \lambda \int_{\R^2} |f|
= \lambda \int_{B(0;R)} |f|
\leq \lambda \int_{B(0;R)} |Ku-f| + \lambda \int_{B(0;R)} |Ku|.
\end{align*}
Lemma~\ref{lem:K} and H\"older's inequality tell us that
\[
\int_{B(0;R)} |Ku| \leq \int_{B(0;R)} |u| \leq R \sqrt{\pi} \left(\int_{B(0;R)} u^2\right)^\frac12.
\]
Applying this to the left hand side of the calculation above we get
\[
(\lambda_0 - \lambda) \int_{B(0;R)} |Ku| +  \lambda \int_{B(0;R)^c}|Ku| \leq 0.
\]
Since $\lambda < \lambda_0$,  $\int_{\R^2} |Ku| = 0$.
\end{proof}

\section{Minimizers of $F_1$}
We first explore the consequence of the simple property in  convex analysis that  (cf. \cite{EkelandTemam76})  for a convex functional $F$ defined over a topological vector space $V$,  
\[ u_0  \,\, \hbox{\rm is a minimizer of   } F  \hbox{ \rm over }  V \quad    \iff\quad 0 \in \partial F(u_0), \]
where the subdifferential of $F$ at $u_0$ ($F(u_0) < \infty$)  is defined by  
\[ \partial F(u_0) := \left\{ u^\ast \in V^\ast : \,\, 
\forall \, v \in  V, \,\, {_{V^*}}\langle u^*, v - u_0 \rangle_V \,\, + \,\, F(u_0) \, \le \, F(v) \right\}. \]
Here $V^\ast$ denotes the topological dual space with pairing ${_{V^*}}\langle \cdot, \cdot \rangle_V$.

There are some subtleties involved in choosing the right space $V$ to define our functional(s) on. Our choice here is to  take $V = BV(\R^2)$. This has the advantage that it allows for some basic subdifferential calculus but  forces us to work with the dual space $BV(\R^2)^\ast$ and its associated pairing with $BV(\R^2)$ which both lack a simple general explicit description. However, we only need this dual space structure in a specific case in which we are able to give an explicit description of the dual element and its action on smooth functions in $BV(\R^2)$, see (\ref{eq:dualpairingpsi}).  An alternative approach would be to take $V=L^2(\R^2)$ after extending all our functionals to $L^2(\R^2)$ by setting their value to be $+\infty$ on $L^2(\R^2) \backslash BV(\R^2)$. For the anisotropic total variation by itself, this would work well and for example, its subgradient  over $V=L^2(\R^2)$ was calculated in \cite[Theorem 12]{Moll05}. 
However, the $L^1$ fidelity term then lacks the necessary continuity properties to be treated separately and we would need to compute the subgradient of the functional as a whole. Defining the functionals over $L^1(\R^2)$ would solve this issue, but would still leave us with a complication in the computation of the subgradient of the anisotropic total variation because the gradient operator is not continuous on $L^1(\R^2)$.


We introduce some notation.  Let $\mathcal{M}_2(\R^2)$ denote the set of all vector valued Radon measures on $\R^2$ with two components. Denote the transpose of the gradient operator
\[
\nabla: BV(\R^2) \to \mathcal{M}_2(\R^2), u \mapsto (\partial_1 u, \partial_2 u)
\]
by
\[
\nabla^*: \mathcal{M}_2(\R^2)^* \to BV(\R^2)^*.
\]
Note that since $L^1(\R^2; \R^2)\subset \mathcal{M}_2(\R^2)$ we have $\mathcal{M}_2(\R^2)^* \subset L^\infty(\R^2; \R^2)$. For a $v\in \mathcal{M}_2(\R^2)^*$ and $\mu \in \mathcal{M}_2(\R^2)$ we can write the coupling as $\int_{\R^2} v\,d\mu$. In particular if $\mu$ is absolutely continuous with respect to the Lebesgue measure we can identify $\mu$ via the Radon-Nikodym derivative with a function in $L^1(\R^2; \R^2)$ which we again denote by $\mu$. In that case the coupling with $v$ can be written as $\int_{\R^2} v\cdot \mu \,d\mathcal{L}^2$ (we will leave $d\mathcal{L}^2$ out where there is no confusion).


Furthermore we introduce
\[
\|\cdot\|_a: \mathcal{M}_2(\R^2) \to \R, \mu \mapsto \sup\left\{ \int_{\R^2} v_1 d\mu_1 + \int_{\R^2} v_2 d\mu_2: v\in C_c^\infty(\R^2),  \forall x, \, |v(x)|_\infty \leq 1\right\}
\]
(where we interpret $\int_{\R^2} v_i \, d\partial_i u = - \int_{\R^2} u \partial_i v_i \,dx$ for $u\in BV(\R^2)$). Note that
\[
\|\nabla u\|_a = \int_{\R^2} |u_x| + |u_y|.
\]

\begin{lemma}\label{lem:minimimpliesvectorf}
If $u_0\in BV(\R^2)$ is a minimizer of $F_1$ over $BV(\R^2)$, then there exists a vector field $v\in \mathcal{V}(u_0)$ and $\lambda \geq \|\dvg v\|_{L^\infty(\R^2)}$.
\end{lemma}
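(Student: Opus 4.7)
The plan is to apply convex subdifferential calculus in $V = BV(\R^2)$ to the optimality condition $0 \in \partial F_1(u_0)$, split $F_1$ into its total variation part and its fidelity part, and read off the vector field $v$ from the element of $\partial G(u_0)$ produced by this splitting.

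First, write $F_1 = G + H$ where $G(u) := \|\nabla u\|_a$ and $H(u) := \lambda \|u-f\|_{L^1(\R^2)}$. Both are convex on $BV(\R^2)$, and $H$ is continuous in the $BV$-norm (it is Lipschitz with constant $\lambda$ via the embedding $BV(\R^2) \hookrightarrow L^1_{\mathrm{loc}}(\R^2)$), so the Moreau--Rockafellar sum rule applies and gives $0 = \xi + \eta$ for some $\xi \in \partial G(u_0)$ and $\eta \in \partial H(u_0)$. The subdifferential $\partial H(u_0)$, interpreted through the pairing (\ref{eq:dualpairingpsi}), is represented by functions $g \in L^\infty(\R^2)$ with $\|g\|_\infty \le 1$ and $g = \operatorname{sign}(u_0 - f)$ a.e.\ on $\{u_0 \ne f\}$; in particular $\eta = \lambda g$ with $\|\eta\|_\infty \le \lambda$.

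Next, decompose $G = \|\cdot\|_a \circ \nabla$ and apply the chain rule for subdifferentials of convex functions composed with a bounded linear operator, giving $\partial G(u_0) = \nabla^*\big(\partial \|\cdot\|_a(\nabla u_0)\big)$. The convex conjugate characterization of the norm $\|\cdot\|_a$ shows that $\partial\|\cdot\|_a(\nabla u_0)$ is the set of $v \in \mathcal{M}_2(\R^2)^* \subset L^\infty(\R^2;\R^2)$ with $|v(x)|_\infty \le 1$ a.e.\ and $\int_{\R^2} v\, d(\nabla u_0) = \|\nabla u_0\|_a$. Writing the latter pairing as $-\int_{\R^2} u_0 \dvg v$ on smooth components recovers condition 3 in the definition of $\mathcal{V}(u_0)$; condition 1 is already built in. This produces the candidate $v$; we have $\xi = \nabla^* v$.

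Finally, test $\xi + \eta = 0$ against arbitrary $\phi \in C_c^\infty(\R^2) \subset BV(\R^2)$: by definition of $\nabla^*$ and the explicit pairing,
\[
-\int_{\R^2} \phi \,\dvg v = \langle \xi, \phi\rangle = -\langle \eta,\phi\rangle = -\lambda \int_{\R^2} g\,\phi.
\]
Hence $\dvg v = \lambda g$ in $\mathcal{D}'(\R^2)$, so $\dvg v \in L^\infty(\R^2)$ (condition 2) and $\|\dvg v\|_{L^\infty(\R^2)} \le \lambda\|g\|_{L^\infty(\R^2)} \le \lambda$, as claimed.

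The step I expect to be the main obstacle is the rigorous justification of the chain rule $\partial G(u_0) = \nabla^*(\partial\|\cdot\|_a(\nabla u_0))$ together with the identification of the abstract dual element $\eta \in BV(\R^2)^*$ with an honest $L^\infty$ function; both involve navigating $BV(\R^2)^*$, whose general structure is opaque. The paper's approach of only needing the pairing on smooth $\phi$ (via (\ref{eq:dualpairingpsi})) is exactly the device that makes the final identification of $\dvg v$ with $\lambda g$ in the distributional sense work.
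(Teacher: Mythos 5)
Your outline follows the same route as the paper (sum rule for $\partial F_1(u_0)$, chain rule $\partial(\|\cdot\|_a\circ\nabla)(u_0)=\nabla^*\partial\|\cdot\|_a(\nabla u_0)$, read off $v$, then identify $\dvg v$ with $\lambda$ times the fidelity subgradient), but it has a genuine gap exactly at the point you yourself flag and then do not resolve: the assertion that the element $\eta\in\partial H(u_0)$ handed to you by the sum rule ``is represented by functions $g\in L^\infty(\R^2)$ with $\|g\|_\infty\le 1$ and $g=\operatorname{sign}(u_0-f)$ a.e.'' That characterization is the textbook description of the subdifferential of the $L^1$-norm \emph{when the functional is viewed on $L^1(\R^2)$ with dual $L^\infty(\R^2)$}. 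Here the subdifferential is taken in $BV(\R^2)^*$, which is much larger, and the sum rule only produces some abstract $\psi\in BV(\R^2)^*$ satisfying the subgradient inequality; nothing a priori says it is representable by integration against an $L^\infty$ function. Establishing that representation is the main technical content of the paper's proof: one first derives $\bigl|{_{BV(\R^2)^*}}\langle \psi, u\rangle_{BV(\R^2)}\bigr|\le\int_{\R^2}|u|$ for all $u\in BV(\R^2)$ (the paper's (\ref{eq:psiulequ})), then invokes the representation of $W^{1,1}(\R^2)^*$ functionals as $u\mapsto\int p\,u+\int q\cdot\nabla u$ with $p,q\in L^\infty$, and finally runs a scaling/mollification argument with $u_\e(x)=\e^{-2}u(\e^{-1}(x-y))$ to force $q=0$, which yields (\ref{eq:dualpairingpsi}). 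Without this (or some substitute), your final step ``$\dvg v=\lambda g$ in $\mathcal{D}'(\R^2)$, hence $\dvg v\in L^\infty$'' has no starting point, since conditions 2 and the bound $\lambda\ge\|\dvg v\|_{L^\infty(\R^2)}$ both hinge on it. (The stronger claim $g=\operatorname{sign}(u_0-f)$ on $\{u_0\ne f\}$ is neither needed nor justified in this setting; all that is used is the bound coming from (\ref{eq:psiulequ}).)

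A secondary gap is your treatment of condition 3 of (\ref{eq:setofvfields}): from the conjugate characterization you only get the pairing identity $\int_{\R^2} v\cdot d\nabla u_0=\|\nabla u_0\|_a$, where $\nabla u_0$ is a measure. Converting this into $-\int_{\R^2}u_0\,\dvg v=\int_{\R^2}|u_{0_x}|+|u_{0_y}|$ is not just ``writing the pairing on smooth components'': integration by parts against a measure is available for $C_c^1$ fields, while your $v$ is only $L^\infty$ with distributional divergence in $L^\infty$. The paper handles this with the approximation Lemma~\ref{lem:approxv} (smooth $v_j$ with $v_j\weakstarto v$, $\dvg v_j\weakstarto\dvg v$, $|v_j|_\infty\le1$) plus dominated convergence against the finite measure $\nabla u_0$ --- and note that this step logically requires condition 2 ($\dvg v\in L^\infty$) to be established first, whereas your write-up derives condition 3 before knowing anything about $\dvg v$. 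Filling in these two steps would make your argument coincide with the paper's proof.
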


\begin{proof}
This proof follows similar lines as the proofs of \cite[Theorem 2.3]{CasasKunischPola99} and \cite[Proposition 3]{Ring00}. Note that the result is trivially true if $u_0=0$.
We explore the consequences of    $0 \in \partial F_1(u_0)$.  Because both terms in $F_1$ are continuous in $u$, the subdifferential $\partial F(u_0)$ is given by the sum of the subdifferentials of the separate terms \cite[Chapter I, Proposition 5.6]{EkelandTemam76}.

The functional $u \mapsto \int_{\R^2} |u_x|+|u_y|$ can be written as the composition: $\|\cdot\|_a \circ \nabla$. Because $\|\cdot\|_a$ is a continuous functional on $\mathcal{M}_2(\R^2)$ 
and $\nabla$ is a continuous linear mapping from $BV$ to $\mathcal{M}_2(\R^2)$,  we can apply \cite[Chapter I, Proposition 5.7]{EkelandTemam76}:
\[
\partial(\|\cdot\|_a \circ \nabla)(u_0) = \nabla^* \partial \|\cdot\|_a(\nabla u_0).
\]
This means that
\[
\chi \in \partial(\|\cdot\|_a \circ \nabla)(u_0)
\Leftrightarrow \exists v \in \partial \|\cdot\|_a(\nabla u_0) \text{ such that } \chi = \nabla^* v.
\]
Note that since $L^1(\R^2; \R^2)\subset \mathcal{M}_2(\R^2)$ we have $\partial \|\cdot\|_a(\nabla u_0) \subset \mathcal{M}_2(\R^2)^* \subset L^\infty(\R^2; \R^2)$ and hence $v\in L^\infty(\R^2; \R^2)$.
Because $u_0\in BV(\R^2)$ and hence $\|\nabla u_0\|_a$ is finite, the subdifferential $\partial \|\cdot\|_a(\nabla u_0)$ is characterized by
\begin{equation}\label{eq:conditiononv}
v \in \partial \|\cdot\|_a(\nabla u_0) \Leftrightarrow \forall \mu \in \mathcal{M}_2(\R^2)\quad  {_{\mathcal{M}_2(\R^2)^*}}\langle v, \mu - \nabla u_0 \rangle_{\mathcal{M}_2(\R^2)} + \|\nabla u_0\|_a \leq \|\mu\|_a.
\end{equation}

Next we turn to the subdifferential for the fidelity term in $F_1$. First note that since $W^{1, 1}(\R^2) \subset BV(\R^2)$ we have $\partial \left(\int_{\R^2} |\cdot-f|\right)(u_0) \subset BV(\R^2)^* \subset W^{1, 1}(\R^2)^*$. Since the fidelity term in $F_1$ is finite for $u=u_0\in BV(\R^2)$, the subdifferential is determined by 
\begin{equation}\label{eq:conditiononpsi}
\psi \in \partial \left(\int_{\R^2} |\cdot-f|\right)(u_0) \Leftrightarrow \forall u\in BV(\R^2)\quad {_{BV(\R^2)^*}}\langle \psi, u-u_0 \rangle_{BV(\R^2)} + \int_{\R^2} |u_0-f| \leq \int_{\R^2} |u-f|.
\end{equation}

We deduce that there exist $v$ and $\psi$ as above such that
\begin{equation}\label{eq:nablastarvlambdapsi}
\nabla^* v + \lambda \psi = 0.
\end{equation}
Choosing $\mu=0$ and $\mu = 2 \nabla u_0$ in the right hand side statement of (\ref{eq:conditiononv}) leads to
\[
-{_{\mathcal{M}_2(\R^2)^*}}\langle v,\nabla u_0 \rangle_{\mathcal{M}_2(\R^2)} \leq -\|\nabla u_0\|_a \quad \text{and} \quad {_{\mathcal{M}_2(\R^2)^*}}\langle v,\nabla u_0 \rangle_{\mathcal{M}_2(\R^2)} \leq \|\nabla u_0\|_a,
\]
hence
\begin{equation}\label{eq:measurenablaf}
{_{\mathcal{M}_2(\R^2)^*}}\langle v,\nabla u_0 \rangle_{\mathcal{M}_2(\R^2)} = \|\nabla u_0\|_a.
\end{equation}
Substituting this back into (\ref{eq:conditiononv}) gives, for all $\mu\in \mathcal{M}_2(\R^2)$,
\[
{_{\mathcal{M}_2(\R^2)^*}}\langle v, \mu \rangle_{\mathcal{M}_2(\R^2)} \leq \|\mu\|_a.
\]
If we restrict this to $\mu=(\mu_1, \mu_2) \in L^1(\R^2; \R^2)$ this reads
\[
\int_{\R^2} v\cdot \mu \leq \int_{\R^2} |\mu_1| + |\mu_2|
\]
and hence $|v(x)|_\infty \leq 1$ almost everywhere, i.e. condition~\ref{item:condition1} in (\ref{eq:setofvfields}) holds.

Let $\tilde u\in BV(\R^2)$ and choose $u=\pm \tilde u + u_0$ in the right hand side statement of (\ref{eq:conditiononpsi}) (and then drop the tilde), then we get that for all $u \in BV(\R^2)$
\begin{equation}\label{eq:psiulequ}
\left| {_{BV(\R^2)^*}}\langle \psi, u \rangle_{BV(\R^2)} \right| \leq \int_{\R^2} |u|.
\end{equation}
We  use~\pref{eq:psiulequ}  to show that there exists an $L^\infty(\R^2)$ function such that the action of $\psi$ on test functions is given by integration against this function.
To this end, note that  $\psi \in W^{1,1}(\R^2)^*$ and hence  by 
\cite[3.4 and Theorem 3.8]{Adams75},  there exist $(p, q) \in L^\infty(\R^2) \times L^\infty(\R^2; \R^2)$ such that for all $u\in W^{1,1}(\R^2)$
\[
{_{BV(\R^2)^*}}\langle \psi, u \rangle_{BV(\R^2)} = \int_{\R^2} p\, u + \int_{\R^2} q\cdot \nabla u.
\]
Fix $u\in C_c^\infty(\R^2)$ and $y\in \R^2$. For $\e>0$ define $u_\e(x) := \e^{-2} u(\e^{-1} (x-y))$ and use this as $u$ in (\ref{eq:psiulequ}) to find via a substitution of variables
\[
\left| \int_{\R^2} p(\e x+y)\, u(x) \,dx + \e^{-1} \int_{\R^2} q(\e x+y)\cdot \nabla u(x) \,dx\right| \leq \int_{\R^2} |u(x)| \,dx.
\]
Because
\begin{align*}
\left| \int_{\R^2} p(\e x+y)\, u(x) \,dx \right| &\leq \|p\|_{L^{\infty}(\R^2)} \|u\|_{L^1(\R^2)},\\
\left|\int_{\R^2} q(\e x+y)\cdot \nabla u(x) \,dx\right| &\leq \|q\|_{L^\infty(\R^2)} \|\nabla u\|_{L^1(\R^2)},
\end{align*}
taking the limit $\e\to 0$ we deduce that
\[
\underset{\e\to 0}{\lim}\, \int_{\R^2} q(\e x+y)\cdot \nabla u(x) \,dx = 0.
\]
We claim that the above implies $q = 0$ a.e. To see this note that 
since $u$, and hence $\nabla u$, have compact support, we can replace, for $\e$ small,  $q$ by $\hat q := q|_{y+\supp u}$ in the preceding integral. We have, for any $p\geq1$, $\hat q \in L^p(\R^2; \R^2)$ and hence there exists a sequence $\{\hat q_n\}_{n=1}^\infty \subset C_c^\infty(\R^2; \R^2)$ such that $\hat q_n \to \hat q$ in $L^p(\R^2; \R^2)$. Via H\"older's inequality we find that
\begin{equation}\label{eq:bunchofintegrals}
0 = \underset{\e\to 0}{\lim}\, \int_{\R^2} q(\e x+y)\cdot \nabla u(x) \,dx = \underset{\e\to 0}{\lim}\, \underset{n\to \infty}{\lim}\, \int_{\R^2} \hat q_n(\e x+y)\cdot \nabla u(x) \,dx.
\end{equation}
Since $\{\hat q_n\}_{n=1}^\infty$ is a compact set of continuous functions defined on a compact set by the Arzel\`a-Ascoli theorem it is equicontinuous and hence
\[
\int_{\R^2} \hat q_n(\e x+y)\cdot \nabla u(x) \,dx \to \int_{\R^2} \hat q_n(y)\cdot \nabla u(x) \,dx, \quad \text{uniformly in } n \text{ as } \e \to 0.
\]
This allows us to interchange the limits in (\ref{eq:bunchofintegrals}) and find
\[
\underset{\e\to 0}{\lim}\, \underset{n\to \infty}{\lim}\, \int_{\R^2} \hat q_n(\e x+y)\cdot \nabla u(x) \,dx =  \underset{n\to \infty}{\lim}\, \underset{\e\to 0}{\lim}\, \int_{\R^2} \hat q_n(\e x+y)\cdot \nabla u(x) \,dx.
\]
We  now apply the dominated convergence theorem to find that 
\[
\underset{n\to \infty}{\lim}\, \underset{\e\to 0}{\lim}\, \int_{\R^2} \hat q_n(\e x+y)\cdot \nabla u(x) \,dx = \underset{n\to \infty}{\lim}\,  \int_{\R^2} \hat q_n(y)\cdot \nabla u(x) \,dx.
\]
Because $L^p$ convergence implies pointwise convergence almost everywhere and $\hat q_n(y)$ does not depend on the integration variable $x$, we conclude that for almost every $y\in \R^2$
\[
\underset{n\to \infty}{\lim}\,  \int_{\R^2} \hat q_n(y)\cdot \nabla u(x) \,dx = \int_{\R^2} \hat q(y)\cdot \nabla u(x) \,dx = \int_{\R^2} q(y)\cdot \nabla u(x) \,dx=0.
\] 
The function $u$ was chosen arbitrarily, hence $q=0$ a.e.
and we conclude for all $u\in C^\infty_c(\R^2)$, 
\begin{equation}\label{eq:dualpairingpsi}
{_{BV(\R^2)^*}}\langle \psi, u \rangle_{BV(\R^2)} = \int_{\R^2} p\, u.
\end{equation}
We thus have  $p=\psi$ in the sense of distributions, and hence 
 by equation (\ref{eq:nablastarvlambdapsi}),  there exists  $r \in L^\infty(\R^2)$ which as a distribution may be identified with 
$\nabla^* v$ such that  
\[
{_{BV(\R^2)^*}}\langle \nabla^* v, u \rangle_{BV(\R^2)} = \int_{\R^2} r \, u.
\]
Since for every $\varphi\in C_c^\infty(\R^2)$, 
\begin{align}
\int_{\R^2} \dvg v\, \varphi &= -\int_{\R^2} v \cdot \nabla \varphi = - {_{\mathcal{M}_2(\R^2)^*}}\langle v, \nabla \varphi\rangle_{\mathcal{M}_2(\R^2)}\notag\\
&=  -{_{BV(\R^2)^*}}\langle \nabla^* v, \varphi\rangle_{BV(\R^2)} = -\int_{\R^2} r  \,\varphi,\label{eq:nablastarisminusdiv}
\end{align}
we have  $-\dvg v = r \in L^\infty (\R^2)$ in the sense of distributions.
This gives  condition~\ref{item:condition2} in (\ref{eq:setofvfields}). Also by equation (\ref{eq:nablastarvlambdapsi}):
$\displaystyle
-\dvg v + \lambda \psi = 0, 
$
in the sense of distributions. 

We remember that we can write (\ref{eq:measurenablaf}) as
\[
\int_{\R^2} v\cdot d\nabla u_0= \int_{\R^2} |u_{0_x}| + |u_{0_y}|.
\]
For $\phi \in C_c^1(\R^2; \R^2)$ we have, \cite[equation (3.2)]{AmbrosioFuscoPallara00},
\[
\int_{\R^2} \phi\cdot d\nabla u_0 = -\int_{\R^2} u_0\, \dvg \phi.
\]
The vector field $v$ satisfies all the conditions of Lemma~\ref{lem:approxv} and hence there exists a sequence $\{v_j\}_{j=1}^{\infty} \subset C_c^\infty(\R^2; \R^2)$ such that as $j\to\infty$, $v_j \overset{*}\rightharpoonup v$ in $L^\infty(\R^2; \R^2)$, and $\dvg v_j  \overset{*}\rightharpoonup \dvg v$ in $L^\infty(\R^2)$. We compute
\[
-\int_{\R^2} u_0\, \dvg v = -\underset{j\to\infty}{\lim}\,\int_{\R^2} u_0\, \dvg v_j = \underset{j\to\infty}{\lim}\,\int_{\R^2} v_j \cdot d\nabla u_0.
\]
From $\|v_j\|_{L^\infty(\R^2)} \leq C$ and $v_j \in C_c^\infty(\R^2)$ we deduce that $\underset{x\in\R^2}{\max}\, v_j(x) \leq C$. Since $u_0\in BV(\R^2)$ we have that $-\infty < \nabla u_0(\R^2) < \infty$ and hence the constant function $C$ is integrable against the measure $\nabla u_0$. Because $v_j(x) \to v(x)$ almost everywhere the dominated convergence theorem tells us
\[
\underset{j\to\infty}{\lim}\,\int_{\R^2} v_j \cdot d\nabla u_0 = \int_{\R^2} v \cdot d\nabla u_0,
\]
from which we conclude that condition~\ref{item:condition3} in (\ref{eq:setofvfields}) holds.

Finally, combining (\ref{eq:psiulequ}), (\ref{eq:nablastarvlambdapsi}), and the fact that $\psi$ as distribution is represented by an $L^\infty(\R^2)$ function, we have by density of $C_c^\infty(\R^2)$ in $L^1(\R^2)$ that for every $u\in L^1(\R^2)$
\begin{equation}\label{eq:getlambdabound}
\int_{\R^2} u\, \dvg v = \lambda \int_{\R^2} u\, \psi \leq \lambda \int_{\R^2} |u|, 
\end{equation}
and therefore $\lambda \geq \|\dvg v\|_{L^\infty(\R^2)}$.

\end{proof}

\begin{theorem}\label{thm:minimizervectorfield} 
Let $f\in BV(\R^2)$ be the measured signal in $F_1$, 
then the following two statements are equivalent:
\begin{enumerate}
\item\label{item:fisminimizerF1} $f$ is a minimizer of $F_1$ over $BV(\R^2)$.
\item\label{item:vectorfield} There exists a vector field $v\in \mathcal{V}(f)$ and $\lambda \geq \|\dvg v\|_{L^\infty(\R^2)}$.
\end{enumerate}
Moreover, if there exists a vector field $v\in \mathcal{V}(f)$ and $\lambda > \|\dvg v\|_{L^\infty(\R^2)}$, then $f$ is the unique minimizer of $F_1$ over $BV(\R^2)$.
\end{theorem}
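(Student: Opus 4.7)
The direction (\ref{item:fisminimizerF1}) $\Rightarrow$ (\ref{item:vectorfield}) is an immediate application of Lemma~\ref{lem:minimimpliesvectorf} with $u_0 = f$, so the real content is the converse together with the uniqueness statement. My plan is to handle both by a single calibration-type computation, comparing $F_1(u)$ to $F_1(f)$ for an arbitrary competitor $u \in BV(\R^2)$.

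The strategy rests on two observations about the candidate vector field $v \in \mathcal{V}(f)$. First, condition~\ref{item:condition3} in (\ref{eq:setofvfields}) gives the exact calibration identity
\[
F_1(f) = \int_{\R^2} |f_x| + |f_y| = -\int_{\R^2} f\,\dvg v,
\]
since $f$ makes the fidelity term vanish. Second, for any competitor $u \in BV(\R^2)$ one needs the one-sided inequality $\int_{\R^2}|u_x|+|u_y| \geq -\int_{\R^2} u\,\dvg v$. This does not follow directly from the definition~\pref{eq:anisotv} of the anisotropic total variation because that supremum is taken over $C_c^1$ test fields, whereas $v$ is only in $L^\infty$ with $\dvg v \in L^\infty$. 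I would bridge this gap exactly as in the proof of Lemma~\ref{lem:minimimpliesvectorf}: invoke Lemma~\ref{lem:approxv} to get a sequence $v_j \in C_c^\infty(\R^2;\R^2)$ with $|v_j|_\infty \leq 1$, $v_j \weakstarto v$ and $\dvg v_j \weakstarto \dvg v$ in $L^\infty$, test the TV definition against $v_j$, and pass to the limit using dominated convergence against the finite measure $\nabla u$.

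Combining these two ingredients and adding the fidelity term,
\[
F_1(u) - F_1(f) \geq -\int_{\R^2}(u-f)\,\dvg v + \lambda \|u-f\|_{L^1(\R^2)} \geq \bigl(\lambda - \|\dvg v\|_{L^\infty(\R^2)}\bigr)\|u-f\|_{L^1(\R^2)},
\]
where the second inequality uses $|\int (u-f)\dvg v| \leq \|\dvg v\|_{L^\infty}\|u-f\|_{L^1}$. Since $\lambda \geq \|\dvg v\|_{L^\infty}$ by hypothesis, $F_1(u) \geq F_1(f)$ and $f$ is a minimizer. For the uniqueness addendum, if $\lambda > \|\dvg v\|_{L^\infty}$ then the final bound is strictly positive whenever $\|u-f\|_{L^1} > 0$, so any minimizer must coincide with $f$ almost everywhere.

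The only nontrivial point is the approximation argument needed to justify $\int |u_x|+|u_y| \geq -\int u\,\dvg v$ for a general $L^\infty$ vector field $v$ with $L^\infty$ divergence; everything else is bookkeeping. Since that approximation was already established and used inside the proof of Lemma~\ref{lem:minimimpliesvectorf}, I expect the present theorem to be essentially a packaging of the preceding lemma together with this calibration inequality.
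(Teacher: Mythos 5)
Your proposal is correct and follows essentially the same route as the paper: the forward implication is quoted from Lemma~\ref{lem:minimimpliesvectorf}, and the converse plus uniqueness come from the same calibration computation $F_1(u) \geq F_1(f) + (\lambda - \|\dvg v\|_{L^\infty(\R^2)})\|u-f\|_{L^1(\R^2)}$. The only cosmetic difference is that you re-derive the inequality $\int_{\R^2}|u_x|+|u_y| \geq -\int_{\R^2} u\,\dvg v$ directly from the approximation Lemma~\ref{lem:approxv}, whereas the paper cites Corollary~\ref{cor:anisotvLinfty}, which is itself an immediate consequence of that lemma.
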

\begin{proof}

The implication \ref{item:fisminimizerF1} $\Rightarrow$ \ref{item:vectorfield} follows directly from Lemma~\ref{lem:minimimpliesvectorf}. For the reverse direction, we follow \cite[Lemma 5.5]{ChanEsedoglu05}. 
To this end, for any $u\in BV(\R^2)$  Corollary~\ref{cor:anisotvLinfty} implies 
\[
\int_{\R^2} |u_x| + |u_y| \geq -\int_{\R^2} u\, \dvg v
\]
and hence
\begin{align*}
F_1(u) &\geq 
-\int_{\R^2} u\, \dvg v + \lambda \int_{\R^2} |u-f|\\
&= -\int_{\R^2} f\, \dvg v + \lambda \int_{\R^2} |u-f| - \int_{\R^2} (u-f)\, \dvg v\\
&\geq F_1(f) + \big( \lambda - \|\dvg v\|_{L^\infty(\R^2)}\big) \int_{\R^2} |u-f|\\
&\geq F_1(f).
\end{align*}
If $\lambda > \|\dvg v\|_{L^\infty(\R^2)}$, the last inequality  is strict.
\end{proof}

\section{$F_1$ and 2D barcodes}\label{sec:Q2}

We define some further notation to make the idea of a 2D bar code precise. By a 2D bar code we mean a bounded set  $S\subset \R^2$  
such that $\partial S$ is the union of a finite number of non-self-intersecting polygonal boundaries, each a finite union of horizontal and vertical line segments.
For $s\in \R$, define the horizontal and vertical lines
\[
l_h(s) := \left\{x\in \R^2: x_2=s\right\}, \qquad l_v(s) := \left\{ x\in \R^2: x_1 = s\right\}.
\]
For given $s\in \R$ let $\omega_h^1(s)$ be the width of the shortest connected component of $l_h(s) \cap S$ and $\omega_h^2(s)$ the width of the shortest connected component of $l_h(s)\cap S^c$. Analogously define $\omega_v^{1,2}(s)$ for the connected components of $l_v(s)\cap S$ and $l_v(s)\cap S^c$. We define the $X$-dimension of $S$ to be 
\[
\omega := \underset{s\in\R}{\min}\, \{\omega_h^1(s), \omega_h^2(s), \omega_v^1(s), \omega_v^2(s)\}.
\] 
In words: The $X$-dimension is the shortest horizontal and vertical length scale of both the black squares and the white background.
We denote the set of 2D bar codes by $\mathcal{S}$ and the set of 2D bar codes with prescribed $X$-dimension larger than or equal to $\omega$ by $\mathcal{S}_\omega$. 
Note that we do not require the horizontal and vertical length scales of the black squares or the white background to be integer multiples of the $X$-dimension.
We identify the 2D bar codes with their characteristic functions ($\chi_S$ is the characteristic function of the set $S$):
\begin{align}
\mathcal{B} &:= \left\{ u \in BV(\R^2; \{0, 1\}) : \text{ there is an } S \in \mathcal{S} \text{ such that } u = \chi_S\right\},\label{eq:barcodes}\\
\mathcal{B}_\omega &:= \left\{ u \in BV(\R^2; \{0, 1\}) : \text{ there is an } S \in \mathcal{S}_\omega \text{ such that } u = \chi_S\right\}.\notag
\end{align}

%

By $\partial_*S$ we denote the {\em reduced boundary} of a set $S$, i.e. all points in $\partial S$ for which there is a well defined normal vector (see \cite[Definition 3.54]{AmbrosioFuscoPallara00}). For example, if $S$ is a square its reduced boundary consists of all boundary points except the corners.

\begin{theorem}\label{thm:vectorfieldforbar code}
Let $S \in \mathcal{S}_\omega$. Then there exists a vector field $v\in \mathcal{V}(\chi_S)$ with  $\|\dvg v\|_{L^\infty(\R^2)} = \frac4\omega$.

\end{theorem}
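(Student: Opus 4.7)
My plan is to construct $v=(v_1,v_2)$ by a one-dimensional design on each cross-section of $\chi_S$. The key geometric input is that, since $\partial S$ is a finite union of non-intersecting axis-aligned polygons with $X$-dimension at least $\omega$, every horizontal line $l_h(x_2)$ meets $S$ in a finite disjoint union $[a_1,b_1]\cup\cdots\cup[a_n,b_n]$ of closed intervals of length $\geq \omega$, separated by gaps of length $\geq \omega$; vertical lines admit the analogous decomposition.

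I define $v_1(\cdot,x_2)$ on each horizontal line by setting $v_1\equiv +1$ on $(-\infty,a_1)$, $v_1\equiv -1$ on $(b_n,+\infty)$, linearly interpolating from $+1$ at $a_i$ to $-1$ at $b_i$ across each black interval $[a_i,b_i]$, and linearly interpolating from $-1$ at $b_i$ back to $+1$ at $a_{i+1}$ across each bounded white gap; on horizontal lines disjoint from $S$ set $v_1\equiv 0$. Define $v_2$ symmetrically on vertical lines. Condition~\ref{item:condition1} of \eqref{eq:setofvfields} is immediate, so $|v|_\infty\leq 1$ a.e. For condition~\ref{item:condition3}, Fubini yields $\int_{S_{x_2}}\partial_1 v_1\,dx_1 = \sum_{i=1}^{n(x_2)}(v_1(b_i,x_2)-v_1(a_i,x_2)) = -2n(x_2)$ at each horizontal slice, so that $-\int_{\R^2}\chi_S\,\partial_1 v_1 = 2\int n(x_2)\,dx_2$. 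By the slicing characterization of $BV$ perimeter, each of the $n(x_2)$ black intervals contributes two jumps of $\chi_S$ along $l_h(x_2)$, so this equals $\int|(\chi_S)_x|$; the symmetric calculation for $v_2$ produces $\int|(\chi_S)_y|$, and condition~\ref{item:condition3} follows.

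The technical heart is condition~\ref{item:condition2} together with the bound $\|\dvg v\|_{L^\infty}\leq 4/\omega$. Within each open horizontal strip bounded above and below by horizontal segments of $\partial S$, the cross-section $l_h(x_2)\cap S$ is independent of $x_2$, so $v_1$ is independent of $x_2$ on that strip and piecewise linear in $x_1$ with slopes in $\{\pm 2/w:w\geq \omega\}\cup\{0\}$; the classical partial derivative $\partial_1 v_1$ is therefore defined a.e.\ with $|\partial_1 v_1|\leq 2/\omega$. The main obstacle I expect is verifying that jumps of $v_1$ across horizontal segments of $\partial S$ contribute only to the distributional $\partial_2 v_1$ and not to $\partial_1 v_1$. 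This is resolved by testing against $\phi\in C_c^\infty(\R^2)$ and integrating by parts in $x_1$ at each fixed $x_2$ via Fubini (legal because $v_1(\cdot,x_2)$ is piecewise Lipschitz), which gives $-\int v_1\,\partial_1\phi = \int(\partial_1 v_1)_{\mathrm{cl}}\,\phi$, so the distributional $\partial_1 v_1$ agrees with its bounded a.e.\ classical derivative. The symmetric argument yields $|\partial_2 v_2|\leq 2/\omega$ a.e., and therefore $\|\dvg v\|_{L^\infty}\leq 4/\omega$; equality is realized at any point where the horizontal and vertical cross-sections simultaneously attain the $X$-dimension $\omega$, which is precisely the worst case encoded by the definition of $\omega$.
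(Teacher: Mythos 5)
Your construction is essentially the paper's: on each horizontal (resp.\ vertical) line you linearly interpolate $v_1$ (resp.\ $v_2$) between the values $\pm1$ at the vertical (resp.\ horizontal) boundary crossings, which by the definition of the $X$-dimension are at least $\omega$ apart, giving $|v(x)|_\infty\le 1$ and slopes bounded by $2/\omega$; the only deviations are cosmetic (you keep $v_1=\pm1$ on the outer half-lines instead of ramping to zero over a length $\geq\omega$, and you verify condition~\ref{item:condition3} by Fubini/slicing rather than by the divergence theorem over $\partial_* S$), and your explicit check that the jumps across horizontal segments only feed $\partial_2 v_1$, not $\dvg v$, is a point the paper leaves implicit. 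One caveat, shared with the paper's own proof: the argument really yields $\|\dvg v\|_{L^\infty(\R^2)}\le \frac4\omega$, and exact equality need not hold for every $S\in\mathcal{S}_\omega$ (e.g.\ a single $\omega\times 2\omega$ rectangle, where both your field and the paper's have $\|\dvg v\|_{L^\infty}<\frac4\omega$), but only the upper bound is used in Corollary~\ref{cor:leavebar codealone}, so this imprecision is immaterial.
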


\begin{figure} 
\centerline{{\includegraphics[width=1.8in]{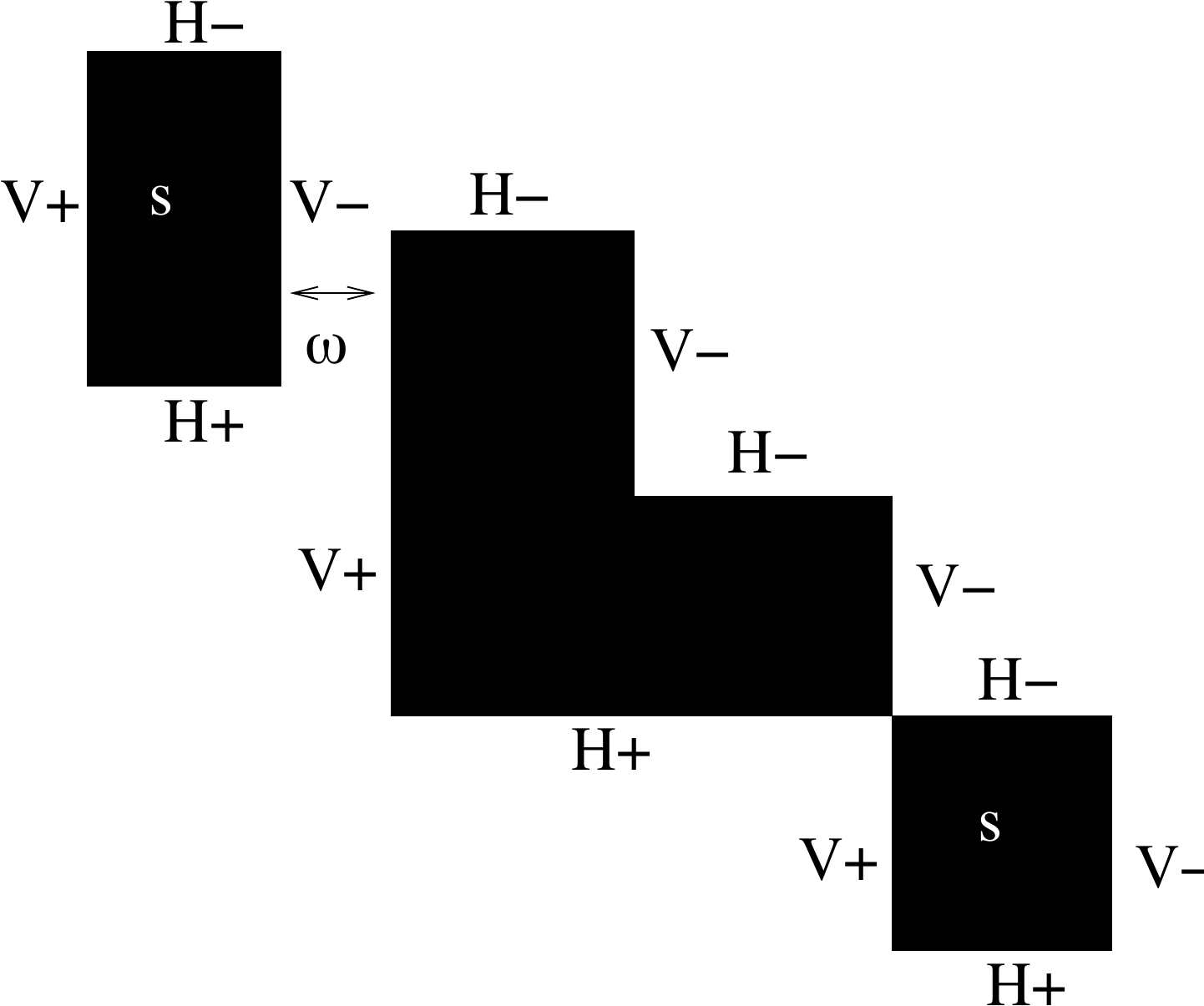}}}
\caption{A possible choice for S in Theorem~\ref{thm:vectorfieldforbar code} with the boundary parts $V_\pm$ and $H_\pm$ indicated as well as the length $\omega$.}
\label{fig:Sregion}
\end{figure}

\begin{proof}
Let $n(x)$ be the outward normal to $S$ at $x\in \partial_* S$ and define the following subsets of $\partial_* S$:
\[
V_\pm := \left\{ x\in \partial_* S: -n(x) = (\pm 1, 0)\right\}, \qquad H_\pm := \left\{ x\in \partial_* S: -n(x) = (0, \pm 1)\right\}.
\]
Let $V := V_- \cup V_+$ and $H:= H_- \cup H_+$  ($V\cup H = \partial_* S$). See Figure~\ref{fig:Sregion} for an illustration.
For each $s\in\R$, if $l_h(s) \cap V$ and $l_v(s) \cap H$ are nonempty, they are finite sets of isolated points. Furthermore, if we write $l_h(s) \cap V = \{x^{(1)}, \ldots x^{(2n)}\}$ where  $x_1^{(i)} < x_1^{(i+1)}$ for all $i$, then $x^{(i)} \in V_+$ for odd $i$ and $x^{(i)} \in V_-$ for even $i$. The analogous statement holds for $l_v(s) \cap H$. By definition of $\omega$ we have 
\begin{equation}
\forall x, y\in l_h(s) \cap V,\, |x_1-y_1| \geq \omega \quad {\rm and} \quad \forall x, y\in l_v(s) \cap H,\, |x_2-y_2| \geq \omega.\label{eq:distance}
\end{equation}
For each such $s\in \R$, fix $x^{(0)}, x^{(2n+1)} \in l_h(s)$ such that $|x_1^{(0)} - x_1^{(1)}| \geq \omega$ and $|x_1^{(2n)} - x_1^{(2n+1)}| \geq \omega$. 

We define $v_1 \in L^\infty(\R^2)$ with compact support as follows. For each $s\in\R$ for which $l_h(s) \cap V = \emptyset$, let $v_1(\cdot, s)=0$. For each $s\in \R$ for which $l_h(s) \cap V$ is not empty,  $v_1(\cdot,  s)$ is defined by: 
\begin{itemize}
\item linear on each interval $\left[x_1^{(i)}, x_1^{(i+1)}\right]$ for $i=0, \ldots, 2n$,
\item 
$ v_1(x) = \left\{ \begin{array}{ll} -1 & \text{if } x\in V_-,\\ 1 & \text{if } x\in V_+,\end{array}\right.$
\item  zero on $\left(-\infty, x_1^{(0)}\right] \cap \left[x_1^{(2n+1)}, \infty\right)$.
\end{itemize}
The function $v_1(\cdot, s)$ is Lipschitz continuous for each $s\in \R$ and $\left\|\frac{\partial v_1}{\partial x_1}\right\|_{L^\infty(\R^2)} = \frac2\omega$.\\
We define $v_2 \in L^\infty(\R^2)$ in a similar way on each vertical line $l_v(s)$. In particular $v_2(s, \cdot)$ is Lipschitz continuous for each $s\in R$ and satisfies
\[
v_2(x) = \left\{ \begin{array}{ll} -1 & \text{if } x\in H_-,\\ 1 & \text{if } x\in H_+,\end{array}\right. \qquad{\rm with} \quad \left\|\frac{\partial v_2}{\partial x_2}\right\|_{L^\infty(\R^2)} = \frac2\omega.
\]
Now for $v:=(v_1, v_2)^T \in L^\infty(\R^2; \R^2)$ we have $|v(x)|_\infty \leq 1$ for $x\in \R^2$. We compute
\begin{align*}
-\int_S \dvg v &= -\int_{\partial_* S} v\cdot n
= -\int_{V_-} v_1 - \int_{H_-} v_2 + \int_{V_+} v_1 + \int_{H_+} v_2\\
&= \int_{V_-} 1 + \int_{H_-} 1 + \int_{V_+} 1 + \int_{H_+} 1
= \mathcal{H}^1(\partial_*S)
= \mathcal{H}^1(\partial S)\\
&= \int_{\R^2} |\chi_{S_x}| + |\chi_{S_y}|.
\end{align*}
We have $\dvg v \in L^\infty(\R^2)$ with 
\[
\|\dvg v\|_{L^\infty(\R^2)} \leq  \left\|\frac{\partial v_1}{\partial x_1}\right\|_{L^\infty(\R^2)} + \left\|\frac{\partial v_2}{\partial x_2}\right\|_{L^\infty(\R^2)} \,= \, \frac4\omega.
\]

\end{proof}

\begin{corol}\label{cor:leavebar codealone}
Let $f\in \mathcal{B}_\omega$ be the measured signal in $F_1$. If $\lambda \geq \frac4\omega$ ($\lambda > \frac4\omega$), then $u=f$ is a minimizer (the unique minimizer) of $F_1$ over $BV(\R^2)$.
\end{corol}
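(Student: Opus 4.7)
The plan is to simply concatenate the two substantial results that immediately precede this corollary. Since $f \in \mathcal{B}_\omega$, by definition (\ref{eq:barcodes}) there exists $S \in \mathcal{S}_\omega$ such that $f = \chi_S$. Theorem~\ref{thm:vectorfieldforbar code} then hands us a vector field $v \in \mathcal{V}(\chi_S) = \mathcal{V}(f)$ with the sharp bound $\|\dvg v\|_{L^\infty(\R^2)} = \tfrac{4}{\omega}$.

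Next, I would feed this vector field into Theorem~\ref{thm:minimizervectorfield}. The hypothesis $\lambda \geq \tfrac{4}{\omega}$ is exactly $\lambda \geq \|\dvg v\|_{L^\infty(\R^2)}$, which is condition~\ref{item:vectorfield} of that theorem, so the equivalence gives that $f$ minimizes $F_1$ over $BV(\R^2)$. The strict inequality $\lambda > \tfrac{4}{\omega}$ triggers the second (\emph{moreover}) clause of Theorem~\ref{thm:minimizervectorfield}, yielding uniqueness.

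There is no real obstacle here; essentially all of the work was absorbed into Theorem~\ref{thm:vectorfieldforbar code} (where the explicit piecewise-linear vector field was constructed on each horizontal/vertical slice of $S$ using the $X$-dimension bound (\ref{eq:distance}) to keep $\|\partial_i v_i\|_\infty \leq 2/\omega$) and Theorem~\ref{thm:minimizervectorfield} (where the vector-field criterion was shown to be necessary and sufficient via convex duality). The corollary is thus a one-line synthesis of these two results, and the proof should be written as such, making explicit only the identification $f = \chi_S$ and the chain of implications.
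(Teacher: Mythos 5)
Your proof is correct and coincides with the paper's own argument: the corollary is stated there as following directly from Theorems~\ref{thm:minimizervectorfield} and~\ref{thm:vectorfieldforbar code}, exactly the synthesis you describe (the vector field $v\in\mathcal{V}(f)$ with $\|\dvg v\|_{L^\infty(\R^2)}=\frac4\omega$ from Theorem~\ref{thm:vectorfieldforbar code} fed into the sufficiency and uniqueness clauses of Theorem~\ref{thm:minimizervectorfield}). No gaps; nothing further is needed.
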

\begin{proof}
This follows directly from Theorems~\ref{thm:minimizervectorfield} and~\ref{thm:vectorfieldforbar code}.
\end{proof}


\begin{remark}\label{rem:differentvectorfield}
In the case where $S$ is a single square one can adapt the vector field construction in \cite[Theorem 4.1]{EsedogluOsher04} to our situation to get a different vector field $\tilde v\in \mathcal{V}(\chi_S)$. In this case one finds the same bound on $\lambda$ since $\|\dvg \tilde v\|_{L^\infty(\R^2)} = \frac4\omega$.
\end{remark}

\begin{erratum}\label{err:counterexample}
Versions of this paper prior to arXiv:1007.1035v3 ended the current section with a lemma that stated that: if $f\in BV(\R^2; \{0,1\})$ is both the measured signal in $F_1$ and a minimizer of $F_1$ over $BV(\R^2)$, then $f\in \mathcal{B}$. This statement is false, as can be seen from the following counterexample, which was provided in a private communication by Nils Dabrock.

Let $h\in (1/\sqrt{2},1)$ and let $\Omega := B(0,1) \cap [-h,h]^2$ be a truncated circle. Let $f = \chi_\Omega \in L^1(\R)$ be the characteristic function of $\Omega$. Define $s:= \sqrt{1-h^2}$ and let\footnote{Numerical simulations by Nils Dabrock suggest this condition can be weakened to $\lambda>\frac1s$.} $\lambda > \frac2s$. Define, for $r\in \R$, $w(r):= \min\{1, \max\{-1,r/s\}\}$ and let, for $(x,y)\in \R^2$,
\[
v(x,y) := \left(\begin{array}{c} w(x)\\ w(y)\end{array}\right).
\]
We will now show that $v\in \mathcal{V}(f)$ and hence, by Theorem~\ref{thm:minimizervectorfield}, $F_1$ is faithful to $f$.

It can be verified by direct computation that, for $x\in \R^2$, $|v(x)|_\infty \leq 1$ and $\|\dvg v\|_{L^\infty(\R^2)} \leq \frac2s$. Furthermore, we have for all $z\in \partial\Omega$ that $v(z)\cdot n_{\partial \Omega}(z) = |n_{\partial \Omega}(z)|_1$, where $n_{\partial \Omega}$ is the outward normal vector to the boundary $\partial \Omega$. By the definition in (\ref{eq:anisotv}) and Corollary~\ref{cor:anisotvLinfty} we then find
\begin{align*}
\int_{\R^2} |f_x| + |f_y| &= \sup\left\{\int_{\R^2} f\, \dvg \varphi: \varphi \in C_c^1(\R^2; \R^2), \forall z \, \, |\varphi(z)|_\infty \leq 1\right\}\\
&= \sup\left\{\int_\Omega \dvg \varphi: \varphi \in C_c^1(\R^2; \R^2), \forall z \, \, |\varphi(z)|_\infty \leq 1\right\}\\
&= \sup\left\{\int_{\partial\Omega} \varphi \cdot n_{\partial \Omega}: \varphi \in C_c^1(\R^2; \R^2), \forall z \, \, |\varphi(z)|_\infty \leq 1\right\}\\
&\leq \int_{\partial\Omega} |n_{\partial\Omega}(z)|_1 = \int_{\partial\Omega} v\cdot n_{\partial\Omega} = \int_{\R^2} f\, \dvg v\\
&\leq \sup \left\{ \int_{\R^2} f \,\text{div} \,  \varphi : \varphi\in L^\infty(\R^2; \R^2), \dvg \varphi \in L^\infty(\R^2),  \, |\varphi(z)|_\infty\leq 1 \, {\rm a.e.} \right\}\\
&= \int_{\R^2} |f_x|+|f_y|.
\end{align*}

\end{erratum}

\section{Minimizers of $\overline{F_1}$ and $F_2$} 

Next we investigate minimizers of $\overline{F_1}$. Due to the binary constraint on  admissible functions, the minimization problem is no longer convex. However, following Chan and Esedo\=glu \cite{ChanEsedoglu05} (see also \cite{ChanEsedogluNikolova06}) there is a simple and elegant convex reformulation of  the problem. 
\begin{lemma}\label{lem:convexify}
If $v\in BV(\R^2; [0,1])$ is a minimizer of the convex functional 
\[{F_2} (v): = \int_{\R^2} |v_x| + |v_y| \,\, + \,\, \lambda \int_{\R^2} 
\left ( | 1 - f| \, -\,  |f|\right) \, v,
\]
then for a.e. $t\in [0,1]$, $u = \chi_{E(t)}$ 
is a minimizer of $F_1$ over $BV(\R^2; \{0, 1\})$ 
with the same $\lambda$ where 
\[ E(t) := \{x\in\R^2 :  v(x) \ge t\}.
\]
\end{lemma}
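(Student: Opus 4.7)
The plan is to follow the Chan--Esedoglu ``convex relaxation'' argument: show that $F_1$ on $BV(\R^2;\{0,1\})$ agrees up to an additive constant with $F_2$ restricted to characteristic functions, and then exploit the coarea formula together with the layer-cake representation to transfer a minimizer of $F_2$ over $BV(\R^2;[0,1])$ to binary minimizers via its superlevel sets.

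First I would establish the pointwise identity $|\chi_S - f| = \chi_S|1-f| + (1-\chi_S)|f|$, which after integration yields
\[
F_1(\chi_S) \;=\; \int_{\R^2} |\chi_{S_x}| + |\chi_{S_y}| + \lambda \int_{\R^2} (|1-f| - |f|)\chi_S + \lambda \int_{\R^2} |f| \;=\; F_2(\chi_S) + \lambda \|f\|_{L^1}.
\]
Since the constant $\lambda \|f\|_{L^1}$ is independent of $S$, minimizing $F_1$ over $BV(\R^2;\{0,1\})$ is equivalent to minimizing $F_2$ over $BV(\R^2;\{0,1\})$.

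Next I would invoke the anisotropic coarea identity
\[
\int_{\R^2} |v_x| + |v_y| \;=\; \int_0^1 \int_{\R^2} |(\chi_{E(t)})_x| + |(\chi_{E(t)})_y|\, dt,
\]
which follows from the classical one-dimensional coarea applied slice by slice to each of the partial derivative measures $|v_x|$ and $|v_y|$ via Fubini. Combining this with the layer-cake decomposition $v(x) = \int_0^1 \chi_{E(t)}(x)\, dt$, which is valid precisely because $v$ takes values in $[0,1]$, and applying Fubini to the linear fidelity term gives
\[
F_2(v) \;=\; \int_0^1 F_2(\chi_{E(t)})\, dt.
\]

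Finally I would set $m := \inf\{F_2(w) : w \in BV(\R^2;\{0,1\})\}$. Because $BV(\R^2;\{0,1\}) \subset BV(\R^2;[0,1])$ and $v$ minimizes $F_2$ over the larger class, $F_2(v) \le m$. Conversely, $F_2(\chi_{E(t)}) \ge m$ for a.e.\ $t$ (by the coarea formula $E(t)$ has finite perimeter for a.e.\ $t$, so $\chi_{E(t)} \in BV(\R^2;\{0,1\})$), whence the integral identity gives $F_2(v) \ge m$. Therefore $F_2(v) = m$ and
\[
\int_0^1 \big(F_2(\chi_{E(t)}) - m\big)\, dt \;=\; 0
\]
with a nonnegative integrand, forcing $F_2(\chi_{E(t)}) = m$ for a.e.\ $t \in [0,1]$. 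By the first step, this says $\chi_{E(t)}$ minimizes $F_1$ over $BV(\R^2;\{0,1\})$ for a.e.\ $t$, which is the claim. The main technical obstacle will be the rigorous bookkeeping for the anisotropic coarea and the measurability in $t$ required to apply Fubini -- both standard once one decomposes the anisotropic total variation into its $x$- and $y$-components and applies the one-dimensional coarea theorem to each.
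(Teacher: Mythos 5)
Your proof is correct and follows essentially the same route as the paper: the Chan--Esedo\=glu--Nikolova argument, combining the observation that $F_1$ and $F_2$ differ by the constant $\lambda\|f\|_{L^1}$ on binary functions with the layer-cake identity $v=\int_0^1\chi_{E(t)}\,dt$ and the anisotropic coarea formula (the paper's Lemma~\ref{lem:coarea}) to obtain $F_2(v)=\int_0^1 F_2(\chi_{E(t)})\,dt$ up to that constant. The only difference is that you spell out the final averaging step (via the infimum $m$ over binary competitors) which the paper leaves implicit, and you sketch a slice-by-slice proof of the anisotropic coarea formula that the paper instead delegates to its appendix.
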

\begin{proof}
The proof is  essentially a repetition of \cite[Theorem 2]{ChanEsedogluNikolova06}.
Let $v\in BV(\R^2; [0,1])$ be a minimizer of ${F_2}$. Because
\[
v(x) = \int_0^1 \chi_{[0, v(x)]}(t)\,dt = \int_0^1 \chi_{E(t)}(x)\,dt
\]
we compute
\begin{align*}
\int_{\R^2} \big( |1-f| -|f| \big) v &= \int_{\R^2} \int_0^1 \big( |1-f| -|f| \big) \chi_{E(t)} \,dt \,dx
= \int_0^1 \int_{E(t)} \big( |1-f| - |f| \big) \,dx \,dt\\
&= \int_0^1 \left( \int_{E(t)} |1-f| - \int_{\R^2} |f| + \int_{E(t)^c} |f| \right)
= \int_0^1 \left( \int_{\R^2} |\chi_{E(t)}-f| \right) - C,
\end{align*}
where $C$ does not depend on $v$. Hence together with  Lemma~\ref{lem:coarea},  we can now write
\[
{F_2}(v) = \int_0^1 \left( \int_{\R^2} |\chi_{E(t)_x}| + |\chi_{E(t)_y}| \,\, + \,\, \lambda \int_{\R^2} |\chi_{E(t)}-f|  \right) - \lambda C = \int_0^1 \overline{F_1}(\chi_{E(t)}) - \lambda C.
\]
Since $v$ is a minimizer of ${F_2}$, we find that for almost every $t\in [0,1]$, $\chi_{E(t)}$ is a minimizer of $\overline{F_1}$.   

\end{proof}

We now turn to minimizers of ${F_2}$. In order to 
 stay within the general framework of convex analysis, we have to define our functional on a vector space.  Thus we define for all $u\in BV(\R^2)$, the functional
\[
\widehat{F_2}(u) := {F_2}(u) + \zeta_{BV(\R^2; [0, 1])}(u),
\]
where for  a given set $A$,  the  $\zeta_A$ is defined as
\[
\zeta_A(x) := \left\{ \begin{array}{ll} 0 & \text{if } x\in A,\\ +\infty & \text{if } x\in A^c.\end{array}\right.
\]
Note that minimizing $\widehat{F_2}$ over $BV(\R^2)$ is equivalent to minimizing ${F_2}$ over $BV(\R^2; [0, 1])$.

We will now formulate results that tell us which conditions are necessary and/or sufficient for $u\in BV(\R^2)$ to be a minimizer of $\widehat{F_2}$. 
First we address the implications from convex analysis for minimizers of $\widehat{F_2}$. 
\begin{lemma}\label{lem:subdifs}
$u_0\in BV(\R^2)$ is a minimizer of $\widehat{F_2}$ over $BV(\R^2)$ if and only if $u_0\in BV(\R^2, [0, 1])$ and there exists $v\in L^\infty(\R^2; \R^2)$ satisfying
\begin{equation}\label{eq:measurecondtion}
\forall \mu \in \mathcal{M}_2(\R^2)\quad  {_{\mathcal{M}_2(\R^2)^*}}\langle v, \mu - \nabla u_0 \rangle_{\mathcal{M}_2(\R^2)} + \|\nabla u_0\|_a \leq \|\mu\|_a
\end{equation}
and in addition there exists $\xi\in BV(\R^2)^*$ such that
\[
\forall  u\in BV(\R^2; [0, 1]) \, {_{BV(\R^2)^*}}\langle \xi, u-u_0\rangle_{BV(\R^2)} \leq 0
\]
and
\[
\nabla^* v + \lambda \big( |1-f|-|f| \big) + \xi = 0.
\]

\end{lemma}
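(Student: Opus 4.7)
The plan is to characterize minimizers of $\widehat{F_2}$ through the convex-analytic condition $0 \in \partial \widehat{F_2}(u_0)$, which is equivalent to $u_0$ being a minimizer whenever $\widehat{F_2}(u_0) < \infty$; the latter forces $u_0 \in BV(\R^2; [0,1])$, giving one direction of the claimed characterization for free. I would then decompose $\widehat{F_2} = (\|\cdot\|_a \circ \nabla) + L + \zeta_{BV(\R^2;[0,1])}$, where $L(u) := \lambda \int_{\R^2} (|1-f|-|f|) u$, and compute each piece of the subdifferential separately.

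For the linear term, note that the reverse triangle inequality gives $\||1-f|-|f|\|_{L^\infty(\R^2)} \leq 1$, so $L$ is a continuous linear functional on $BV(\R^2) \subset L^1(\R^2)$ whose subdifferential at every point is the singleton $\{\lambda(|1-f|-|f|)\}$, viewed as an element of $BV(\R^2)^*$ via integration. The anisotropic total variation piece is handled exactly as in the proof of Lemma~\ref{lem:minimimpliesvectorf}: applying \cite[Chapter I, Proposition 5.7]{EkelandTemam76} gives $\partial(\|\cdot\|_a \circ \nabla)(u_0) = \nabla^* \partial\|\cdot\|_a(\nabla u_0)$, and elements of $\partial\|\cdot\|_a(\nabla u_0)$ are by definition precisely those $v \in L^\infty(\R^2;\R^2)$ satisfying (\ref{eq:measurecondtion}). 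Finally, $\partial \zeta_{BV(\R^2;[0,1])}(u_0)$ is, by definition of the subdifferential of an indicator function of a convex set, the normal cone at $u_0$: the set of $\xi \in BV(\R^2)^*$ such that ${_{BV(\R^2)^*}}\langle \xi, u - u_0 \rangle_{BV(\R^2)} \leq 0$ for every $u \in BV(\R^2;[0,1])$.

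To combine the three pieces I would iterate the additivity rule \cite[Chapter I, Proposition 5.6]{EkelandTemam76}. The first two summands are finite and continuous on all of $BV(\R^2)$, so the sum rule applies unconditionally to them; to incorporate the indicator $\zeta_{BV(\R^2;[0,1])}$ it suffices that the continuous summand $(\|\cdot\|_a \circ \nabla) + L$ be finite at some point of $\dom \zeta_{BV(\R^2;[0,1])} = BV(\R^2;[0,1])$, which clearly holds (e.g.\ at $u=0$). This yields
\[
\partial \widehat{F_2}(u_0) = \nabla^* \partial\|\cdot\|_a(\nabla u_0) + \{\lambda(|1-f|-|f|)\} + N_{BV(\R^2;[0,1])}(u_0),
\]
and $0 \in \partial \widehat{F_2}(u_0)$ is then precisely the assertion that suitable $v$ and $\xi$ exist with $\nabla^* v + \lambda(|1-f|-|f|) + \xi = 0$ in $BV(\R^2)^*$.

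The main obstacle I anticipate is the careful application of the Moreau--Rockafellar sum rule in the presence of the indicator, since indicator functionals are not continuous; this is handled by the qualification condition above. Beyond that, the argument is essentially bookkeeping: most of the heavy subdifferential work (identifying elements of $\partial\|\cdot\|_a(\nabla u_0)$ with the vector fields satisfying (\ref{eq:measurecondtion}), and the transposition $\nabla^*$) has already been carried out in Lemma~\ref{lem:minimimpliesvectorf}, so the present proof can cite those computations rather than repeat them, and no analogue of the $L^\infty$-representation argument from that earlier proof is needed because the middle term here is linear with an explicit $L^\infty$ coefficient.
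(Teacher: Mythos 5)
Your proposal is correct and follows essentially the same route as the paper: the same three-term decomposition of $\widehat{F_2}$, the subdifferential of the anisotropic term via the composition rule and the computations already done in Lemma~\ref{lem:minimimpliesvectorf}, the singleton subdifferential of the linear fidelity term, the normal-cone description of $\partial\zeta_{BV(\R^2;[0,1])}(u_0)$, and the Moreau--Rockafellar sum rule justified by continuity of the two non-indicator terms. The only difference is cosmetic: the paper invokes G\^ateaux differentiability (\cite[Chapter I, Proposition 5.3]{EkelandTemam76}) for the linear term where you simply observe it is a continuous linear functional, which amounts to the same thing.
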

\begin{proof}
Following Lemma~\ref{lem:minimimpliesvectorf}, we consider the consequences of  $0 \in  \partial \widehat{F_2}(u_0)$, focusing on each of the three terms separately. The continuity of $F_2$ ensures that even though $\zeta_{BV(\R^2; [0, 1])}$ is not continuous with respect to the topology on $BV(\R^2)$ we can still use \cite[Chapter I, Proposition 5.6]{EkelandTemam76} to compute the subdifferential of each term separately and then add them to find $\partial \widehat{F_2}(u_0)$.

The subdifferential of the functional $BV(\R^2) \to \R: u \mapsto \int_{\R^2} |u_x| + |u_y|$ 
was analyzed  in Lemma~\ref{lem:minimimpliesvectorf} where we found that 
 $\chi \in \partial (\|\cdot\|_a \circ \nabla)(u_0)$ if and only if $\chi = \nabla^* v$ for a $v\in L^\infty(\R^2; \R^2)$ satisfying 
\[
\forall \mu \in \mathcal{M}_2(\R^2)\quad  {_{\mathcal{M}_2(\R^2)^*}}\langle v, \mu - \nabla u_0 \rangle_{\mathcal{M}_2(\R^2)} + \|\nabla u_0\|_a \leq \|\mu\|_a.
\]
Since the second term in $F_2$, i.e. the functional $BV(\R^2) \to \R: u \mapsto \int_{\R^2} \big(|1-f| -|f|\big) u$, is G{\^a}teaux differentiable, we find (cf. \cite[Chapter I, Proposition 5.3]{EkelandTemam76}) that its subdifferential is the singleton $\{(|1-f| - |f|)\}$. To be precise, the subdifferential at $u_0$ has exactly one element $\psi$ which satisfies, for all $u\in BV(\R^2)$,  
\begin{equation}\label{eq:psi=1-f-f}
{_{BV(\R^2)^*}}\langle \psi, u \rangle_{BV(\R^2)} = \int_{\R^2} \big(|1-f| - |f|\big) \, u.
\end{equation}

Turning to the last term in $\widehat{F_2}$, we have 
\begin{align*}
\xi \in \partial \zeta_{BV(\R^2; [0, 1])}(u_0) \Leftrightarrow &\Big[\zeta_{BV(\R^2; [0, 1])}(u_0) < \infty \text{ and } \forall u\in BV(\R^2) \,\\ &\hspace{0.3cm} {_{BV(\R^2)^*}}\langle \xi, u-u_0\rangle_{BV(\R^2)} + \zeta_{BV(\R^2; [0, 1])}(u_0) \leq \zeta_{BV(\R^2; [0, 1])}(u).\Big]
\end{align*}
The condition $\zeta_{BV(\R^2; [0, 1])}(u_0) < \infty$ is equivalent to $u_0\in BV(\R^2; [0, 1])$. For such $u_0$ we have $\zeta_{BV(\R^2; [0, 1])}(u_0) = 0$ and thus the second condition becomes
\[
\forall u\in BV(\R^2) \, {_{BV(\R^2)^*}}\langle \xi, u-u_0\rangle_{BV(\R^2)} \leq \zeta_{BV(\R^2; [0, 1])}(u).
\]
For $u\in BV(\R^2)\setminus BV(\R^2; [0, 1])$ this condition is trivially satisfied, while for $u\in BV(\R^2; [0, 1])$ we have $\zeta_{BV(\R^2; [0, 1])}(u)=0$. We deduce
\[
\xi \in \partial \zeta_{BV(\R^2; [0, 1])}(u_0) \Leftrightarrow u_0 \in BV(\R^2; [0, 1]) \text{ and } \forall  u\in BV(\R^2; [0, 1]) \, {_{BV(\R^2)^*}}\langle \xi, u-u_0\rangle_{BV(\R^2)} \leq 0.
\]

Adding the three computed subdifferentials gives the result.
\end{proof}

\begin{remark}
It is instructive to note that if $u \in BV(\R^2; [0, 1])$ is a minimizer of $\widehat{F_2}$ the condition
\[
\nabla^* v + \lambda \big( |1-f|-|f| \big) + \xi = 0
\]
from Lemma~\ref{lem:subdifs} does not allow us to conclude that $\nabla^* v =-\dvg v \in L^\infty(\R^2)$, as we could conclude in Theorem~\ref{thm:minimizervectorfield}.
\end{remark}

We now turn to a sufficient condition for $u$ to be a minimizer which can be adapted to deal with binary $u$ as well. 
Here more regularity on the vector field is  required as explained in the next lemma, which is based upon \cite[Proposition 3.3]{EsedogluOsher04} and \cite[Lemma 5.5]{ChanEsedoglu05}.

\begin{lemma}\label{lem:whenminimF2}
$u_0\in BV(\R^2)$ is a minimizer of $\widehat{F_2}$ over $BV(\R^2)$ if $u_0\in BV(\R^2; [0, 1])$ and in addition there exists a vector field $v\in \mathcal{V}(u_0)$ and for all $u\in BV(\R^2; [0, 1])$
\begin{equation}\label{eq:nonposparing}
\int_{\R^2} \Big(\text{div}\, v - \lambda \big(|1-f|-|f|\big) \Big) (u-u_0) \leq 0.
\end{equation}
If the inequality in (\ref{eq:nonposparing}) is strict, $u_0$ is the unique minimizer of $\widehat{F_2}$ over $BV(\R^2)$.

%

\end{lemma}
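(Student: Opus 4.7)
The plan is to mirror the sufficiency argument of Theorem~\ref{thm:minimizervectorfield}, using the vector field $v$ to give a lower bound on $F_2(u)$ that matches $F_2(u_0)$ up to the quantity controlled by \pref{eq:nonposparing}. The extra ingredient compared with the $F_1$ case is the indicator $\zeta_{BV(\R^2;[0,1])}$, but this is painless: for any $u\in BV(\R^2)\setminus BV(\R^2;[0,1])$ we have $\widehat{F_2}(u)=+\infty \ge \widehat{F_2}(u_0)$ automatically, so we only need to compare $\widehat{F_2}(u_0)=F_2(u_0)$ to $F_2(u)$ for competitors $u\in BV(\R^2;[0,1])$.

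For such a $u$, I would first apply Corollary~\ref{cor:anisotvLinfty} to the vector field $v\in \mathcal{V}(u_0)$, which (together with condition~\ref{item:condition1} in \pref{eq:setofvfields}) gives
\[
\int_{\R^2} |u_x|+|u_y| \,\ge\, -\int_{\R^2} u\,\dvg v.
\]
Condition~\ref{item:condition3} in the definition of $\mathcal{V}(u_0)$ turns the analogous inequality for $u_0$ into an equality, namely
\[
\int_{\R^2} |u_{0_x}|+|u_{0_y}| \,=\, -\int_{\R^2} u_0\,\dvg v.
\]
Adding the $L^1$--type terms from $F_2$ and subtracting, I obtain
\[
F_2(u)-F_2(u_0) \,\ge\, -\int_{\R^2} (u-u_0)\,\dvg v \,+\, \lambda \int_{\R^2} \bigl(|1-f|-|f|\bigr)(u-u_0)
\,=\,-\int_{\R^2}\Bigl(\dvg v-\lambda\bigl(|1-f|-|f|\bigr)\Bigr)(u-u_0),
\]
which is nonnegative by the hypothesis \pref{eq:nonposparing}. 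This establishes that $u_0$ minimizes $\widehat{F_2}$.

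For the uniqueness statement, if the inequality in \pref{eq:nonposparing} is strict for every $u\in BV(\R^2;[0,1])$ with $u\ne u_0$, then the last chain of inequalities becomes strict for such $u$, yielding $\widehat{F_2}(u)>\widehat{F_2}(u_0)$; combined with the trivial case $u\notin BV(\R^2;[0,1])$, this gives uniqueness.

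The only genuine subtlety I anticipate is making sure that the pairing $\int u\,\dvg v$ is well defined and that Corollary~\ref{cor:anisotvLinfty} applies verbatim, since $\dvg v$ is only known to lie in $L^\infty(\R^2)$ (condition~\ref{item:condition2}) and $u\in BV(\R^2)\subset L^1_{\mathrm{loc}}(\R^2)$; however, membership in $BV(\R^2;[0,1])$ with compact relevant variation, together with the corollary from the earlier section, renders this routine. Thus the argument is essentially a one-line convex-duality bound, with the hypothesis \pref{eq:nonposparing} playing exactly the role that $\lambda\ge\|\dvg v\|_{L^\infty}$ played in Theorem~\ref{thm:minimizervectorfield}.
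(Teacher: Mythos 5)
Your proof is correct and follows essentially the same route as the paper: both use Corollary~\ref{cor:anisotvLinfty} for the competitor $u$, condition~\ref{item:condition3} of $\mathcal{V}(u_0)$ as an equality for $u_0$, and hypothesis \pref{eq:nonposparing} to close the estimate, with your version merely rearranging the paper's add-and-subtract computation into a direct bound on $F_2(u)-F_2(u_0)$. The uniqueness argument and the dispatch of competitors outside $BV(\R^2;[0,1])$ also match the paper's reasoning.
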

\begin{proof}
Because $u_0\in BV(\R^2; [0, 1])$ we have $\widehat{F_2}(u_0) = F_2(u_0) < \infty$. Therefore we have to show that $F_2(u_0) \leq F_2(u)$ for all $u\in BV(\R^2; [0, 1])$. By (\ref{eq:nonposparing}) and condition~\ref{item:condition3} in (\ref{eq:setofvfields}) we compute
\begin{align*}
F_2(u) &= \int_{\R^2} |u_x|+|u_y| + \lambda \int_{\R^2} (|1-f|-|f|) u\\
&= \int_{\R^2} |u_x|+|u_y| + \int_{\R^2} (u_0-u) \big(\text{div}\, v - \lambda (|1-f|-|f|)\big)\\ &\hspace{1cm} + \int_{\R^2} u_0 \big( \lambda (|1-f|-|f|) - \text{div}\, v \big) + \int_{\R^2} u\, \text{div}\, v\\
&\geq \int_{\R^2} |u_x|+|u_y| + \int_{\R^2} u\, \text{div}\, v + \int_{\R^2} u_0 \big( \lambda (|1-f|-|f|) - \text{div}\, v \big)\\
&= \int_{\R^2} |u_x|+|u_y| + \int_{\R^2} u\, \text{div}\, v + \int_{\R^2} |u_{0_x}| + |u_{0_y}| + \lambda \int_{\R^2} u_0\, (|1-f|-|f|)\\
&= \int_{\R^2} |u_x|+|u_y| + \int_{\R^2} u\, \text{div}\, v + F_2(u_0)\\
&\geq F_2(u_0).
\end{align*}
The last inequality follows since by Corollary~\ref{cor:anisotvLinfty} we have
\[
\int_{\R^2} |u_x|+|u_y| \geq -\int_{\R^2} u\, \text{div}\, v.
\]
If the inequality in (\ref{eq:nonposparing}) is strict, then so is the first inequality in the computation above, and hence $F_2(u_0)>F_2(u)$.
\end{proof}

\begin{corol}\label{F2-barcode}
Let $f\in \mathcal{B}$ be the measured signal in $\overline{F_1}$. Then $f$ is a minimizer of $\overline{F_1}$ over $BV(\R^2; \{0,1\})$ if there exists a vector field $v\in \mathcal{V}(f)$ such that
\begin{equation}\label{eq:samecondition4}
\lambda \geq \max\left( -\dvg v|_{\supp f}, \dvg v|_{(\supp f)^c}\right).
\end{equation}
If the inequality in (\ref{eq:samecondition4}) is strict, then $f$ is the unique minimizer of $\overline{F_1}$ over $BV(\R^2; \{0,1\})$.
\end{corol}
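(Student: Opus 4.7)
The plan is a two-step argument: first use Lemma~\ref{lem:whenminimF2} to show that $f$ minimizes $\widehat{F_2}$ (equivalently $F_2$ over $BV(\R^2;[0,1])$), and then transfer this minimization down to $\overline{F_1}$ over the smaller class $BV(\R^2;\{0,1\})$.

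For the first step I would verify hypothesis (\ref{eq:nonposparing}) with $u_0=f$. Because $f\in\mathcal{B}$ takes values in $\{0,1\}$, one has $|1-f|-|f|=-1$ on $\supp f=\{f=1\}$ and $|1-f|-|f|=+1$ on $(\supp f)^c=\{f=0\}$. For any $u\in BV(\R^2;[0,1])$, $u-f\le 0$ on $\supp f$ while $u-f=u\ge 0$ on $(\supp f)^c$. Splitting the integral accordingly gives
\[
\int_{\R^2}\bigl(\dvg v-\lambda(|1-f|-|f|)\bigr)(u-f)=\int_{\supp f}(\dvg v+\lambda)(u-1)+\int_{(\supp f)^c}(\dvg v-\lambda)\,u.
\]
The hypothesis $\lambda\ge\max(-\dvg v|_{\supp f},\dvg v|_{(\supp f)^c})$ makes $\dvg v+\lambda\ge 0$ on $\supp f$ and $\dvg v-\lambda\le 0$ on $(\supp f)^c$, so each integrand is nonpositive and (\ref{eq:nonposparing}) holds. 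Lemma~\ref{lem:whenminimF2} then yields that $f$ minimizes $F_2$ over $BV(\R^2;[0,1])$.

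For the second step I would use the elementary identity, valid whenever $f\in\{0,1\}$ and $u\in[0,1]$, that $|u-f|=u(|1-f|-|f|)+f$, which implies
\[
\overline{F_1}(u)=F_2(u)+\lambda\int_{\R^2} f\qquad\text{for all } u\in BV(\R^2;\{0,1\}).
\]
Since $BV(\R^2;\{0,1\})\subset BV(\R^2;[0,1])$, minimality of $f$ for $F_2$ on the larger class immediately yields minimality of $f$ for $\overline{F_1}$ on the smaller one. (Alternatively, Lemma~\ref{lem:convexify} reaches the same conclusion by noting that the superlevel sets of $f=\chi_S$ satisfy $E(t)=S$ for all $t\in(0,1]$, so $\chi_{E(t)}=f$.)

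For uniqueness under the strict hypothesis, the same decomposition is strictly negative whenever $u\ne f$ in $BV(\R^2;[0,1])$: equality in the pointwise bounds would force $u=1$ on $\supp f$ and $u=0$ on $(\supp f)^c$, i.e.\ $u=f$. The strict-inequality clause of Lemma~\ref{lem:whenminimF2} then makes $f$ the unique minimizer of $F_2$ over $BV(\R^2;[0,1])$, and the constant-difference identity propagates this uniqueness to $\overline{F_1}$ on $BV(\R^2;\{0,1\})$. The argument is essentially bookkeeping; the one place that deserves a moment's care is checking in the strict case that the pointwise product being zero really does pin down $u=f$ almost everywhere on both $\supp f$ and its complement.
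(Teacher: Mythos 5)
Your proposal is correct, and its core coincides with the paper's proof: both verify hypothesis (\ref{eq:nonposparing}) of Lemma~\ref{lem:whenminimF2} with $u_0=f$ by the same splitting of the integral over $\supp f$ and $(\supp f)^c$, using $|1-f|-|f|=\mp1$ and the sign of $u-f$ on those sets, so that (\ref{eq:samecondition4}) makes both pieces nonpositive. The one place you diverge is the transfer from $F_2$ to $\overline{F_1}$: the paper invokes Lemma~\ref{lem:convexify} (and, for uniqueness, the remark that a binary $f$ has all super level sets equal), whereas you use the pointwise identity $|u-f|=u\left(|1-f|-|f|\right)+f$, valid for $f\in\{0,1\}$ and $u\in[0,1]$, so that $\overline{F_1}$ and $F_2$ differ by the constant $\lambda\int_{\R^2}f$ on $BV(\R^2;\{0,1\})$ (finite since $f\in\mathcal{B}$ has bounded support). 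This is a genuinely cleaner mechanism for the uniqueness clause: Lemma~\ref{lem:convexify} as stated only produces minimizers of $\overline{F_1}$ from minimizers of $F_2$, so the paper's uniqueness transfer implicitly relies on exactly the constant-difference observation you make explicit. You also spell out why the strict inequality in (\ref{eq:samecondition4}) forces strictness in (\ref{eq:nonposparing}) whenever $u\neq f$ (since $\dvg v+\lambda>0$ a.e.\ on $\supp f$ and $\dvg v-\lambda<0$ a.e.\ on the complement, equality pins down $u=f$ a.e.), a point the paper passes over with ``it follows by the computation above''; your reading of the strict clause of Lemma~\ref{lem:whenminimF2} as ``strict for $u\neq u_0$'' matches the paper's intent. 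No gaps.
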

\begin{proof}
First we use Lemma~\ref{lem:whenminimF2} to prove that $f$ is a minimizer of $\widehat{F_2}$ over $BV(\R^2)$, and hence of ${F_2}$ over $BV(\R^2; [0,1])$. It then follows from Lemma~\ref{lem:convexify} that $f$ is also a minimizer of $\overline{F_1}$ over $BV(\R^2; \{0, 1\})$.
To this end, we show that the conditions of Lemma~\ref{lem:whenminimF2} are fulfilled with $u_0=f$. Most of them follow directly, only (\ref{eq:nonposparing}) in Lemma~\ref{lem:whenminimF2} needs some explanation.
Let $w\in BV(\R^2; [0,1])$. Since $f\in \mathcal{B}$ we have
\[
|1-f| -|f| = \left\{ \begin{array}{ll} -1 & \text{on } \supp f,\\ 1 & \text{on } (\supp f)^c, \end{array}\right. \quad \text{and} \quad w-f \left\{ \begin{array}{ll} \leq 0 & \text{on } \supp f,\\ \geq 0 & \text{on } (\supp f)^c. \end{array}\right.
\]
Because $\lambda \geq \max\left( -\dvg v|_{\supp f}, \dvg v|_{(\supp f)^c}\right)$ we find 
\begin{align*}
&\int_{\R^2} \Big(\text{div}\, v - \lambda \big(|1-f|-|f|\big) \Big) (w-f) =\\
& \hspace{2cm} -\int_{\supp f} \big(\text{div}\, v + \lambda \big) (f-w) + \int_{( \supp f)^c} \big(\text{div}\, v - \lambda \big) (w-f)
 \leq 0.
\end{align*}

Finally, if the inequality in (\ref{eq:samecondition4}) is strict it follows by the computation above that the inequality in (\ref{eq:nonposparing}) is strict as well and hence by Lemma~\ref{lem:whenminimF2} $f$ is the unique minimizer of $\widehat{F_2}$ over $BV(\R^2)$. Because $f$ is binary all its super level sets are the same and it follows by Lemma~\ref{lem:convexify} that $f$ is the unique minimizer of $\overline{F_1}$.

\end{proof}

Note that, as expected,  the condition on $\lambda$ we found for minimizing $F_1$, i.e. $\lambda \geq \|\dvg v\|_{L^\infty(\R^2)}$, implies condition (\ref{eq:samecondition4}). 
One could ask whether  the new condition on $\lambda$ is weaker in practice than the old one. This is only the case if $\dvg v$ takes either large positive values on $\supp f$ or large negative values on $(\supp f)^c$, which is not true for the two examples of vector fields $v$ we have seen in Theorem~\ref{thm:vectorfieldforbar code} and Remark~\ref{rem:differentvectorfield}. 

\section{Minimizers of $F_3$}\label{sec:minF3} 

\begin{lemma}\label{lem:F3mingivesvfield}
Let 
$K$ be as in (\ref{eq:whatisK}). If $u_0\in BV(\R^2)$ is a minimizer of $F_3$ over $BV(\R^2)$, then there exists a vector field $v\in \mathcal{V}(u_0)$ such that for all $w\in BV(\R^2)$,
\begin{equation}\label{eq:condwKw}
\left| \int_{\R^2} w\, \dvg v\right| \leq \lambda \int_{\R^2} |Kw|.
\end{equation}

\end{lemma}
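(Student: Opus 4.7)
The strategy closely parallels the proof of Lemma~\ref{lem:minimimpliesvectorf}, with the bounded linear operator $K$ absorbed via the chain rule for subdifferentials. Write $F_3 = \|\cdot\|_a\circ\nabla + \lambda(G\circ K)$ where $G(w):=\|w-f\|_{L^1(\R^2)}$. Since both summands are continuous on $BV(\R^2)$, Proposition~I.5.6 of Ekeland--T\'emam gives $0\in\partial F_3(u_0) = \partial(\|\cdot\|_a\circ\nabla)(u_0) + \lambda\,\partial(G\circ K)(u_0)$. The first subdifferential is treated verbatim as in Lemma~\ref{lem:minimimpliesvectorf}: by Proposition~I.5.7 it equals $\nabla^*\partial\|\cdot\|_a(\nabla u_0)$, yielding a field $v\in L^\infty(\R^2;\R^2)$ with $|v|_\infty\leq 1$ a.e.\ and $\int v\cdot d\nabla u_0 = \|\nabla u_0\|_a$; approximation of $v$ by smooth fields via Lemma~\ref{lem:approxv} delivers conditions~\ref{item:condition1} and~\ref{item:condition3} of $\mathcal{V}(u_0)$.

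For the fidelity term, apply Proposition~I.5.7 again with the continuous linear map $K:L^1(\R^2)\to L^1(\R^2)$ and the continuous convex $G$: $\partial(G\circ K)(u_0) = K^*\partial G(Ku_0)$, where $\partial G(Ku_0)$ consists of the familiar sign-like functions $\psi\in L^\infty(\R^2)$ with $|\psi|\leq 1$ a.e.\ and $\psi=\operatorname{sgn}(Ku_0-f)$ on $\{Ku_0\ne f\}$. Combining, in the appropriate dual space,
\[
\nabla^* v + \lambda K^*\psi = 0.
\]
Because $\psi\in L^\infty(\R^2)$ and $K^*:L^\infty(\R^2)\to L^\infty(\R^2)$ is bounded, the identity shows $\nabla^* v = -\lambda K^*\psi \in L^\infty(\R^2)$. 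Reasoning as in equation~(\ref{eq:nablastarisminusdiv}), this distribution coincides with $-\dvg v$, giving condition~\ref{item:condition2} and thus $v\in\mathcal{V}(u_0)$. Testing the identity against $w\in C_c^\infty(\R^2)$ gives
\[
\int_{\R^2} w\,\dvg v \;=\; \lambda\int_{\R^2} \psi\cdot Kw,
\]
and the bound $|\psi|\leq 1$ immediately yields (\ref{eq:condwKw}) on $C_c^\infty$.

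The main obstacle is the final extension of (\ref{eq:condwKw}) from $C_c^\infty(\R^2)$ to all $w\in BV(\R^2)$, together with the bookkeeping of function spaces needed to justify the chain rule with $K$ viewed as an operator on $L^1$ while the variational problem is posed on $BV(\R^2)$. The extension is handled by density: approximate $w$ in $L^1$ by smooth compactly supported $w_n$; the left-hand side is $L^1$-continuous because $\dvg v \in L^\infty(\R^2)$, and the right-hand side is $L^1$-continuous because $K$ is bounded on $L^1(\R^2)$. Everything else is a direct adaptation of Lemma~\ref{lem:minimimpliesvectorf}; the only genuinely new feature is the appearance of $K^*\psi$ in place of $\psi$, which is exactly what produces $Kw$ rather than $w$ on the right-hand side of~(\ref{eq:condwKw}).
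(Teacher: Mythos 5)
Your proposal is correct, but it handles the fidelity term by a genuinely different route than the paper. The paper never peels off $K$ via a chain rule: it treats the composite $u \mapsto \int_{\R^2}|Ku-f|$ as a single convex functional on $BV(\R^2)$, takes an abstract $\psi \in BV(\R^2)^*$ from its subdifferential, gets the key bound $\left| {_{BV(\R^2)^*}}\langle \psi,w \rangle_{BV(\R^2)} \right| \leq \int_{\R^2} |Kw| \leq \int_{\R^2}|w|$ for all $w\in BV(\R^2)$ directly from the subdifferential inequality with $u=\pm w+u_0$ together with Lemma~\ref{lem:K}, and then reruns the scaling/representation machinery of Lemma~\ref{lem:minimimpliesvectorf} (the $W^{1,1}(\R^2)^*$ representation and the argument that $q=0$) to conclude that $\psi$ is an $L^\infty(\R^2)$ function, so that $-\dvg v + \lambda\psi = 0$ and the bound on $\psi$ combine to give (\ref{eq:condwKw}). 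You instead invoke the chain rule $\partial(G\circ K)(u_0) = K^*\partial G(Ku_0)$ with the explicitly known subdifferential of the $L^1$-norm, which hands you $\psi\in L^\infty(\R^2)$ with $|\psi|\leq 1$ for free, replaces the scaling argument by the adjoint identity $\int_{\R^2} w\,\dvg v = \lambda\int_{\R^2}\psi\, Kw$, and finishes by an $L^1$-density step; this buys a shorter, more transparent derivation of condition~2 of $\mathcal{V}(u_0)$ and of (\ref{eq:condwKw}) (and even the extra information that $\psi$ is a sign of $Ku_0-f$, which is not needed), at the price of justifying the chain rule with $K$ viewed as a continuous linear map $BV(\R^2)\to L^1(\R^2)$ through the continuous embedding $BV(\R^2)\hookrightarrow L^1(\R^2)$ --- a point you flag and which does hold, so the step is sound. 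Both arguments then obtain conditions~1 and~3 identically, by restricting the subdifferential of $\|\cdot\|_a$ to $L^1$ densities and by Lemma~\ref{lem:approxv}; the only blemish in your write-up is attributing condition~1 to the approximation lemma (condition~1 comes from testing against $L^1(\R^2;\R^2)\subset\mathcal{M}_2(\R^2)$, and Lemma~\ref{lem:approxv} can only be applied once condition~2 is in hand, which you establish later), but this is a matter of ordering in the exposition rather than a gap.
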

\begin{proof}
The proof is almost identical to the proof of Lemma~\ref{lem:minimimpliesvectorf}. Here we point out the differences. The subdifferential of the anisotropic total variation is computed as before and hence the existence of a $v\in L^\infty(\R^2; \R^2)$ satisfying condition~\ref{item:condition1} in (\ref{eq:setofvfields}) follows as before.
For the subdifferential of the fidelity term at $u_0$, we find
\begin{align*}
\psi \in \partial \left(\int_{\R^2} |K\cdot - f|\right)(u_0) &\Leftrightarrow \int_{\R^2} |Ku_0 - f| < \infty \text{ and } \forall u\in BV(\R^2)\,\\
&\hspace{0.8cm} {_{BV(\R^2)^*}}\langle \psi, u-u_0 \rangle_{BV(\R^2)} \,\,  + \,\, \int_{\R^2} |Ku_0-f| \, \leq \, \int_{\R^2} |Ku-f|.
\end{align*}
Since $Ku_0\in L^1(\R^2)$, the condition $\displaystyle \int_{\R^2} |Ku_0 - f| < \infty$ is trivially satisfied.
Let $w\in BV(\R^2)$ and choose $u=\pm w+u_0$ in the second inequality in the right hand side above. Then we compute
\[
\pm {_{BV(\R^2)^*}}\langle \psi, w \rangle_{BV(\R^2)} \,\,  + \,\, \int_{\R^2} |K u_0-f| \leq \int_{\R^2} |\pm Kw + Ku_0 - f| \leq \int_{\R^2} |Kw| \,\, + \,\, \int_{\R^2} |Ku_0-f|,
\]
Hence by Lemma~\ref{lem:K}
\begin{equation}\label{eq:wKw}
\left|  {_{BV(\R^2)^*}}\langle \psi,w \rangle_{BV(\R^2)}  \right| \, \leq\,  \int_{\R^2} |Kw| \leq \int_{\R^2} |w|,
\end{equation}
for all $w \in BV(\R^2)$. 
The scaling arguments following equation (\ref{eq:psiulequ}) in the proof of Theorem~\ref{thm:minimizervectorfield}
now follow as before and we find that
as a distribution $\psi\in BV(\R^2)^*$ can be represented by $\psi \in L^\infty(\R^2)$, $\displaystyle -\dvg v + \lambda \psi = 0$ as distributions, and by density of $C_c^\infty(\R^2)$ in $L^1(\R^2)$
\[
\lambda\, \int_{\R^2} \psi \, w \, dx  = \int_{\R^2} w\, \dvg v
\]
for all $w\in L^1(\R^2)$. 
Combining this with the first inequality in (\ref{eq:wKw}) gives (\ref{eq:condwKw}).
From here on all the arguments follow as in the proof of Lemma~\ref{lem:minimimpliesvectorf}. 

\end{proof}

\begin{remark}\label{rem:wKWimplieslambda}
It is noteworthy that (\ref{eq:condwKw}) implies $\displaystyle \lambda \geq \|\dvg v\|_{L^\infty(\R^2)}$ as follows: By Lemma~\ref{lem:K} we have $\displaystyle \int_{\R^2} |Kw| \leq \int_{\R^2} |w|$ and hence (\ref{eq:condwKw}) allows us to repeat the argument in (\ref{eq:getlambdabound}).
\end{remark}

\begin{theorem}\label{thm:F3vectorfield}
Let $u_0\in BV(\R^2)$ and $K$ be as in (\ref{eq:whatisK}).
Define $f:=Ku_0$. Then $u_0$ is a minimizer of $F_3$ over $BV(\R^2)$ if and only if there exists a vector field $v\in \mathcal{V}(u_0)$ such that, for all $w\in BV(\R^2)$, (\ref{eq:condwKw}) holds.
%
%
Moreover, if such a vector field exist and the inequality in (\ref{eq:condwKw}) is strict, then $u_0$ is the unique minimizer of $F_3$ over $BV(\R^2)$.

\end{theorem}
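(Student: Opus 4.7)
The ``only if'' direction is exactly the content of Lemma~\ref{lem:F3mingivesvfield}, so the work reduces to establishing the ``if'' direction together with the uniqueness refinement. My plan is to mimic the lower-bound calculation used in the proof of Theorem~\ref{thm:minimizervectorfield}, with the bound (\ref{eq:condwKw}) taking over the role previously played by $\lambda \geq \|\dvg v\|_{L^\infty(\R^2)}$.

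Concretely, fix an arbitrary $u\in BV(\R^2)$ and set $w := u - u_0$. Since $v\in \mathcal{V}(u_0)$ satisfies conditions~\ref{item:condition1} and \ref{item:condition2} in (\ref{eq:setofvfields}), Corollary~\ref{cor:anisotvLinfty} applies, giving
\[
\int_{\R^2} |u_x| + |u_y| \,\,\geq\,\, -\int_{\R^2} u\,\dvg v \,\,=\,\, -\int_{\R^2} u_0\,\dvg v \,\,-\,\, \int_{\R^2} w\,\dvg v.
\]
Condition~\ref{item:condition3} in (\ref{eq:setofvfields}) identifies the first term on the right with $\int_{\R^2} |u_{0_x}| + |u_{0_y}|$. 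Because $f = Ku_0$, the fidelity term reads $\lambda\|Ku - f\|_{L^1(\R^2)} = \lambda \int_{\R^2}|Kw|$, and the fidelity term of $F_3(u_0)$ vanishes. Invoking (\ref{eq:condwKw}) in the form $-\int_{\R^2} w\,\dvg v \geq -\lambda\int_{\R^2}|Kw|$ and adding the pieces yields
\[
F_3(u) \,\,\geq\,\, \int_{\R^2} |u_{0_x}|+|u_{0_y}| \,\,-\,\, \lambda\int_{\R^2}|Kw| \,\,+\,\, \lambda\int_{\R^2}|Kw| \,\,=\,\, F_3(u_0),
\]
as desired.

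For the uniqueness statement, note that if the inequality in (\ref{eq:condwKw}) is strict for every nonzero $w$, then applying it with $w = u - u_0$ for any $u \neq u_0$ turns the estimate above into a strict inequality, giving $F_3(u) > F_3(u_0)$. I anticipate no serious obstacle: the argument is essentially parallel to that of Theorem~\ref{thm:minimizervectorfield}, and (\ref{eq:condwKw}) is tailor-made so that the $K$-weighted fidelity contribution is exactly absorbed. The only point requiring attention is the two-sided (absolute value) use of (\ref{eq:condwKw}), which is what produces a lower bound on $-\int w\,\dvg v$ of the correct sign to cancel against $\lambda\int|Kw|$.
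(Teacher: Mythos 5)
Your proposal is correct and follows essentially the same route as the paper: the ``only if'' direction is delegated to Lemma~\ref{lem:F3mingivesvfield}, and the ``if'' direction and uniqueness are obtained by the same lower-bound computation via Corollary~\ref{cor:anisotvLinfty}, condition~\ref{item:condition3} in (\ref{eq:setofvfields}), and (\ref{eq:condwKw}) applied to $w = u - u_0$, exactly as in the paper's proof.
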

\begin{proof}
By Lemma~\ref{lem:F3mingivesvfield}, it suffices to prove  that  if  the vector field  $v$ exists then $u_0$ is a minimizer of $F_3$ over $BV(\R^2)$.
Let $u\in BV(\R^2)$. By Corollary~\ref{cor:anisotvLinfty} we have
\[
\int_{\R^2} |u_x| + |u_y| \geq -\int_{\R^2} u\, \dvg v.
\]
Also note that by condition~\ref{item:condition3} in (\ref{eq:setofvfields}) we have $F_3(u_0) = -\int_{\R^2} u_0\, \dvg v$.
Using these results we find
\begin{align*}
F_3(u) &\geq -\int_{\R^2} u\, \dvg v + \lambda \int_{\R^2} |Ku-f| \\
&= -\int_{\R^2} u_0\, \dvg v + \lambda \int_{\R^2} |Ku-f| - \int_{\R^2} (u-u_0) \, \dvg v\\
&= F_3(u_0) + \lambda \int_{\R^2} |K(u-u_0)| - \int_{\R^2} (u-u_0)\, \dvg v \\
&\geq F_3(u_0).
\end{align*}
The inequality 
follows directly from (\ref{eq:condwKw}).
Finally, if the inequality in (\ref{eq:condwKw}) is strict
the above inequality is strict: $F_3(u) > F_3(u_0)$.

\end{proof}

The following lemma shows that if the linear operator $K$ is given by convolution with a blurring kernel/PSF,  condition (\ref{eq:condwKw}) is satisfied for  $\lambda$ sufficiently  large.

\begin{lemma}\label{lem:Kisconvolution}
Let $u_0\in BV(\R^2)$ and $v\in \mathcal{V}(u_0)$. For all $w\in BV(\R^2)$ define $Kw:=k*w$ for some nonnegative kernel $k\in L^1(\R^2)$ satisfying $\int_{\R^2} k = 1$. If $\lambda \geq \|\dvg v\|_{L^\infty(\R^2)}$ condition (\ref{eq:condwKw}) holds. Moreover, if $\lambda > \|\dvg v\|_{L^\infty(\R^2)}$ the inequality in (\ref{eq:condwKw}) is strict.
\end{lemma}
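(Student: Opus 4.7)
The plan is to combine the pointwise bound $|\dvg v|\leq\lambda$ from the hypothesis with the positivity and unit-mass of the kernel $k$. The naive first estimate is
\[
\left|\int_{\R^2}w\,\dvg v\right|\;\leq\;\|\dvg v\|_{L^\infty(\R^2)}\int_{\R^2}|w|\;\leq\;\lambda\int_{\R^2}|w|,
\]
so the real content of the lemma is to replace $\int|w|$ by $\int|Kw|$ on the right. Note that this runs \emph{against} the contraction bound $\int|Kw|\leq\int|w|$ of Lemma~\ref{lem:K}, so the naive estimate alone is not enough and the convolution structure of $K$ must genuinely be used.

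To exploit that structure I would split $w=w^+-w^-$ into its positive and negative parts. Since $k\geq0$ and $\int k=1$, both $Kw^\pm=k*w^\pm$ are nonnegative, with $\int Kw^\pm=\int w^\pm$ by Fubini. Applying the naive bound separately on $w^+$ and $w^-$ yields
\[
\left|\int w\,\dvg v\right|\;\leq\;\lambda\int w^+\;+\;\lambda\int w^-\;=\;\lambda\int Kw^+\;+\;\lambda\int Kw^-.
\]
The remaining step is to reconcile $\int Kw^+ + \int Kw^-$ with $\int|Kw|=\int|Kw^+-Kw^-|$; the two differ by $2\int\min(Kw^+,Kw^-)$. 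I expect this reconciliation to be the main obstacle: the sums coincide only when $Kw^+$ and $Kw^-$ have essentially disjoint supports, whereas convolution by $k$ generically smears each across $\mathrm{supp}\,w^\pm+\mathrm{supp}\,k$. Resolving this cleanly likely requires a more refined device — for instance the dual characterization
\[
\int_{\R^2}|Kw|\;=\;\sup\Bigl\{\int_{\R^2}\phi\,Kw:\|\phi\|_{L^\infty(\R^2)}\leq1\Bigr\}\;=\;\sup\Bigl\{\int_{\R^2}w\,(\check k*\phi):\|\phi\|_{L^\infty(\R^2)}\leq1\Bigr\},
\]
with $\phi$ chosen to approximate $\mathrm{sgn}(\dvg v)$; or alternatively, a direct Fubini/translation argument rewriting $\int w\,\dvg v$ as an integral of $Kw$ against a new test function whose $L^\infty$ norm is still bounded by $\lambda$.

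For the strict inequality under $\lambda>\|\dvg v\|_{L^\infty(\R^2)}$, the naive estimate above is already strict whenever $w$ is not almost everywhere zero, and this strict gap propagates to (\ref{eq:condwKw}) once the bridging step is completed. In particular, if $w\not\equiv 0$ then $\int|w|>0$ and
\[
\|\dvg v\|_{L^\infty(\R^2)}\int_{\R^2}|w|\;<\;\lambda\int_{\R^2}|w|,
\]
so that, after the reconciliation step, strict inequality in (\ref{eq:condwKw}) follows.
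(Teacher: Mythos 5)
Your proposal does not reach the statement: the step you defer --- passing from $\lambda\bigl(\int_{\R^2} Kw^{+} + \int_{\R^2} Kw^{-}\bigr)$, or equivalently from $\lambda\int_{\R^2}|w|$, down to $\lambda\int_{\R^2}|Kw|$ --- is the entire content of the lemma, and you explicitly leave it open as ``the main obstacle,'' offering only two candidate devices (a dual characterization of $\int|Kw|$, or a Fubini/translation rewriting) without carrying either one out. Since $\int_{\R^2}|Kw|\le\int_{\R^2}|w|$ and $\int_{\R^2}Kw^{+}+\int_{\R^2}Kw^{-}\ge\int_{\R^2}|Kw|$, both of your estimates terminate on the wrong side of the desired inequality, so as written the argument is incomplete precisely where it matters; the claimed strictness in the last paragraph inherits the same gap.

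For comparison, the paper's proof takes a different and much shorter route: it applies $K$ to the function $w\,\dvg v$ itself, uses Lemma~\ref{lem:K} to obtain $\left|\int_{\R^2}w\,\dvg v\right|\le\int_{\R^2}\left|k*(w\,\dvg v)\right|$, and then invokes the bound $\int_{\R^2}\left|k*(w\,\dvg v)\right|\le\|\dvg v\|_{L^\infty(\R^2)}\int_{\R^2}|k*w|$ to close the chain. You should note, however, that the obstruction you isolated is genuine and bears on that last bound as well: the inequality that holds pointwise is $\left|k*(w\,\dvg v)(x)\right|\le\|\dvg v\|_{L^\infty(\R^2)}\,\bigl(k*|w|\bigr)(x)$, and integrating it returns only $\|\dvg v\|_{L^\infty(\R^2)}\int_{\R^2}|w|$ --- your naive estimate again --- while the sharper version with $|k*w|$ in place of $k*|w|$ can fail when $w$ changes sign in step with $\dvg v$ on a scale finer than the spread of $k$, so that $k*w$ nearly cancels while $k*(w\,\dvg v)$ does not. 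In short: your attempt is incomplete, but your diagnosis correctly locates the point at which any proof of (\ref{eq:condwKw}) must do real work, and at which the paper's own one-line justification is thinnest.
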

\begin{proof}
Because 
\[
\int_{\R^2}  \left|k \ast (w\, \dvg v) \right| \le  \|\dvg v\|_{L^\infty(\R^2)} \int_{\R^2}  |k \ast w|
\]
the convolution $k \ast (w\, \dvg v)$ is well-defined. By Lemma~\ref{lem:K} 
we conclude that
\[ \left| \int_{\R^2} w\, \dvg v\right|  \le   \int_{\R^2}  \left|k \ast (w\, \dvg v) \right| \le   \|\dvg v\|_{L^\infty(\R^2)}  \int_{\R^2} |k*w| \leq \lambda  \int_{\R^2} |Kw|.
\]
The last inequality is strict if $\lambda > \|\dvg v\|_{L^\infty(\R^2)}$.
\end{proof}

When we compare Theorem~\ref{thm:F3vectorfield} to Theorem~\ref{thm:minimizervectorfield},  we see that there are two differences in the conditions on the vector field $v$: (i) For Theorem~\ref{thm:F3vectorfield} condition~\ref{item:condition3} in (\ref{eq:setofvfields}) involves $z$ instead of $f$, as it did for Theorem~\ref{thm:minimizervectorfield}; (ii) The combined condition (\ref{eq:condwKw}) on $\lambda$ and $K$ from Theorem~\ref{thm:F3vectorfield} is stronger than the condition on $\lambda$ we had before in Theorem~\ref{thm:minimizervectorfield}. Hence with Lemma~\ref{lem:Kisconvolution} in mind, we can transfer all the results in Section~\ref{sec:Q2} which we derived for $F_1$ with $f\in \mathcal{B}_\omega$ to $F_3$ with $f=k*z$ for $z\in \mathcal{B}_\omega$. In particular, we have 
\begin{theorem}\label{barcode-conv}
Let $z\in \mathcal{B}_\omega$, let $Ku := k*u$ for $u\in BV(\R^2)$ and a nonnegative $k\in L^1(\R^2)$ satisfying $\int_{\R^2} k = 1$, and let $f=k*z$. If $\lambda \ge \frac4{\omega}$, 
then $z$ is a minimizer of $F_3$ over $BV(\R^2)$. If  the inequality  is strict, then $z$ is the unique minimizer of $F_3$ over $BV(\R^2)$.
\end{theorem}

\begin{erratum}\label{err:F3onlyfaithful}
In versions of this paper prior to arXiv:1007.1035v3, Theorem 6.5 contained a second part, which stated that: if $u_0\in BV(\R^2; \{0,1\})$, $K$ as in (\ref{eq:whatisK}), $f=Ku_0$, and $u_0$ is a minimizer of $F_3$ over $BV(\R^2)$, then $u_0\in \mathcal{B}$. This statement was based on an incorrect lemma that was included of Section~\ref{sec:Q2} and has now been removed, as explained in Erratum~\ref{err:counterexample}. Hence this second part of the theorem has now also been removed from the paper.
\end{erratum}

\begin{remark} It is  interesting to note in Theorem~\ref{barcode-conv} that the conditions on $\lambda$ for recovery of the bar code do not depend on properties of the blurring/deblurring kernel $k$. In \cite{ChoksiGennip09}, we considered the problem of deblurring of 1D bar codes. In 1D there is no difference between our anisotropic and the regular isotropic total variation, however we did employ an $L^2$ instead of $L^1$ fidelity term. While our main focus was for  deblurring and blurring kernels of different size, a corollary of our results was that when the two coincided (analogous to $F_3$), the functional was faithful to 
the clean bar code  for  blurring kernels with modest supports (on the order of the $X$-dimension). Numerical results suggested that this bound on the size of the blurring kernel was not optimal. 
Our Theorem \ref{barcode-conv} can readily be adopted to 1D barcodes, showing that regardless of the support size of the kernel  (or the standard deviation of an infinitely supported kernel), deconvolution with the same blurring kernel always recovers the barcode for $\lambda \geq \frac2\omega$ (via the vector field construction of Theorem~\ref{thm:vectorfieldforbar code} reduced to 1D) when using an $L^1$ fidelity term. However, as we note in Section 
\ref{sec:numerical}, this threshold value for $\lambda$ is very sensitive to noise and indeed this sensitivity grows with the support size of the blurring kernel. 
\end{remark} 

\section{Numerical Implementation}\label{Numerical-Implementation}
We have numerically tested the performances of the convex functionals  $F_i$.   As convex functionals,    their minimization can be approximated as finite dimensional convex optimization problems and global minimizers of these problems can be found using standard software packages.
We chose such an implementation because it allows us to find global minimizers without writing custom algorithms for each functional.   This allows for 
convenient comparison and experimentation with different functionals.   
In some cases, other options are available.  For example, for the functional $F_1$ gradient descent methods on a regularized functional can be used as in~\cite{ChanEsedoglu05},~\cite{ChanEsedogluNikolova06}.   We are not aware of direct methods for~$F_3$. 

The discretization is obtained by using standard forward finite differences and quadrature. 
Because of the particular form of the anisotropic total variation, each of the problems can be reformulated as a linear program, which is usually more tractable than a general convex optimization problem.   
By comparison, the standard isotropic total variation involves a term of the form $\sqrt{u_x^2 +u_y^2}$ which does not allow for a linear program reformulation, and leads to a more challenging optimization problem. 

Let us show how to discretize and reformulate $F_3$ as a linear program --  
the functionals $F_1$ and $F_2$ can be discretized and minimized in a similar manner (in fact $F_1$ is a special case of $F_3$, where the convolution matrix is the identity).
For a given small parameter $h$, approximate the minimizer $u(x,y)$ by a suitable (piecewise linear or piecewise constant) interpolation $u^h(x,y)$ of a grid function $U$, defined on the grid with spacing $h$, where $i,j = 1,\dots, N$.   
Here $u(ih, jh) = U_{i,j}$.
Next, approximate the terms $u_x$ and $u_y$ by finite differences
\begin{align*}
u_x(ih, jh) &= \frac{1}{h} \left ( U_{i+1, j} - U_{i,j} \right)\\
u_y(ih, jh) &= \frac{1}{h} \left ( U_{i, j+1} - U_{i,j} \right).
\end{align*}
For the purpose of building matrices for the linear operators, reindex $U$ as a column vector of length $N^2$.
Write $D_x, D_y$ for ($h$ times) the matrices which correspond to the finite difference operators above.   
Similarly, the convolution operator can be approximated by a convolution with a discrete kernel $K^h$.  Let $M_k$ represent the matrix of the convolution operator, also scaled by the factor $h$.   
{The matrices  $D_x, D_y, M_k$ each have $N^2$ columns.  They have different numbers of rows.  Denote the number of rows for each matrix by  {$m_1, m_2, m_3$}, respectively.
Let $F$ be the grid function which corresponds to a blurred and noisy vector and   represent $F$ as a column vector of length $m_3$.}
For simplicity, we use piecewise constant quadrature to approximate the integrals in the operator $F_3$, and, after dropping a factor of $h$, we are left with 
\[
F^h_3(U) = \| D_x U \|_1 + \| D_y U\|_1 + {\lambda}{h} \| M_k U - F \|_1,
\]
 a fully discrete convex function of the grid function $U$.

To formulate the equivalent   linear program, define the matrix and vector
\[
M =
\left[\begin{array}{c}D_x \\D_y \\ {M_k} \end{array}\right], \quad 
b = \left[\begin{array}{c}0_x \\0_x \\F\end{array}\right].
\]
Here $0_x, 0_y$ are column vectors of zeros of length $m_1, m_2$, respectively.
Using this definition, we can rewrite
\[
F^h_3(U) = \| M U - b \|_1.
\]
Next, changing notation slightly, (the following notation applies just to this section) we show how to recast the problem
\[
\min_{x \in \R^{N^2}} \| Mx - b\|_1
\] 
as a linear programming problem.   
Define the new variable $y^t = ({x^+}^t, {x^-}^t, x^t)$, where $x^+, x^-$ are column vectors in $\R^{N^2}$, and the vector $e^t = (1,\dots, 1)\in \R^{N^2}$.  
Then consider the equivalent problem
\[
\begin{aligned}
    \minimize{y \in \R^{3N^2}}~ &{e^t x^+ + e^t x^- }
  \\ \text{ subject to }  & 
  \begin{cases}    
  	  Mx - b = x^+ - x^- 
  		\\ 
		x^+, x^- \ge 0.
  \end{cases}
  \end{aligned}
\]
obtained by splitting $Mx-b$ into a positive and negative part and summing their 1-norms. This is a linear program: It involves the minimization of the linear function $(c^t y)$, for $c = (e^t, e^t, 0)$, subject to a linear equation, and non-negativity constraints on a subset of the variables.  

We performed the minimization using two convex optimization packages, \texttt{CVX}, and \texttt{MOSEK}.  Both are callable from {MATLAB}.  
{The first, \texttt{CVX}~\cite{CVX,gb08}, is a package for specifying and solving general convex optimization problems.
The second, \texttt{MOSEK}~\cite{mosek}, requires the user to reformulate each problem as a standard form optimization problem (in this case as a linear program), but it is able to solve larger problems.}
For the problems we presented,~\texttt{CVX} was able to solve the problem in a few seconds for $F_1, F_2$ and a few minutes for smaller instances of $F_3$, \texttt{MOSEK} was able to solve the small instances of $F_3$ in seconds, and the largest problems in a few minutes.

\section{Numerical Results}\label{sec:numerical}
We compared the performance of the functionals  $F_i$ applied to a number of 2D bar codes corrupted by some combination of noise and blurring.   To produce blurred and noisy images, convolution with blurring kernels of variable sizes  was followed by additive Gaussian noise. 
The objective was to test the conditions for which bar codes could be recovered and to compare the performance of the various functionals, not to optimize the numerical implementation. 
The bar code images had a resolution of either $8 \times 8$  or $12\times 12$ pixels per square of size $\omega\times\omega$.  That is, we choose $h = \omega / p$ where $p$ is the number of pixels per square. In all cases, we give values for  the dimensionless fidelity parameter  $\overline{\lambda}: = \lambda h$.

\subsection{Noisy Images}
We begin with a comparison of performance of the two functionals, $F_1$ and ${F_2}$ on noisy images.  
We show the same noisy bar code denoised with the two methods (\autoref{fig:noisy1}).   
In each run we used $\overline{\lambda} = 75$ for $F_1$ and $\overline{\lambda}= 2$ for $F_2$.
In contrast to $F_1$, minimization of  ${F_2}$ resulted in a binary image 
 (even without taking a level set\footnote{This is no surprise if we expect the minimizers of $\overline{F_1}$ to be unique. It can also be proven directly for minimizers of $F_2$ taking discrete values on a square grid.}).  
We found that the functional $F_2$ was robust under fairly large noise and indeed, 
the reconstruction is nearly perfect even in the presence of significant amounts of Gaussian noise.   The performance of $F_2$ is superior to $F_1$ as it retains both the shape and the binary character of the image. We note that simply thresholding the result of $F_1$ can introduce additional errors.
\begin{figure}
  \centering
  \subfloat[Noisy Image (a=.2)]{
  \hspace{-.5in}
  \includegraphics[width=0.3\textwidth]{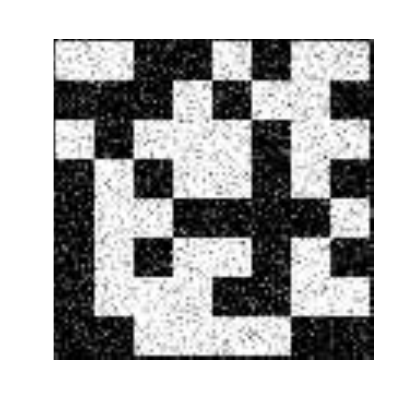}}                
  \subfloat[Result of $F_1$]{
  \includegraphics[width=0.3\textwidth]{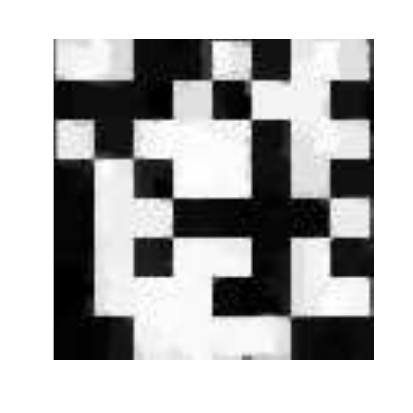}}
  \subfloat[Result of $F_2$]{
  \includegraphics[width=0.3\textwidth]{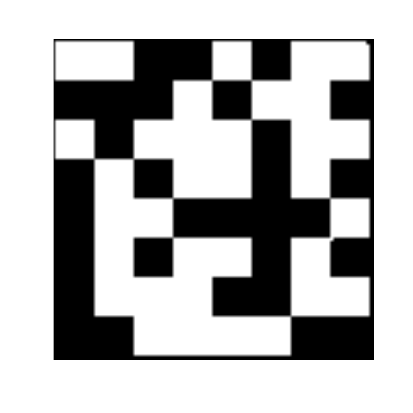}}
 
   \subfloat[Noisy Image (a=.35)]{
    \hspace{-.5in}   
  \includegraphics[width=0.3\textwidth]{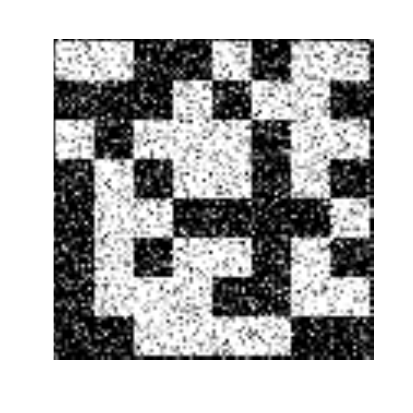}}                
  \subfloat[Result of $F_1$]{
  \includegraphics[width=0.3\textwidth]{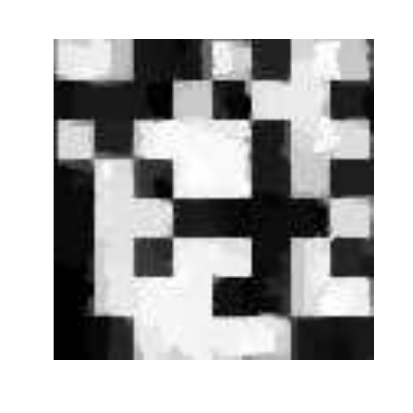}}
  \subfloat[Result of $F_2$]{
  \includegraphics[width=0.3\textwidth]{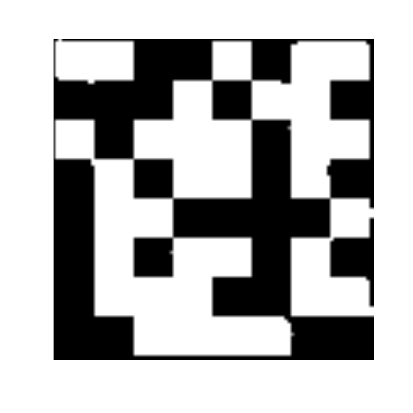}}

  \caption{Comparison of denoising using $F_1$ and $F_2$.  
 First row: Gaussian noise (amplitude  $a = .2$, signal to noise ratio 18.2 dB).  
 The reconstruction using $F_1$ is patchy. The reconstruction with $F_2$ is nearly perfect.
  Second row: Gaussian noise (amplitude $a=.35$, signal to noise ratio 7.0 dB).  
  The reconstruction using $F_1$ has deteriorated further.
  The reconstruction with $F_2$ is correct, except for a few switched pixels.
  }
\label{fig:noisy1}
\end{figure}

\subsection{Blurred Images}
The blurring kernel was chosen to be a piecewise linear hat function on a square.  
(In one dimension, for a given kernel radius $r$, the kernel is the normalization of the pixel function with values $(1,2,\dots r, \dots 2, 1)$.   The two-dimensional kernel is a Cartesian product of that one-dimensional kernel with itself.)
In each run, we used $\overline{\lambda} = 10$ for $F_3$.
The first result (\autoref{fig:blur1}) is for the bar code with a single square, blurred with a kernel whose radius is equal to the width of the square and with Gaussian noise added.  The reconstruction has close to the correct shape, but has lost some contrast.
The second result involves blurring without noise (Figure~\ref{fig:blurNoNoise}). 
The third result (\autoref{fig:blur2}) involves two different kernels.  
After thresholding, the recovered image is quite close to the original even in the presence of substantial noise. 
For the wider kernel there is some deterioration along diagonal patterns of squares.

\begin{figure}
  \centering
  \subfloat[Original]{
  \hspace{-.5in}
  \includegraphics[width=0.3\textwidth]{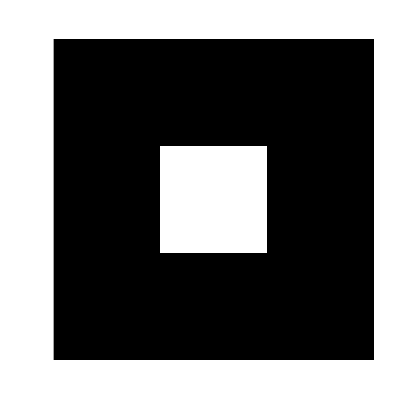}}                
  \subfloat[Blurred and Noisy]{
  \includegraphics[width=0.3\textwidth]{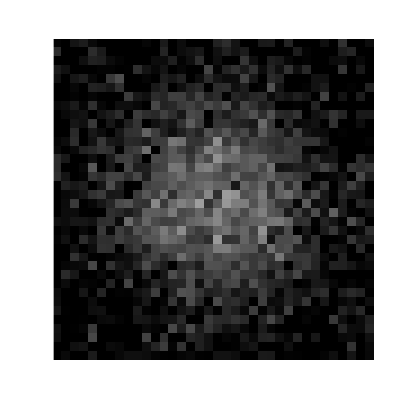}}
  \subfloat[result of $F_3$]{
  \includegraphics[width=0.3\textwidth]{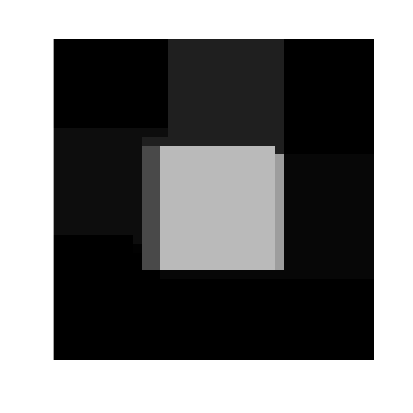}}
  \caption{Result of $F_3$ on a trivial bar code.  Each square is $12\times 12$ pixels, the blurring kernel radius is 12 pixels. Gaussian noise (amplitude .1, signal to noise ratio 23.0 dB).  The reconstruction loses some contrast.
}
\label{fig:blur1}
\end{figure}

\begin{figure}
  \centering
  \subfloat[Original]{
  \includegraphics[width=0.3\textwidth]{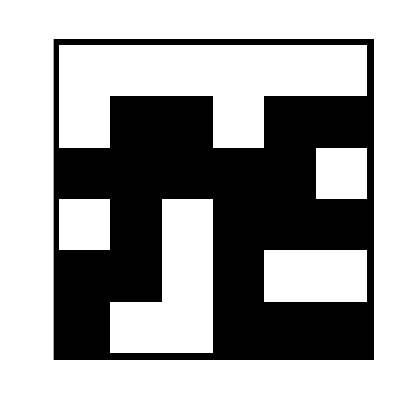}}                
  \subfloat[Blurred]{
  \includegraphics[width=0.3\textwidth]{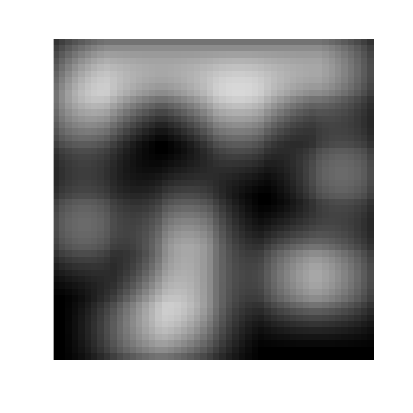}}
  
  \subfloat[$\overline{\lambda}=  1$]{
  \includegraphics[width=0.3\textwidth]{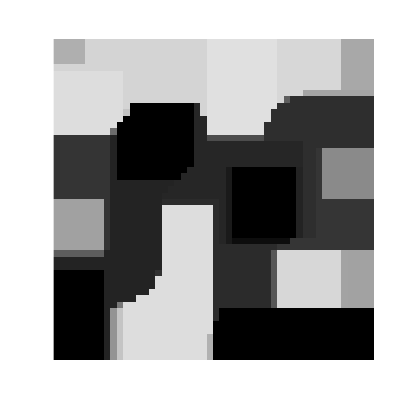}}
    \subfloat[$\overline{\lambda} = 4$]{
  \includegraphics[width=0.3\textwidth]{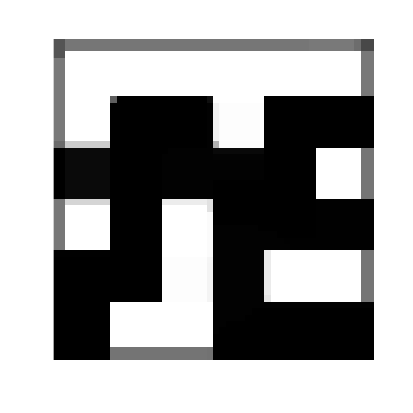}}
  \subfloat[$\overline{\lambda} = 8$]{
  \includegraphics[width=0.3\textwidth]{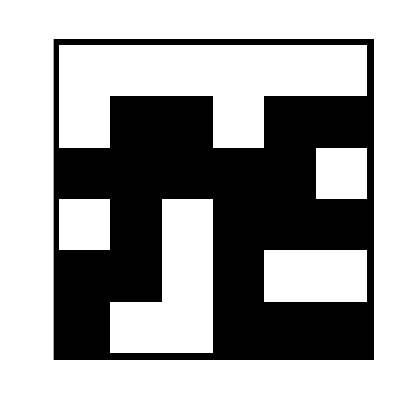}}

  \caption{Result of $F_3$ with a range of values of $\overline{\lambda}$ and no noise.  Each $\omega\times\omega$ square is $8\times 8$ pixels, the blurring kernel radius is 8 pixels.   }
\label{fig:blurNoNoise}
\end{figure}

\begin{figure}
  \subfloat[Original]{\hspace{-.5in}
  \includegraphics[width=0.25\textwidth]{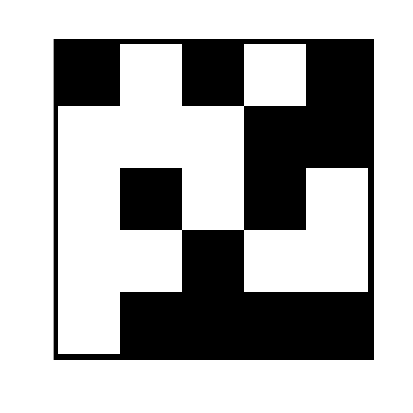}}                
  \subfloat[Blur r=8, Noise a =.02]{
  \includegraphics[width=0.25\textwidth]{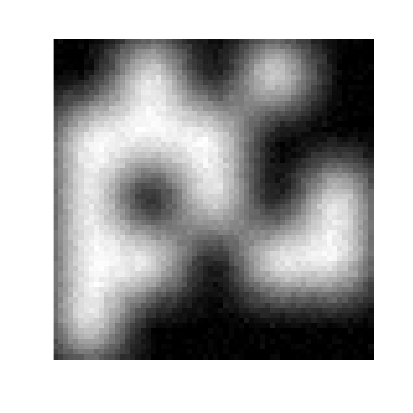}}
    \subfloat[result of $F_3$]{
   \includegraphics[width=0.25\textwidth]{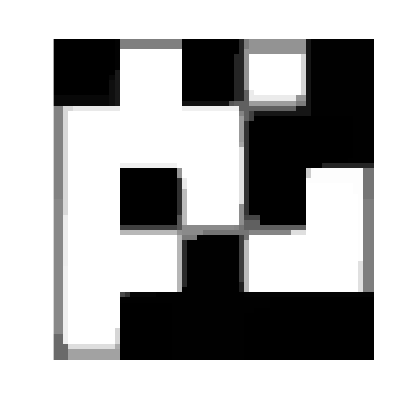}}
    \subfloat[result of $F_3$, thresholded]{
   \includegraphics[width=0.25\textwidth]{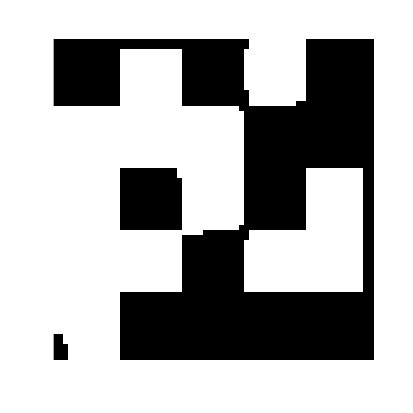}}


  \subfloat[Original]{\hspace{-.5in}
  \includegraphics[width=0.25\textwidth]{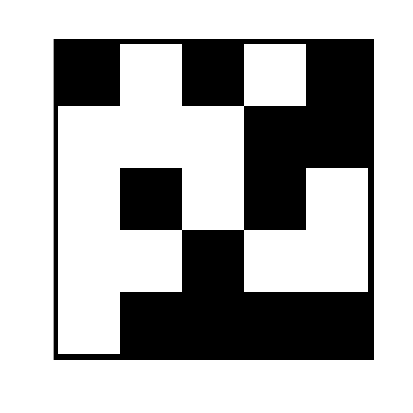}}                
  \subfloat[Blur r=8, Noise a=.2]{
  \includegraphics[width=0.25\textwidth]{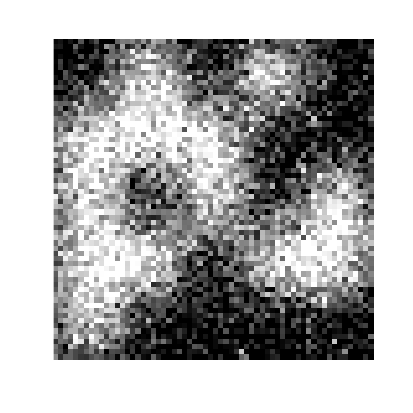}}
    \subfloat[result of $F_3$]{
   \includegraphics[width=0.25\textwidth]{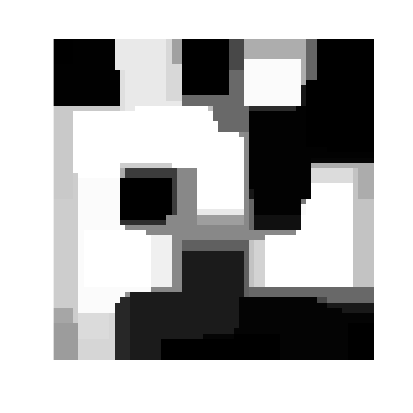}}
    \subfloat[result of $F_3$, thresholded]{
   \includegraphics[width=0.25\textwidth]{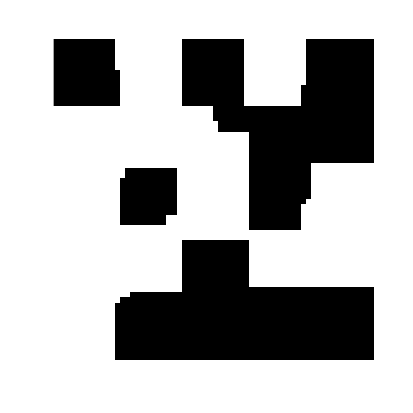}}

  \subfloat[Original]{\hspace{-.5in}
  \includegraphics[width=0.25\textwidth]{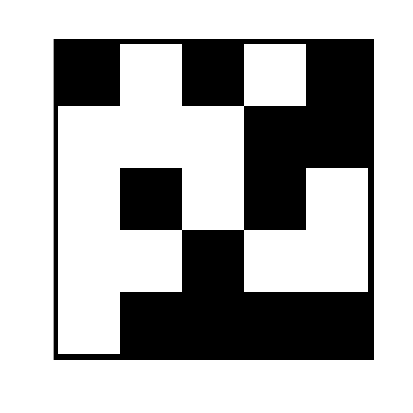}}                
  \subfloat[Blur r=12, Noise a=.2]{
  \includegraphics[width=0.25\textwidth]{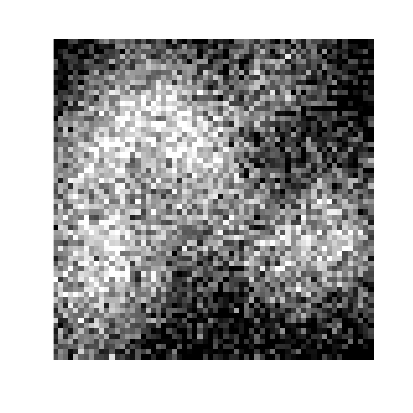}}
    \subfloat[result of $F_3$]{
   \includegraphics[width=0.25\textwidth]{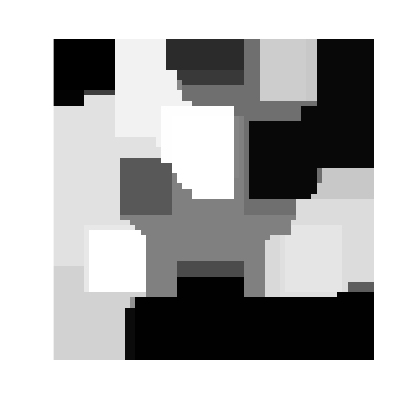}}
    \subfloat[result of $F_3$, thresholded]{
   \includegraphics[width=0.25\textwidth]{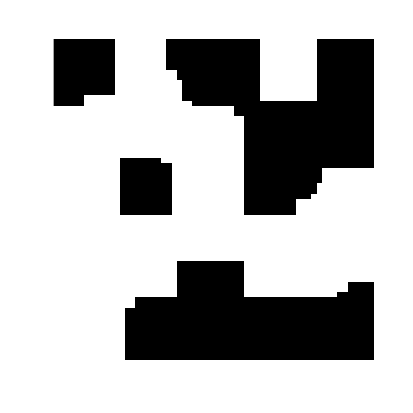}}
   \caption{Results of $F_3$. Each square in the bar code is $12\times 12$ pixels.  Gaussian noise amplitude $a$ is either .02 or .2. The blurring kernel radius $r$ is 8 or 12 pixels.  For the mildest example, the reconstruction is almost perfect.  Errors observed include deterioration along diagonal patterns of squares or incorrect placement of the boundaries of squares.
 However, note that even the last case is not that bad given that  the blurring kernel with $r=12$ has support spread over two unit squares.   }
\label{fig:blur2}
\end{figure}

One might ask how our values for $\overline\lambda$ compare with the results of our Theorems which basically state that if 
$\lambda \omega >4$, then the global minimizer is the underlying bar code (i.e. if $ \overline\lambda  > 4 / N$, where $N$ is the number of 
pixels per unit square). Comparison here is delicate because of the presence of noise. First off, there is an upper limit to acceptable values of $\overline\lambda$ as  large values will bring in too much unwanted fidelity to the noise. 
More importantly, in all cases we found the lower threshold value for $\overline\lambda$ to be very sensitive to noise:  for a given barcode signal,  even adding noise with amplitude $10^{-6}$ increased the threshold value.
Thus even in the simulations underlying Figure~\ref{fig:blurNoNoise} which involve no externally added noise, numerical round-off errors can account for enough noise to change the threshold for $\overline\lambda$. This explains the fact that the  threshold required for exact recovery was  larger than predicted by Theorem~\ref{barcode-conv}  (by a factor of 8).
A simulation with a narrow kernel, on a smaller image resulted in a critical value of $\overline\lambda$ which was off by (the smaller) factor of 1.4.  We expect that with wider kernels and larger problems, this value can be even more sensitive to noise.

In conclusion, in the presence of a known blurring kernel the functional $F_3$ followed by thresholding can recover close to the exact bar code for blurring kernels  with diameters about 1.5 times the $X$-dimension and small noise.   
Increasing the radius of the kernel or the amplitude of the noise leads to errors in the recovered image.   The types of errors included incorrect locations of boundaries of squares, or additional features along squares arranged in diagonal patterns. 
Not surprisingly, computations of blurred images (not presented) using $F_1$ or $F_2$ were less effective at recovering the bar code.

\section{Discussion}

We have presented three functionals for the $L^1$ approximation of signals with anisotropic total variation regularization and applied them to the denoising and deblurring of bar codes. Our analytical results show that the fidelity parameter $\lambda$ should be chosen above a certain threshold in order to not get the trivial minimizer. When comparing $F_1$ and $\overline{F_1}$ the analytical results in the absence of noise or blurring show that there is only a slight relaxation of the threshold for $\lambda$ to get the bar code back.  The numerical experiments clearly show that in practical situations with noise and blurring, $\overline{F_1}$ (or actually $F_2$) is preferred since it has binary output. 
If the blurring operator $K$ is known, it is even more advantageous to choose $F_3$. Our analysis shows that in the absence of noise, but with blurring present, we can recover the clean bar code by using $F_3$ regardless of the precise form of (the known) $K$. The numerical experiments also show the best results for $F_3$. 

There are several avenues for future work: 
\begin{itemize}

\item The convexification method we used for $\overline{F_1}$ is not applicable to the restriction of $F_3$ to $BV(\R^2; \{0, 1\})$. It would be valuable to see if there is another way to incorporate the binary restraint into $F_3$ apart from the \emph{a posteriori} thresholding as applied in the numerical experiments.

\item 
{\bf Blind deconvolution:} In practice, the blurring kernel underlying the measured signal $f$ may be unknown. 
Future analytical work could focus on  how well $F_3$ performs if the exact blurring operator is not known, and hence $K$ is only an approximation of the operator hidden in the measured signal $f$.  One possibility could be  to first try to determine certain statistics of the unknown convolution kernel (such that its standard deviation), and then  use $F_3$ with a $K$ consisting of convolution with a fixed kernel possessing the same statistics. 
Within a Gaussian ansatz, blind deconvolution was addressed variationally for 1D bar codes  in \cite{Esedoglu04} and a similar approach could also be adopted for 2D bar codes.

%

\item
The numerical experiments suggest that, for certain choices of $\lambda$, minimization of these functionals works well with both significant noise and blurring. In the case of non-noisy bar codes our theorems give sufficient thresholds for acceptable values of $\lambda$ but  the numerics show that these are sensitive to noise. 
It  would therefore be interesting to explore whether or not one can analyze the dependence of these thresholds on small perturbations of the signal $f$.

\item
{\bf Nonlocal total variation:}  A possible alternative regularization term instead of anisotropic total variation is anisotropic nonlocal total variation (cf. \cite{GilboaOsher07, GilboaOsher08})
\[
\int_{\R^2} \int_{\R^2} |u(x)-u(y)| \sqrt{w(x,y)}\,dx\,dy 
\]
for some well chosen weight function $w$. Nonlocal total variation does not restrict itself to local information, but compares patches from all over the image and hence is well suited to regularize images containing recurring structures, like bar codes. It would be interesting to see what analysis and simulations can tell us about the improvement this would be over the local anisotropic total variation.

\end{itemize}

\appendix

\section{Properties of the anisotropic total variation}\label{sec:anisoprops}

In this appendix we collect some properties  of our anisotropic total variation~\pref{eq:anisotv}, most of which follow from the analogous properties of the standard isotropic total variation which defines the space $BV(\R^2)$: $u \in L^1 (\R^2)$ is in the space $ BV (\R^2)$ iff 
\begin{equation}\label{TV}
\int_{\R^2} |\nabla u|  <  \infty \,\, {\rm where} \,\,\int_{\R^2} |\nabla u|  :=  \sup \left\{ \int_{\R^2} u \,\text{div} v : v\in C_c^1(\R^2; \R^2), \forall x, \, \|v \|_\infty \leq 1\right\}.
\end{equation}

We start by pointing out that the anisotropic total variation is an equivalent seminorm to the isotropic total variation defined above.
\begin{lemma}\label{lem:equivseminorm}
For $u\in BV(\R^2)$ we have
\[
\int_{\R^2} |\nabla u| \leq \int_{\R^2} |u_x| + |u_y| \leq \sqrt2 \int_{\R^2} |\nabla u|.
\]
\end{lemma}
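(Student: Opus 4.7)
The plan is to unpack both inequalities directly from the sup-definitions in \pref{eq:anisotv} and \pref{TV}, using only the elementary pointwise comparison between the $\ell^\infty$ and Euclidean norms on $\R^2$, namely
\[
|v|_\infty \;\le\; |v| \;\le\; \sqrt{2}\,|v|_\infty
\qquad\text{for every } v\in\R^2.
\]
No machinery beyond the definitions is needed.

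For the first inequality, I would observe that the pointwise bound $|v(x)|_\infty\le|v(x)|$ means that every test field admissible in the definition of $\int_{\R^2}|\nabla u|$ (those with $|v(x)|\le 1$ for all $x$) is automatically admissible in the anisotropic definition (since it also satisfies $|v(x)|_\infty\le 1$). Hence the sup defining $\int_{\R^2}|u_x|+|u_y|$ is taken over a \emph{larger} class of test fields, so it dominates $\int_{\R^2}|\nabla u|$.

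For the second inequality, I would take an arbitrary $v\in C_c^1(\R^2;\R^2)$ with $|v(x)|_\infty\le 1$ and consider the rescaled field $\tilde v := v/\sqrt{2}$. The pointwise bound $|v(x)|\le\sqrt{2}\,|v(x)|_\infty\le\sqrt{2}$ yields $|\tilde v(x)|\le 1$, so $\tilde v$ is admissible in the sup defining $\int_{\R^2}|\nabla u|$. Thus
\[
\int_{\R^2} u\,\mathrm{div}\, v \;=\; \sqrt{2}\int_{\R^2} u\,\mathrm{div}\,\tilde v \;\le\; \sqrt{2}\int_{\R^2}|\nabla u|.
\]
Taking the supremum over all such $v$ on the left gives the desired bound.

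There is no real obstacle here; the whole argument is just a pair of one-line comparisons of feasible test-field sets, exploiting the fact that the unit disk in $\R^2$ is contained in the unit square, which in turn is contained in the disk of radius $\sqrt{2}$. In particular, this also shows that $BV(\R^2)$ (defined via the isotropic total variation) coincides as a set with the space of $L^1$ functions of finite anisotropic total variation, which is what is tacitly used throughout the rest of the paper.
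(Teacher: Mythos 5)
Your proof is correct: both inequalities follow exactly as you say from the containments of the admissible test-field classes induced by $|v|_\infty\le|v|\le\sqrt{2}\,|v|_\infty$, together with the rescaling $\tilde v=v/\sqrt{2}$ (and you read the paper's notation correctly, since in \pref{TV} the constraint $\|v\|_\infty\le 1$ means $|v(x)|\le 1$ in the Euclidean norm for all $x$). The paper states Lemma~\ref{lem:equivseminorm} without proof, and your duality/test-field comparison is precisely the standard elementary argument it implicitly relies on, so nothing further is needed.
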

Lemma~\ref{lem:equivseminorm}  immediately gives the following  isoperimetric inequality (cf. \cite[5.6.1 Theorem 1 (i)]{EvansGariepy92}). 
\begin{lemma}\label{lem:isoperimetric}
There exists a constant $C>0$ such that for all $u\in BV(\R^2)$
\[
C \|u\|_{L^2(\R^2)} \leq \int_{\R^2} |u_x| + |u_y|.
\]
\end{lemma}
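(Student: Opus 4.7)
The plan is to reduce the claim to the classical Gagliardo--Nirenberg--Sobolev inequality for $BV$ functions, which in dimension $n=2$ yields the continuous embedding $BV(\R^2)\hookrightarrow L^{2}(\R^2)$, and then to trade the isotropic total variation appearing there for the anisotropic one by invoking the preceding Lemma~\ref{lem:equivseminorm}.

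More concretely, I would first quote the result of Evans--Gariepy cited in the statement: there exists a universal constant $C_1>0$ such that
\[
\|u\|_{L^{2}(\R^2)} \,\le\, C_1 \int_{\R^2} |\nabla u| \qquad \text{for all } u\in BV(\R^2),
\]
where the right-hand side is the isotropic total variation defined in~\pref{TV}. This is simply the $n=2$ case of \cite[5.6.1 Theorem 1 (i)]{EvansGariepy92}. Then I would apply the first inequality from Lemma~\ref{lem:equivseminorm},
\[
\int_{\R^2} |\nabla u| \,\le\, \int_{\R^2} |u_x|+|u_y|,
\]
and chain the two estimates to obtain
\[
\|u\|_{L^{2}(\R^2)} \,\le\, C_1 \int_{\R^2} |u_x|+|u_y|,
\]
so that $C:=1/C_1>0$ satisfies the desired inequality.

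There is essentially no obstacle here: the only substantive input is the BV-Sobolev embedding in two dimensions, which is precisely the isoperimetric-type inequality being referenced, and the passage from isotropic to anisotropic total variation is exactly the content of Lemma~\ref{lem:equivseminorm}. The proof is therefore a two-line concatenation of already-available results, and no regularization or approximation argument is required beyond what is already built into the cited embedding theorem.
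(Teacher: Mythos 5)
Your proposal is correct and is exactly the argument the paper intends: the authors state that Lemma~\ref{lem:equivseminorm} "immediately gives" the result via the Gagliardo--Nirenberg--Sobolev/isoperimetric inequality of \cite[5.6.1 Theorem 1 (i)]{EvansGariepy92}, which is precisely your two-step concatenation. No gaps; nothing further is needed.
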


The following approximation lemma allows us to replace $C_c^1$ functions with $L^\infty$ in the definition of our anisotropic total variation. 
\begin{lemma}\label{lem:approxv}
Let $v\in L^\infty(\R^2; \R^2)$ with $\dvg v \in L^\infty(\R^2)$ and $|v(x)|_\infty \leq 1$, then there exists a sequence $\{v_j\}_{j=1}^{\infty} \subset C_c^\infty(\R^2; \R^2)$ such that as $j\to\infty$, $v_j \overset{*}\rightharpoonup v$ in $L^\infty(\R^2; \R^2)$, and $\dvg v_j  \overset{*}\rightharpoonup \dvg v$ in $L^\infty(\R^2)$. That is,  for all $u\in L^1(\R^2; \R^2)$ and $w \in L^\infty(\R^2)$, as $j\to\infty$,
\[
\int_{\R^2} u\cdot v_j \to \int_{\R^2}u\cdot v \quad \text{and} \quad \int_{\R^2} w\, \dvg v_j \to \int_{\R^2} w\, \dvg v.
\]
In addition, for each $j$ and all $x\in\R^2$, $|v_j(x)|_\infty \leq 1$.
\end{lemma}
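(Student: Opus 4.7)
The plan is to combine truncation by a smooth cutoff with mollification by a standard positive mollifier, and then let both parameters tend to their limits simultaneously. Fix a nonnegative $\rho\in C_c^\infty(\R^2)$ with $\operatorname{supp}\rho\subset B(0,1)$ and $\int\rho=1$, and set $\rho_\e(x):=\e^{-2}\rho(x/\e)$. Choose cutoffs $\eta_j\in C_c^\infty(\R^2)$ with $0\le\eta_j\le 1$, $\eta_j\equiv 1$ on $B(0,j)$, $\operatorname{supp}\eta_j\subset B(0,j+1)$, and $|\nabla\eta_j|\le C$ independent of $j$. Then I would define $v_j:=\rho_{\e_j}*(\eta_j v)$ with, say, $\e_j=1/j$, so that $v_j\in C_c^\infty(\R^2;\R^2)$.

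The bound $|v_j(x)|_\infty\le 1$ is immediate: each component is a convolution of a function bounded by $1$ with a nonnegative kernel of unit mass, so $|(v_j)_i(x)|\le\int\rho_{\e_j}(x-y)|(\eta_jv)_i(y)|\,dy\le 1$. For the weak-$\ast$ convergence $v_j\overset{\ast}{\rightharpoonup} v$, I would pick an arbitrary $u\in L^1(\R^2;\R^2)$ and decompose
\[
v_j-v=\rho_{\e_j}*\bigl((\eta_j-1)v\bigr)+\bigl(\rho_{\e_j}*v-v\bigr).
\]
Pairing with $u$ and moving the convolution to $u$ via $\int u\cdot(\rho_\e*g)=\int(\tilde\rho_\e*u)\cdot g$ (with $\tilde\rho(x)=\rho(-x)$), the first contribution is bounded by $\|v\|_\infty\int_{B(0,j)^c}|\tilde\rho_{\e_j}*u|$, which tends to $0$ since $\tilde\rho_{\e_j}*u\to u$ in $L^1$ and $u\in L^1$; the second contribution tends to $0$ by the same mollifier approximation in $L^1$ against the bounded $v$.

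For the divergence I use the distributional identity $\dvg v_j=\rho_{\e_j}*\dvg(\eta_jv)=\rho_{\e_j}*(\eta_j\dvg v)+\rho_{\e_j}*(\nabla\eta_j\cdot v)$. The first term converges weakly-$\ast$ to $\dvg v$ in $L^\infty$ by repeating the above argument with $\dvg v\in L^\infty$ in place of $v$. The second term is the main technical point: its support (after convolution) lies in an annulus $B(0,j+2)\setminus B(0,j-1)$, and its $L^\infty$ norm is bounded uniformly by $C\|v\|_\infty$. Testing against any $w\in L^1(\R^2)$,
\[
\left|\int_{\R^2} w\,\rho_{\e_j}*(\nabla\eta_j\cdot v)\right|\le C\|v\|_\infty\!\int_{B(0,j-1)^c}|\tilde\rho_{\e_j}*w|\,,
\]
which vanishes as $j\to\infty$ by the same $L^1$ tail estimate plus mollifier approximation. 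This establishes both weak-$\ast$ convergences.

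The main obstacle I anticipate is precisely this annular term generated by the cutoff's gradient; it has compact support that marches to infinity while its pointwise size does not shrink, so the convergence must be extracted from the integrability of the test function $w\in L^1$ rather than from any pointwise decay of the integrand. Everything else is routine mollifier bookkeeping, and the simultaneous choice $\e_j\to 0$ is harmless because the mollifier approximation in $L^1$ is uniform with respect to the bound $\|v\|_\infty\le\sqrt 2$.
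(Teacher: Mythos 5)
Your construction (smooth cutoff equal to $1$ on $B(0,j)$ followed by mollification at scale $1/j$) is exactly the paper's proof, which sets $v_j(x) := j^2\big(\xi(x/j)v(x)\big)*\eta(jx)$ and leaves the verification as ``readily checked''; your detailed verification of the bound $|v_j|_\infty\le 1$, of both weak-$\ast$ limits, and of the cutoff-gradient term is correct. The only cosmetic difference is that the paper's cutoff $\xi(\cdot/j)$ has gradient of size $O(1/j)$, so its annulus term is small in the $L^\infty$ norm itself, whereas your $O(1)$-gradient cutoff handles that term (correctly) through the escaping support tested against $L^1$ functions.
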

\begin{proof}
The proof is very similar to  \cite[Proposition 3.3]{EsedogluOsher04}. Let $\xi \in C_c^\infty(\R^2)$ be a cutoff function satisfying $\xi \leq 1$, $\xi(x)=1$ if $|x|<1$ and $\xi(x)=0$ if $|x|>2$ and let $\eta\in C_c^\infty(\R^2)$ be the standard radially symmetric mollifier with $\eta \geq 0$ and $\int_{\R^2} \eta = 1$. Define $v_j\in C_c^\infty(\R^2; \R^2)$ via
\[
v_j(x) := j^2 \big( \xi (x/j) v(x)\big) * \eta(jx)
\]
where the convolution is componentwise. One can readily check that this new sequence satisfies all the desired bounds on $\|v_j\|_{L^\infty(\R^2; \R^2)}$, $\|\dvg v_j\|_{L^\infty(\R^2)}$, and $|v(x)|_\infty$.   


\end{proof}
\noindent An immediate corollary to Lemma \ref{lem:approxv} is 
\begin{corol}\label{cor:anisotvLinfty}
\begin{equation}\label{eq:anisotvLinfty}
\int_{\R^2} |u_x| + |u_y| = \sup \left\{ \int_{\R^2} u \,\text{div} \,  v : v\in L^\infty(\R^2; \R^2), \dvg v \in L^\infty(\R^2),  \, |v(x)|_\infty\leq 1 \, {\rm a.e.} \right\}
\end{equation}
\end{corol}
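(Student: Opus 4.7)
The plan is to prove both inequalities directly, using only the approximation result of Lemma~\ref{lem:approxv}. The key observation is that the $L^\infty$ test-vector-field class is strictly larger than the $C_c^1$ class used to define $\int |u_x|+|u_y|$, so one direction is immediate and the other reduces to a density argument.

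First I would show ``$\leq$'' in \eqref{eq:anisotvLinfty}. Every $v\in C_c^1(\R^2;\R^2)$ with $|v(x)|_\infty\le 1$ automatically lies in $L^\infty(\R^2;\R^2)$ with $\dvg v\in C_c^0(\R^2)\subset L^\infty(\R^2)$ and still satisfies $|v(x)|_\infty\le 1$ a.e. Hence the supremum defining $\int|u_x|+|u_y|$ is taken over a subset of the admissible class on the right-hand side of \eqref{eq:anisotvLinfty}, and the inequality follows at once.

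For the reverse inequality ``$\geq$'', fix any admissible $v$ on the right-hand side, i.e.\ $v\in L^\infty(\R^2;\R^2)$ with $\dvg v\in L^\infty(\R^2)$ and $|v(x)|_\infty\le 1$ a.e. Apply Lemma~\ref{lem:approxv} to obtain a sequence $\{v_j\}\subset C_c^\infty(\R^2;\R^2)$ with $|v_j(x)|_\infty\le 1$ for every $j$ and every $x$, such that $\dvg v_j\weakstarto \dvg v$ in $L^\infty(\R^2)$. Since $u\in BV(\R^2)\subset L^1(\R^2)$, $u$ is an admissible test function for this weak-$*$ convergence, so
\[
\int_{\R^2} u\,\dvg v \;=\; \lim_{j\to\infty}\int_{\R^2} u\,\dvg v_j.
\]
But each $v_j$ lies in the test class $C_c^1(\R^2;\R^2)$ with $|v_j(x)|_\infty\le 1$, so by definition \eqref{eq:anisotv} we have $\int_{\R^2} u\,\dvg v_j\le \int_{\R^2}|u_x|+|u_y|$ for every $j$. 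Passing to the limit and then taking the supremum over all admissible $v$ yields the desired bound.

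I don't anticipate any real obstacle here: the only non-trivial ingredient is the $C_c^\infty$ approximation together with the simultaneous weak-$*$ convergence of $\dvg v_j$, and that is precisely the content of Lemma~\ref{lem:approxv}. The rest is just pairing weak-$*$ limits against the fixed $L^1$ function $u$.
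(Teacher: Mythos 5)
Your proof is correct and is essentially the argument the paper intends: the ``$\leq$'' direction is inclusion of test classes, and the ``$\geq$'' direction is exactly the smoothing/weak-$*$ limit supplied by Lemma~\ref{lem:approxv}, which is why the paper labels the statement an immediate corollary of that lemma. (Your reading of the weak-$*$ convergence of $\dvg v_j$ as pairing against $u\in L^1$ is the standard one and matches how the paper itself uses the lemma.)
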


Four important properties of the standard isotropic total variation also hold for the anisotropic total variation: lower semicontinuity, 
approximation by smooth functions, the co-area formula, and smooth approximation to sets of finite perimeter.   The proofs follow those of the isotropic case  (cf. \cite[Theorems 1.9, 1.17]{Giusti84}, \cite[5.5 Theorem 1]{EvansGariepy92}, \cite[Theorem 3.42]{AmbrosioFuscoPallara00}) with the obvious modifications. 

\begin{lemma}\label{lem:TVlsc}
For every sequence $\{u_n\}_{n=1}^\infty \subset BV(\R^2)$ such that $u_n \to u$ in $L_{\text{loc}}^1(\R^2)$ as $n\to \infty$, for some $u\in BV(\R^2)$, we have
\[
\int_{\R^2} |u_x| + |u_y| \leq \underset{n\to \infty}{\lim\inf}\, \int_{\R^2} |u_{n_x}| + |u_{n_y}|.
\]
\end{lemma}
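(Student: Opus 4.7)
The plan is to exploit the dual definition~\pref{eq:anisotv} of the anisotropic total variation, since suprema of continuous linear functionals are automatically lower semicontinuous with respect to the topology making those functionals continuous. The compact support of the admissible test fields is what bridges $L^1_{\text{loc}}$ convergence to convergence of the relevant pairings.

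First I would fix an arbitrary test vector field $v\in C_c^1(\R^2;\R^2)$ with $|v(x)|_\infty\le 1$ for all $x$, and let $\Omega\subset\R^2$ be a bounded open set containing $\supp v$. Then $\dvg v\in C_c(\R^2)\subset L^\infty(\R^2)$ is also supported in $\Omega$, so
\[
\left|\int_{\R^2}(u_n-u)\,\dvg v\right| \le \|\dvg v\|_{L^\infty(\R^2)}\int_\Omega |u_n-u|\,dx \;\longrightarrow\; 0
\]
as $n\to\infty$, by the assumed $L^1_{\text{loc}}$ convergence. Hence $\int_{\R^2} u_n\,\dvg v \to \int_{\R^2} u\,\dvg v$.

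Next, for each $n$, the definition~\pref{eq:anisotv} directly yields
\[
\int_{\R^2} u_n\,\dvg v \le \int_{\R^2} |u_{n_x}|+|u_{n_y}|.
\]
Passing to the liminf on the right-hand side and using the limit identity from the previous step gives
\[
\int_{\R^2} u\,\dvg v \le \underset{n\to\infty}{\liminf}\int_{\R^2} |u_{n_x}|+|u_{n_y}|.
\]
Since this bound holds for every admissible $v$, taking the supremum over $v\in C_c^1(\R^2;\R^2)$ with $|v|_\infty\le 1$ on the left yields exactly $\int_{\R^2}|u_x|+|u_y|$ on the left, completing the proof.

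There is no real obstacle here; the only subtlety is the compact-support reduction that turns $L^1_{\text{loc}}$ convergence into convergence of the pairing with $\dvg v$, which is automatic because admissible $v$ in the definition~\pref{eq:anisotv} are compactly supported. One could equally well use the extended supremum from Corollary~\ref{cor:anisotvLinfty}, but restricting to $C_c^1$ test fields is cleaner because it avoids having to handle tails of $\dvg v$ at infinity.
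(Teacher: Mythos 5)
Your argument is correct and is precisely the standard lower semicontinuity proof for the (an)isotropic total variation, which is what the paper invokes: it gives no separate proof of Lemma~\ref{lem:TVlsc} but refers to the isotropic case (e.g.\ Giusti, Evans--Gariepy) ``with the obvious modifications,'' and those modifications are exactly your replacement of $\|v\|_\infty\le 1$ by $|v(x)|_\infty\le 1$ in the dual formulation~\pref{eq:anisotv}. The compact-support reduction you highlight is indeed the only point needing care, and you handle it correctly.
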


\begin{lemma}\label{lem:approximateBV}
Let $u \in BV(\R^2)$, then there exists a sequence $\{u_n\}_{n=1}^\infty \subset C^\infty(\R^2)$ such that $u_n \to u$ in $L^1(\R^2)$ if $n\to\infty$ and
\[
\underset{n\to\infty}\lim\, \int_{\R^2} |u_{n_x}| + |u_{n_y}| = \int_{\R^2} |u_x| + |u_y|.
\]
\end{lemma}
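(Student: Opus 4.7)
The plan is to follow the standard mollification argument, which is particularly clean on all of $\R^2$ since there is no boundary to worry about (so no partition of unity is required, in contrast to Giusti's proof on a bounded domain).

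First I would fix a standard radially symmetric mollifier $\rho \in C_c^\infty(\R^2)$ with $\rho \ge 0$, $\int_{\R^2} \rho = 1$, and set $\rho_\varepsilon(x) := \varepsilon^{-2}\rho(x/\varepsilon)$ and $u_n := u * \rho_{1/n}$. Then $u_n \in C^\infty(\R^2)$, and the standard fact that convolution with a mollifier converges in $L^1$ (using $u \in L^1(\R^2) \subset BV(\R^2)$) yields $u_n \to u$ in $L^1(\R^2)$. This delivers the convergence half of the statement.

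Next I would establish the upper bound
\[
\int_{\R^2} |u_{n_x}| + |u_{n_y}| \;\le\; \int_{\R^2} |u_x| + |u_y|.
\]
The key identity is that, in the sense of distributions, $\partial_i(u * \rho_{1/n}) = (\partial_i u) * \rho_{1/n}$, where on the right $\partial_i u$ is a finite Radon measure. Concretely, for each component $i \in \{1,2\}$ and each $x \in \R^2$,
\[
\bigl|\partial_i u_n(x)\bigr| \;=\; \Bigl|\int_{\R^2} \rho_{1/n}(x-y)\, d(\partial_i u)(y)\Bigr| \;\le\; \int_{\R^2} \rho_{1/n}(x-y)\, d|\partial_i u|(y),
\]
where $|\partial_i u|$ is the total variation measure. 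Integrating in $x$ and using Fubini together with $\int \rho_{1/n} = 1$ gives $\int_{\R^2}|\partial_i u_n|\,dx \le |\partial_i u|(\R^2)$, so summing over $i$ yields the desired upper bound. This is essentially the only nontrivial step and is where I expect the main (mild) technical obstacle: one must be careful that the distributional identity $\partial_i(u * \rho_{1/n}) = (\partial_i u) * \rho_{1/n}$ is applied with the measure-valued $\partial_i u$, and that the pointwise convolution against the measure is justified by Fubini since $\rho_{1/n}$ is nonnegative and bounded.

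Finally, I would combine the upper bound with the lower semicontinuity result (Lemma~\ref{lem:TVlsc}) applied to the $L^1$-convergent sequence $u_n \to u$:
\[
\int_{\R^2} |u_x| + |u_y| \;\le\; \liminf_{n\to\infty} \int_{\R^2} |u_{n_x}| + |u_{n_y}| \;\le\; \limsup_{n\to\infty} \int_{\R^2} |u_{n_x}| + |u_{n_y}| \;\le\; \int_{\R^2} |u_x| + |u_y|,
\]
forcing equality throughout and giving the claimed limit. The argument is entirely parallel to the isotropic proof, with the $\ell^\infty$ norm on the test field $v$ replaced componentwise in the definition of $\int|u_x|+|u_y|$, and nothing in the mollification estimate depends on which norm is used on $\R^2$.
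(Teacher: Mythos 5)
Your proof is correct and follows essentially the same route as the paper, which simply defers to the standard isotropic proofs (Giusti, Evans--Gariepy, Ambrosio--Fusco--Pallara) "with the obvious modifications" — namely mollification to get the upper bound $\int |u_{n_x}|+|u_{n_y}| \le \int |u_x|+|u_y|$, combined with the lower semicontinuity of Lemma~\ref{lem:TVlsc}. Your observation that on all of $\R^2$ no partition of unity is needed (since the anisotropic total variation equals $|\partial_1 u|(\R^2)+|\partial_2 u|(\R^2)$ and the componentwise estimate $|\partial_i u_n|\le \rho_{1/n}*|\partial_i u|$ integrates directly by Tonelli) is a legitimate simplification of those cited arguments.
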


\begin{lemma}\label{lem:coarea}
Let $f \in BV(\R^2)$ and define
\[
E(t) := \{x\in\R^2: f(x)>t\}.
\]
Then
\[
\int_{\R^2} |f_x| + |f_y| = \int_{-\infty}^\infty \left(\int_{\R^2} |\chi_{{E(t)}_x}| + |\chi_{{E(t)}_y}| \, \right) \, dt
\]
\end{lemma}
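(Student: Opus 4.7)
The strategy is to adapt the classical proof of the isotropic co-area formula (e.g.\ \cite[5.5 Theorem 1]{EvansGariepy92}) by exploiting the product structure of the anisotropic total variation: the two integrands $|f_x|$ and $|f_y|$ can be handled independently via a one-dimensional co-area formula in each coordinate direction. I will establish the two inequalities separately, denoting $\Phi_a(g) := \int_{\R^2} |g_x| + |g_y|$ for brevity.

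For the inequality $\Phi_a(f) \le \int_{-\infty}^\infty \Phi_a(\chi_{E(t)})\,dt$, fix $v \in C_c^1(\R^2;\R^2)$ with $|v(x)|_\infty \le 1$ and use the layer-cake representation
\[
f(x) = \int_0^\infty \chi_{E(t)}(x)\,dt - \int_{-\infty}^0 \bigl(1 - \chi_{E(t)}(x)\bigr)\,dt.
\]
Since $\int_{\R^2} \dvg v = 0$ for $v \in C_c^1$, the constant $1$ contributes nothing when pairing with $\dvg v$. Applying Fubini to exchange the $t$-integral with the space integral yields
\[
\int_{\R^2} f\, \dvg v = \int_{-\infty}^\infty \int_{\R^2} \chi_{E(t)}\, \dvg v \,dx\,dt \le \int_{-\infty}^\infty \Phi_a(\chi_{E(t)})\,dt,
\]
and taking the supremum over admissible $v$ gives the claim.

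For the reverse inequality, first treat the case $f \in C^\infty(\R^2) \cap BV(\R^2)$. By Fubini in the $x_1$-direction and the one-dimensional co-area formula applied to $f(\cdot, x_2)$ at each fixed $x_2$,
\[
\int_{\R^2} |f_x|\,dx = \int_\R \int_{-\infty}^\infty \#\{x_1 : f(x_1,x_2) = t\}\,dt\,dx_2,
\]
and exchanging integration order, the inner count equals $\int_{\R^2} |\chi_{E(t)_x}|$ (by another Fubini, since $|\chi_{E(t)_x}|$ restricted to a horizontal line is a sum of deltas at the crossings). The identical argument in the $x_2$-direction together yields $\Phi_a(f) = \int_{-\infty}^\infty \Phi_a(\chi_{E(t)})\,dt$ for smooth $f$.

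Finally, for general $f \in BV(\R^2)$, invoke Lemma~\ref{lem:approximateBV} to obtain $\{u_n\} \subset C^\infty(\R^2)$ with $u_n \to f$ in $L^1$ and $\Phi_a(u_n) \to \Phi_a(f)$; extract a subsequence converging pointwise a.e., so that for a.e.\ $t$ (those for which $|\{f = t\}| = 0$) we have $\chi_{\{u_n > t\}} \to \chi_{E(t)}$ in $L^1_{\mathrm{loc}}$. By the lower semicontinuity Lemma~\ref{lem:TVlsc} applied at each such $t$ and Fatou's lemma,
\[
\int_{-\infty}^\infty \Phi_a(\chi_{E(t)})\,dt \le \liminf_{n\to\infty} \int_{-\infty}^\infty \Phi_a(\chi_{\{u_n > t\}})\,dt = \liminf_{n\to\infty} \Phi_a(u_n) = \Phi_a(f),
\]
where the equality in the middle uses the smooth case already established. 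Combined with the first inequality, this gives the co-area identity. The main obstacle is the smooth case: one must verify that the fiberwise one-dimensional argument genuinely recovers $\int_{\R^2} |\chi_{E(t)_x}|$ (essentially an anisotropic substitute for Sard's theorem), but this reduces to an elementary Fubini calculation since the boundary crossings on a.e.\ horizontal line are finite for a.e.\ $t$.
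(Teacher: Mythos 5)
Your argument is correct and is essentially the proof the paper intends: the paper gives no details, deferring to the classical isotropic co-area proofs (Giusti; Evans--Gariepy 5.5; Ambrosio--Fusco--Pallara) ``with the obvious modifications,'' and your two-step scheme (duality plus layer-cake and Fubini for one inequality; smooth approximation, the smooth-case identity, lower semicontinuity and Fatou for the other) is exactly that classical scheme adapted to the anisotropic functional. The one point to be careful about is the smooth case, where identifying $\int_\R \#\{x_1: f(x_1,x_2)=t\}\,dx_2$ with $\int_{\R^2}|\chi_{{E(t)}_x}|$ for a.e.\ $t$ rests on the standard slicing characterization of the directional variations of a $BV$ function together with a one-dimensional Sard-type argument on a.e.\ horizontal line --- somewhat more than ``an elementary Fubini calculation,'' but entirely standard.
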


\begin{lemma}\label{lem:smoothapprox}
If $E$ is a set of finite perimeter in $\R^2$, then there exists a sequence $\{E_j\}_{j=1}^\infty$ of open sets with smooth boundaries converging in measure to $E$ and such that
\begin{align*}
\underset{j\to\infty}\lim\, \int_{\R^2} |\nabla \chi_{E_j}| = \int_{\R^2} |\nabla \chi_E| \quad \text{and}\\
\underset{j\to\infty}\lim\, \int_{\R^2} |\chi_{{E_j}_x}| + |\chi_{{E_j}_y}| = \int_{\R^2} |\chi_{E_x}| + |\chi_{E_y}|.
\end{align*}
\end{lemma}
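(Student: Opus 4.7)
The plan is a standard mollification-plus-coarea argument, adapted to handle the isotropic and anisotropic perimeters simultaneously.

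First, fix a nonnegative radially symmetric mollifier $\eta \in C_c^\infty(\R^2)$ with $\int_{\R^2} \eta = 1$ and set $\eta_\e(x) := \e^{-2} \eta(x/\e)$. Define the smooth approximations
\[
u_\e := \chi_E * \eta_\e \in C^\infty(\R^2), \qquad 0 \leq u_\e \leq 1.
\]
Then $u_\e \to \chi_E$ in $L^1_{\text{loc}}(\R^2)$ (and in $L^1(\R^2)$ if $E$ has finite measure, which we may reduce to since $E$ has finite perimeter). Standard Fubini/Young-type arguments show $\int_{\R^2} |\nabla u_\e| \leq \int_{\R^2} |\nabla \chi_E|$ and, componentwise, $\int_{\R^2} |u_{\e,x}| + |u_{\e,y}| \leq \int_{\R^2} |\chi_{E_x}| + |\chi_{E_y}|$. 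Combined with the lower semicontinuity of both functionals (the isotropic classical version and Lemma~\ref{lem:TVlsc}), we obtain
\[
\int_{\R^2} |\nabla u_\e| \to \int_{\R^2} |\nabla \chi_E|, \qquad \int_{\R^2} |u_{\e,x}| + |u_{\e,y}| \to \int_{\R^2} |\chi_{E_x}| + |\chi_{E_y}|.
\]

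Next, apply the coarea formula (Lemma~\ref{lem:coarea}, together with its standard isotropic counterpart) to the smooth functions $u_\e$. Writing $F_\e(t) := \{u_\e > t\}$, we get
\[
\int_{\R^2} |\nabla u_\e| = \int_0^1 \int_{\R^2} |\nabla \chi_{F_\e(t)}| \, dt, \qquad \int_{\R^2} |u_{\e,x}| + |u_{\e,y}| = \int_0^1 \left( \int_{\R^2} |\chi_{F_\e(t)_x}| + |\chi_{F_\e(t)_y}| \right) dt.
\]
By Sard's theorem, almost every $t \in (0,1)$ is a regular value of $u_\e$, so $F_\e(t)$ is then open with smooth boundary $\{u_\e = t\}$. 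Moreover, the layer cake identity gives
\[
\int_0^1 |F_\e(t) \triangle E| \, dt = \int_{\R^2} |u_\e - \chi_E| \, dx \to 0 \quad (\e \to 0).
\]

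The key step is now a joint level-set selection. For each $\e$, the set of "bad" $t \in (0,1)$ consists of: (a) the Lebesgue-null set of critical values; (b) those $t$ for which $|F_\e(t) \triangle E|$ exceeds $\sqrt{\|u_\e - \chi_E\|_{L^1}}$; (c) those $t$ for which $\int |\nabla \chi_{F_\e(t)}|$ exceeds $(1 + \e)\int |\nabla u_\e|$; (d) those $t$ for which the anisotropic perimeter of $F_\e(t)$ exceeds $(1 + \e)\int |u_{\e,x}|+|u_{\e,y}|$. By Chebyshev applied to the integral identities above, each of (b), (c), (d) has Lebesgue measure in $(0,1)$ going to zero with $\e$, so their union has measure less than $1$ for all small $\e$. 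Pick any good $t_\e$ outside this union and set $E_j := F_{\e_j}(t_{\e_j})$ for a sequence $\e_j \downarrow 0$. By construction each $E_j$ is open with smooth boundary, $|E_j \triangle E| \to 0$ (i.e.\ $E_j \to E$ in measure), and
\[
\limsup_{j} \int_{\R^2} |\nabla \chi_{E_j}| \leq \int_{\R^2} |\nabla \chi_E|, \qquad \limsup_{j} \int_{\R^2} |\chi_{{E_j}_x}| + |\chi_{{E_j}_y}| \leq \int_{\R^2} |\chi_{E_x}| + |\chi_{E_y}|.
\]
Since $\chi_{E_j} \to \chi_E$ in $L^1_{\text{loc}}(\R^2)$, the matching lower bounds follow from lower semicontinuity (Lemma~\ref{lem:TVlsc} and its classical isotropic analogue), yielding equality in both limits.

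The main obstacle is the simultaneous level selection: one must choose a single $t_\e$ that is a regular value, makes the symmetric difference small, and controls \emph{both} the isotropic and anisotropic perimeter of the level set. This is handled by the Chebyshev-plus-pigeonhole argument above, which exploits the fact that the three integrands in question have integrals over $(0,1)$ that either tend to zero or stay close to the target value; once each "exceptional" set has measure less than $1/4$, the intersection of their complements has positive measure and any element of it works.
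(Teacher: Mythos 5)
There is a genuine gap in the joint level-set selection, precisely at the step you flag as the main obstacle. For the bad sets (c) and (d), Chebyshev applied to the coarea identity $\int_0^1 \int_{\R^2}|\nabla \chi_{F_\varepsilon(t)}|\,dt = \int_{\R^2}|\nabla u_\varepsilon|$ only gives that the set of $t$ with $\int_{\R^2}|\nabla \chi_{F_\varepsilon(t)}| > (1+\varepsilon)\int_{\R^2}|\nabla u_\varepsilon|$ has measure at most $1/(1+\varepsilon)$ --- a bound tending to $1$, not to $0$. The same holds for (d), so the union of (c) and (d) may a priori have measure up to $2/(1+\varepsilon) > 1$ and the pigeonhole collapses; nothing you wrote excludes, say, the isotropic perimeter of $F_\varepsilon(t)$ exceeding its mean badly on half of the levels and the anisotropic one on the other half. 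Raising the Chebyshev threshold only yields smooth sets whose perimeters are bounded by a fixed multiple of the targets, which is not enough for the claimed equalities.

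The standard repair --- which is what the references the paper cites (Giusti; Ambrosio--Fusco--Pallara, Theorem 3.42) carry out, the paper itself only invoking them ``with the obvious modifications'' rather than writing a proof --- is available inside your own setup. First note that for every \emph{fixed} $t\in(0,1)$, $F_\varepsilon(t)\triangle E \subset \{|u_\varepsilon - \chi_E| > \min(t,1-t)\}$, so $|F_\varepsilon(t)\triangle E| \le \min(t,1-t)^{-1}\|u_\varepsilon-\chi_E\|_{L^1} \to 0$; hence by lower semicontinuity (classical and Lemma~\ref{lem:TVlsc}) both $\liminf_{\varepsilon\to 0}\int_{\R^2}|\nabla\chi_{F_\varepsilon(t)}| \ge \int_{\R^2}|\nabla\chi_E|$ and the analogous anisotropic bound hold for every $t\in(0,1)$. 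Combining these pointwise lower bounds with the convergence of the $t$-integrals (the negative parts of $\int_{\R^2}|\nabla\chi_{F_\varepsilon(t)}| - \int_{\R^2}|\nabla\chi_E|$ are bounded by the constant $\int_{\R^2}|\nabla\chi_E|$ and vanish pointwise, so dominated convergence applies, and likewise anisotropically), you conclude that $t\mapsto \int_{\R^2}|\nabla\chi_{F_\varepsilon(t)}|$ and $t\mapsto \int_{\R^2}|\chi_{F_\varepsilon(t)_x}|+|\chi_{F_\varepsilon(t)_y}|$ converge in $L^1(0,1)$ to the respective constants. Only after this upgrade does Chebyshev give bad sets of vanishing measure, and your selection of a regular value $t_\varepsilon$ then goes through (and in fact delivers full limits directly, making the final lower-semicontinuity step redundant). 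A minor additional point: finite perimeter does not imply $|E|<\infty$ (take the complement of a disk), so justify the reduction by noting that in the paper's application $\chi_E\in BV(\R^2)\subset L^1(\R^2)$, or pass to the complement.
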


\bigskip

\bigskip

{\bf Acknowledgments:} We thank Selim Esedo\=glu for many useful discussions  and suggestions. We are very grateful to Matthias R\"oger and Nils Dabrock for pointing out a mistake that was present in earlier versions of this paper (see Errata~\ref{err:onlyfaithful},~\ref{err:counterexample},~\ref{err:F3onlyfaithful}).
The research was supported by 
 NSERC (Canada) Discovery Grants. YvG was also supported by a PIMS postdoctoral fellowship at Simon Fraser University.

\bibliographystyle{acm}
\bibliography{bibliography}

\bigskip

\textbf{Electronic mail addresses of the authors:}
\begin{itemize}
\item Rustum Choksi: rchoksi@math.mcgill.ca
\item Yves van Gennip: yvgennip@math.ucla.edu
\item Adam Oberman: aoberman@math.sfu.ca
\end{itemize}

\end{document}